\documentclass[12pt]{article}

\usepackage[utf8]{inputenc}

\usepackage{amssymb}
\usepackage{latexsym}
\usepackage{exscale}

\usepackage[dvipsnames]{xcolor}

\usepackage{tikz}
\usetikzlibrary{trees}

\usepackage{amssymb, amsmath}
\usepackage{enumerate}
\usepackage{enumerate}
\usepackage[top=1in,left=1in,right=1in,bottom=1in]{geometry}
         \usepackage{ulem}
          \usepackage{xspace}
          \usepackage{cancel}

\usepackage{amssymb, amsmath}
\usepackage{enumerate}
\usepackage{enumerate}
\usepackage[top=1in,left=1in,right=1in,bottom=1in]{geometry}
\usepackage{ulem}
   \newcommand{\jg}[1]{{\color{Maroon}{#1}}}
   \newcommand{\jb}[1]{{\color{blue}{#1}}}

\usepackage{verbatim}

\def \beq{\begin{equation}}
\def \eeq{\end{equation}}

\newtheorem{remark}{Remark}

\renewcommand{\rq}[1]{(\ref{#1})}
\newtheorem{lemma}{Lemma}
\newtheorem{prop}{Proposition}
\newtheorem{thm}{Theorem}

\newcommand{\bR}{{ \mathbb R  }}

\newcommand{\bZ}{\Bbb Z}

\newcommand{\bN}{\mathbb{N}}
\newcommand{\bK}{\Bbb K}

\newcommand{\la}{\mbox{$\lambda$}}

\newcommand{\ra}{\mbox{$\mapsto $}}

\newcommand{\pa }{\partial }
\newcommand{\f}{\varphi}

\newcommand{\ep}{\epsilon}

\newcommand{\al}{\alpha }

\newcommand{\La}{\Lambda }

\newcommand{\om}{{\omega}}

\def\<{\langle} \def\>{\rangle}

\newcommand{\MM}{{\mathcal M}}

\newcommand{\A}{\tilde{A_D^{\alpha}}}
\newcommand{\AN}{\tilde{A_N^{\alpha}}}
\newcommand\CM[1]{\vskip3mm\fbox{\parbox{5in}{#1}}\vskip3mm}

\title{Null-controllability for the beam equation with
structural damping. Part 2: Integration by parts for fractional Laplacians and boundary control.}
\author{Sergei Avdonin, and  Julian Edward\footnote{corresponding author} }
\date{June 2026}
\begin{document}

\maketitle

Sergei Avdonin: Department of Mathematics and Statistics, University of Alaska Fairbanks, Fairbanks, 99775, Alaska, USA, saavdonin@alaska.edu
          
Julian Edward:  Department of Mathematics and Statistics, Florida International University, Miami, FLorida 33199, USA, edwardj@fiu.edu

\vspace{9in}

\maketitle
\begin{abstract}
Let $\Delta$ be the Neumann Laplacian on the interval $(0,\pi)$, and let $T>0$. An integration by parts formula is proven for the spectral fractional Laplacian, $(-\Delta)^\al$, for $\al \in (0,1)$. 
As an application, we prove well-posedness results for the structurally damped beam equation
$$u_{tt}+\Delta^2 u+\rho (-\Delta)^\al u_t=0, x\in (0,\pi),t>0$$
with various boundary conditions including
$$
u_x(0,t)=u_{xxx}(0,t)=0;\ u_x(\pi,t)=f(t),\ u_{xxx}(\pi,t)=0,
$$
and $f\in L^2(0,T)$
and appropriate initial conditions. 
Viewing $f$ as a control, we prove null-controllability.

Analagous results are proven for higher order controls, and for the Dirichlet Laplacian.
\end{abstract}

{\bf Key words:} Euler-Bernouilli beam, structural damping, fractional Laplacian, boundary null-controllability.

\section{Introduction}

Let $\Delta$ be the  Laplacian: $\Delta =\pa_x^2$ on the interval $(0,\pi)$. 
We will denote the Dirichlet Laplacian by $A_D=-\Delta$ 
with operator domain either $H^2(0,\pi)\cap H^1_0(0,\pi)$, 
and the Neumann Laplacian by $A_N=-\Delta$ with operator domain 
$\{ u\in H^2(0,\pi ): u'(0)=u'(\pi)=0\}.$
It is well known that $A_D,A_N$ are self-adjoint with non-negative spectra, and hence $A_D^{\alpha},A_N^\al$ are defined by the Spectral Theorem for all  $\alpha>0$. In what follows, we will denote by $A_j$
the operator $A_D$ or $A_N$ when there is no need for distinction. Denote by $X_j^p$ the operator domain of $A_j^{p/2}$.

We will study boundary control problems for the equation
\beq 
u_{tt}+A_j^2 u+\rho A_j^\al u_t=0,\ x\in (0,\pi), \ t>0, \ j= \mbox{D or N.}\label{beam0}
\eeq
Here $\al, \rho$ are positive constants.
The classical case, $\al =1$, was studied in \cite{AE}, but it's natural to also consider $\al\in [0,1).$
 Throughout this paper, controllability will always mean
the ability of steering any initial state $(u(x,0),u_t(x,0))$ to zero over a finite time by some appropriate
input function $f$ (i.e. exact controllability to zero or null controllability).
The boundary controls will be either 
\begin{eqnarray}
u(0,t)=u_{xx}(0,t) & = & 0,\nonumber \\
u(\pi,t) & = & f(t),\nonumber\\ 
u_{xx}(\pi,t) & = & g(t),\label{dir}
\end{eqnarray}
for the Dirichlet Laplacian, or
\begin{eqnarray}
u_x(0,t)=u_{xxx}(0,t) & = & 0,\nonumber\\
u_x(\pi,t) & = & f(t),\nonumber\\ 
u_{xxx}(\pi,t) & = & g(t),\label{neu}\\
\end{eqnarray}
for the Neumann Laplacian. 

The term $A_j^\al u_t$  models a specific dissipative effect, known as structural damping, when $\al \in (0,2)$. To the best of our knowledge, this was introduced in \cite{CR} assuming $\al =1$:
“The basic property of structural damping, which is said to be consistent with empirical studies,
is that the amplitudes of the normal modes of vibration are attenuated at rates which are proportional to the oscillation frequencies.” This model was also studied under the name “proportional
damping” (cf. \cite{Ba}). The quite different case $\al=2$ is known as “Kelvin–Voigt” damping. When
the right hand side of \rq{beam0} is replaced by $f(x,t)$ and $\al \in (0,2]$, this is the first class of parabolic-like control models considered
in \cite{LT}, see also \cite{AL}. 

For non-integer values of $\al$, and particularly for $f$ non-zero, the well-posedness of initial boundary value problems with either \rq{dir} or \rq{neu}  seems until recently to be unknown. A significant obstruction has been the lack of an integration by parts formula for $A_j^\al$, $j=$D or N, when applied to functions that don't satisfy homogeneous boundary conditions.  Recently, such a formula was proven for the Dirichlet Laplacian, \cite{E2} and \cite{AE1}. In this paper we prove such an integration by parts formula for $A_N^\al$. Perhaps partly  because of the lack of such a formula, many alternative, non-equivalent definitions of 
 $(-\Delta)^{\al}$  have been proposed, see \cite{G}, \cite{Di}, \cite{APR}, \cite{L} and references therein.

\subsection{Main Results}

In applications to boundary value problems, it is natural to consider extensions of  $A^{\al}$ to the case of non-homogeneous boundary conditions, and to use integration by parts  to define a weak solution. 

The extension we consider is based on the following formula, which can be found in \cite{SV} and was used for $A_D^\al$ in \cite{E2}, \cite{AE1}.
Let $p_j(t,x,y)$ be the heat kernel associated to the $A_j$ on $(0,\pi)$. 
The following representation for the Dirichlet Laplacian can be found in \cite{SV} for $\al <1$ and $w\in C_0^\infty(0,\pi)$: 
$$
A_D^{\al}w(x)=PV \int_{0}^{\pi}(w(x)-w(y))J_D(x,y)dy+ \kappa(x)w(x).
$$
Here PV stands for principle value, and 
\beq 
J_j(x,y)=\frac{\al}{\Gamma(1-\al)}\int_0^\infty p_j(t,x,y)t^{-\al -1}dt, \ \kappa(x) =\frac{\al}{\Gamma(1-\al)}\int_0^\infty \big (1-\int_0^\pi p_D(t,x,y)dy\big )t^{-\al -1}dt.\label{SV}
\eeq
For the Neumann Laplacian, the following analogue will be proven in the appendix: 
\beq 
A_N^{\al}w(x)=PV \int_{0}^{\pi}(w(x)-w(y))J_N(x,y)dy,
\label{SVN}
\eeq
with $J_N$ given by \rq{SV}. The proof of this result follows an argument used in \cite{AD} for the Dirichlet Laplacian, but also see \cite{CS}. We use \rq{SV} and \rq{SVN} to extend the fractional Laplacian to functions satisfying non-homogeneous boundary conditions: 

{\bf Definition}
 Suppose $\al \in (0,1).$ For $j=D$ or $N$,
the extended spectral fractional Laplacian, $\tilde{A}^\al_j$, is defined by \rq{SV} for $j=D$, and by \rq{SVN} for $j=N$.

\



For $\al \in (0,1)$, we set the operator domain of $\tilde{A}^\al_j$ as the set 
of measurable functions $w$ such that $\tilde{A}^\al_j w\in L^1(0,\pi )$.  For more on the properties of 
$\tilde{A}^\al_j$, the interested reader is refered to \cite{AD} and \cite{CS}.
It is shown in (\cite{AD}, Lemma 38) that for $\al \in (0,1), \ \phi \in X^\al_D$, we have
\beq
\tilde{A}^\al_D\phi =A^\al \phi.\label{aa}
\eeq

The following result was proven in \cite{E2}.
\begin{thm}\label{ibpd}

 Assume 
$$\al \in (0,1).$$
Let $u\in H^3(0,\pi) $.
Then $\A u\in C(0,\pi)\cap L^1(0,\pi),$
and
\beq
\int_{0}^{\pi}\A u(x)\ w(x)\ dx =\int_{0}^{\pi}u(x)\ A_D^{\al}w(x)\ dx, \ \forall w\in X^3_N.\label{symma2}
\eeq

\end{thm}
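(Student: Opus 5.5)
The plan is to work directly with the kernel representation \rq{SV}. I would use two facts: the Dirichlet heat kernel is symmetric, $p_D(t,x,y)=p_D(t,y,x)$, so $J_D(x,y)=J_D(y,x)$; and the multiplication term $\kappa(x)w(x)$ is trivially symmetric under pairing. The PV will be handled by truncation. For $\ep>0$ set
$$
\A_\ep u(x)=\int_{\{y\in(0,\pi):|x-y|>\ep\}}(u(x)-u(y))J_D(x,y)\,dy+\kappa(x)u(x).
$$
The first step is pointwise. For $u\in H^3(0,\pi)\subset C^2[0,\pi]$, I would show that $\A_\ep u(x)\to \A u(x)$ locally uniformly on $(0,\pi)$, and that $\A u$ is continuous. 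This uses the standard bounds
$$
J_D(x,y)\le C\min\Big(1,\frac{d(x)d(y)}{|x-y|^2}\Big)|x-y|^{-1-2\al},\qquad \kappa(x)\le C d(x)^{-2\al},
$$
with $d(x)=\min(x,\pi-x)$. The symmetric cancellation $u(x)-u(y)=-u'(x)(y-x)+O(|x-y|^2)$ makes the near-diagonal part converge for $\al<1$.

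The second step is a uniform domination: $|\A_\ep u(x)|\le C\|u\|_{H^3}\,d(x)^{-2\al}$ for $\ep\le d(x)/2$. Since $2\al<2$ but possibly $2\al\ge1$, this weight alone is not integrable against an arbitrary bounded $w$. So the pairing with $w$ must be done before taking limits, keeping track of cancellations between the $\kappa$ term and the kernel term near the boundary. For fixed $\ep$, all truncated double integrals are absolutely convergent, because off the diagonal $J_D$ is bounded by $C\ep^{-1-2\al}$. Fubini and the symmetry of $J_D$ then give the exact identity
$$
\int_0^\pi \A_\ep u\, w\,dx=\int_0^\pi u\,\A_\ep w\,dx,
$$
valid for any $u,w\in H^3(0,\pi)$, in particular for $w\in X^3_N$. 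Both sides equal $\int\!\int_{|x-y|>\ep}u(x)w(x)J_D\,dy\,dx-\int\!\int_{|x-y|>\ep}u(x)w(y)J_D\,dy\,dx+\int\kappa uw$.

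The third step is to let $\ep\to0$ and to identify the right-hand limit with $\int u\,A_D^\al w$. The class $X^3_N$ is not contained in $X^{2\al}_D$ once $2\al>1/2$, since $w(0),w(\pi)$ need not vanish. Therefore I cannot quote \rq{aa} directly. Instead I would interpret $A_D^\al w$ through its sine expansion $\sum n^{2\al}\hat w_n\sin nx$, with $\hat w_n=O(n^{-1})$ coming from the boundary values $w(0),w(\pi)$. I would split $w=w_0+\ell$, where $\ell$ is a fixed smooth function carrying the boundary values $w(0),w(\pi)$ and satisfying $\ell'=0$ at $0,\pi$, and $w_0\in X^3_N\cap X^{2\al}_D$. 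For $w_0$, \rq{aa} gives $\tilde A_D^\al w_0=A_D^\al w_0\in L^2$. For $\ell$, I would compute $\tilde A_D^\al\ell$ and its sine series explicitly and check that they agree as distributions paired against $u$. The sine series of $u\in H^3$ also decays, which makes $\sum n^{2\al}\hat u_n\hat\ell_n$ converge only conditionally, so care is needed here.

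The main obstacle is this boundary layer. I must show that $\int_0^\pi u\,\A_\ep w\to\int_0^\pi u\,A_D^\al w$ when $w$ does not vanish at the endpoints, where $\kappa w\sim d^{-2\al}$. What I would try first is to use the heat-kernel formula $A_D^\al=\frac{\al}{\Gamma(1-\al)}\int_0^\infty(I-e^{tA_D\cdot(-1)})t^{-\al-1}dt$ in weak form. Pairing with $u$ gives $\int_0^\infty\langle (I-e^{-tA_D})w,u\rangle t^{-\al-1}dt$. Expanding $\langle (I-e^{-tA_D})w,u\rangle$ via the kernel reproduces exactly the truncated symmetric expression, so that the two limits are literally the same integral in $t$. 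Dominated convergence in $t$ then needs only $|\langle (I-e^{-tA_D})w,u\rangle|\le C\min(1,t^{\beta})$ for some $\beta>\al$, and I would verify this using the $H^3$ regularity of both $u$ and $w$.
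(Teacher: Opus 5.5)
The paper does not prove this theorem; it is quoted from earlier work. The model is the proof of Theorem~\ref{ibpn}, which splits $J$ using the image sum for the heat kernel and passes to the limit term by term. Your Steps 1--2 and the truncated symmetry identity fit that pattern.

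Your Step 3, which you correctly identify as the crux, cannot be carried out. With $w\in X^3_N$ taken literally, the identity has no meaning for $\al\in[1/2,1)$. Take $u=w\equiv1$, which lies in $H^3(0,\pi)$ and in $X^3_N$. Since $u(x)-u(y)\equiv0$, we get $\A 1=\kappa\asymp d^{-2\al}$, so $\int_0^\pi \A 1\cdot 1\,dx=+\infty$. This already falsifies the assertion $\A u\in L^1$. On the spectral side, $\sum_n n^{2\al}\langle 1,\f_n\rangle^2=\frac{8}{\pi}\sum_{n\ \mathrm{odd}}n^{2\al-2}=+\infty$ as well.

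Correspondingly, $\langle (I-e^{-tA_D})w,u\rangle\sim c\,(u(0)w(0)+u(\pi)w(\pi))\sqrt t$ as $t\to0$. So the bound $C\min(1,t^\beta)$ with $\beta>\al$ that your dominated-convergence step needs fails for $\al\ge1/2$ as soon as $u(0)w(0)+u(\pi)w(\pi)\neq0$. Your heat-kernel argument works only for $\al<1/2$.

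There is also no cancellation between $\kappa$ and the kernel term to exploit. For $u\in C^2[0,\pi]$, the kernel part of $\A_\ep u$ is $O(1+d^{1-2\al}+|\log d|)$ uniformly in $\ep$. By contrast, $\kappa uw$ behaves like $u(0)w(0)\,x^{-2\al}$ near $x=0$ and does not depend on $\ep$ at all. So your ``exact identity for fixed $\ep$'' is already an identity between divergent integrals. Likewise, $\sum n^{2\al}\hat u_n\hat w_n$ diverges, rather than converging conditionally, whenever $u(0)w(0)+u(\pi)w(\pi)\neq0$.

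The intended hypothesis is $w\in X^3_D$. The subscript $N$ is a misprint: the right-hand side involves $A_D^\al$, and in Section 4 the theorem is applied with $w\in X^4_D$. The integrability claim should read $\A u\cdot w\in L^1$, as in the restatement of Theorem~\ref{ibpn} in Section 3. With $w(0)=w(\pi)=0$:
\begin{itemize}
\item the factor $|w|\le Cd$ absorbs the $d^{-2\al}$ singularity of $\kappa$;
\item both $\A_\ep u\cdot w$ and $u\,\A_\ep w$ are dominated by $C(1+d^{1-2\al}+|\log d|)\in L^1$;
\item dominated convergence passes to the limit on both sides of your truncated identity;
\item since $X^3_D\subset X^{2\al}_D$, \rq{aa} gives $\A w=A_D^\al w$.
\end{itemize}
Step 3 then becomes unnecessary.

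One further point in Steps 1--2. An upper bound on $J_D$ alone does not give the ``symmetric cancellation'' of the linear Taylor term, because $J_D(x,\cdot)$ is not even about $x$. You need that $J_D(x,y)$ minus a constant multiple of $|x-y|^{-1-2\al}$ is $O(d(x)^{-1-2\al})$ for $|x-y|<d(x)$. This follows from the image-sum formula for $p_D$: its singular reflected terms are $-|x+y|^{-1-2\al}$ and $-|x+y-2\pi|^{-1-2\al}$, and the remainder is continuous on $[0,\pi]^2$. This is exactly the splitting $\mathcal{J}^1+\mathcal{J}^2+\mathcal{J}^3$ used in the paper's Neumann proof.
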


\begin{remark}
The assumption  that $u,w\in H^3(0,\pi)$ guarantees that $u,w\in C^2([0,\pi])$, which appears to be needed in our proof.  
\end{remark}


One of the main results of this paper is an extension of Theorem \ref{ibpd} to the Neumann Laplacian.
\begin{thm}\label{ibpn}
\ 

 Assume $\al \in (0,1).$
Let $u\in H^3(0,\pi), \ w\in X_N^3 $
Then $\AN u\in C(0,\pi)\cap L^1(0,\pi)$, and
$$\int_{0}^{\pi}\AN u(x)\ w(x)\ dx =\int_{0}^{\pi}u(x)A^\al_N w(x)\ dx.
$$

\end{thm}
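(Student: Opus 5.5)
Our plan is to reduce to the case where the integration by parts is known, namely when $u$ satisfies the homogeneous Neumann conditions. We then treat the defect $u'(0),u'(\pi)\neq 0$ through an explicit correction whose contribution we can compute directly from the kernel $J_N$.

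\emph{Step 1 (explicit kernel).} We use the method of images for the Neumann heat kernel on $(0,\pi)$:
$$p_N(t,x,y)=\sum_{k\in\bZ}\big(G_t(x-y+2k\pi)+G_t(x+y+2k\pi)\big),\qquad G_t(z)=(4\pi t)^{-1/2}e^{-z^2/4t}.$$
Inserting this into \rq{SV} and integrating in $t$ term by term gives
$$J_N(x,y)=c_\al\sum_{k}\big(|x-y+2k\pi|^{-1-2\al}+|x+y+2k\pi|^{-1-2\al}\big),$$
which is symmetric in $x,y$ and positive. Its only singularities in $[0,\pi]^2$ are $|x-y|^{-1-2\al}$ on the diagonal, together with $|x+y|^{-1-2\al}$ and $|x+y-2\pi|^{-1-2\al}$ at the corners.

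\emph{Step 2 (regularity of $\AN u$).} For $u\in C^2([0,\pi])$, interior points are handled by Taylor expansion: $u(x)-u(y)=u'(x)(x-y)+O(|x-y|^2)$. The odd part cancels in the principal value, so $\AN u(x)$ is continuous on $(0,\pi)$. Near $x=0$ the reflected term $(x+y)^{-1-2\al}$ is paired with $u(x)-u(y)=O(|x-y|)$, and this yields $|\AN u(x)|\lesssim 1+x^{-2\al}$ when $u'(0)\ne0$ (a term like $u'(0)\int_0^\pi (x-y)(x+y)^{-1-2\al}dy$). Since $\al<1$ we get $x^{-2\al}\in L^1$ only if $2\al<1$. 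This is a warning sign, so we will instead pair the reflected contribution with the diagonal one: the combined kernel acting on the odd reflection gives a bound $x^{-2\al+1}\cdot x^{-1}$… Concretely, we will write $u=u_0+\ell$. Here $\ell(x)=a\cos x+b\cos 2x$ type, or better a smooth function with $\ell'(0)=u'(0)$ and $\ell'(\pi)=u'(\pi)$, chosen so that $u_0\in X_N^3$ up to $H^3$ regularity. Then $\AN u_0=A_N^\al u_0\in L^2$ by the Neumann analogue of \rq{aa}, which is proven exactly as in \cite{AD} using \rq{SVN}. The $L^1$ question is thereby reduced to the single explicit function $\ell$. We choose $\ell(x)=\frac{u'(0)}{2}x^2\chi_0+\dots$ localized near each endpoint and compute $\AN\ell$ near the endpoint via the image kernel, i.e.\ as the whole-line fractional Laplacian of the even extension of $\ell$. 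Because the even extension of $x^2$ is smooth, this is bounded. We expect that a linear term $x$ (even extension $|x|$) would have produced the non-integrable behaviour, so the choice of corrector matters.

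\emph{Step 3 (the identity).} For $u_0$ the identity is just self-adjointness of $A_N^\al$. For the corrector we use the even periodic extension to $2\pi$-periodic functions on $\bR$. There $\AN$ coincides with the periodic (Fourier) fractional Laplacian $|D|^{2\al}$ applied to the extension $E\ell$, since $J_N$ is the folded periodic kernel. Because $E\ell$ is $C^{1,1}$ and $Ew\in C^3$ (as $w\in X^3_N$ has $w'(0)=w'(\pi)=0$), Parseval on the torus gives $\int_{-\pi}^{\pi}|D|^{2\al}E\ell\,Ew=\int E\ell\,|D|^{2\al}Ew$. Halving this by evenness yields the claim, since $|D|^{2\al}Ew=E(A_N^\al w)$. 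Note that this argument actually applies directly to $u$: the even extension $Eu$ is Lipschitz with corners at $0,\pi$ of size $2u'(0)$, and $|D|^{2\al}Eu$ is then a distribution. The main obstacle is justifying that the pointwise PV formula \rq{SVN} equals the distributional $|D|^{2\al}Eu$, and that the latter lies in $L^1$ near the corners. For the justification we would truncate the PV at $\ep$, apply Fubini with the symmetric kernel to $\int\int_{|x-y|>\ep}(u(x)-u(y))J_N w(x)$, symmetrize to $\frac12\int\int_{|x-y|>\ep}(u(x)-u(y))(w(x)-w(y))J_N$, and then let $\ep\to0$ using dominated convergence. This is justified since $|(u(x)-u(y))(w(x)-w(y))|\lesssim|x-y|^2$, while the corner terms are integrable as $(x+y)^{-1-2\al}|x-y|^2$ is integrable. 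Performing the same symmetrization in the other order gives the right-hand side, which proves the identity.
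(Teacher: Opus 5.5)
There is a genuine gap, and it sits exactly where the theorem's content lies.

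Your closing symmetrization is correct as far as it goes. On $U_\ep$ the kernel $J_N$ is bounded, because $x+y\ge|x-y|\ge\ep$ and $2\pi-x-y\ge|x-y|$. Also $|u(x)-u(y)|\,|w(x)-w(y)|\,J_N(x,y)\lesssim |x-y|^{1-2\al}+(x+y)^{1-2\al}+(2\pi-x-y)^{1-2\al}+1$ is integrable on $S$. So, with $F_\ep(x)=\int_0^\pi(u(x)-u(y))J_\ep(x,y)\,dy$ and $G_\ep$ the same expression for $w$, you do get
\[
\lim_{\ep\to0}\int_0^\pi w F_\ep\,dx=\frac12\int_0^\pi\!\!\int_0^\pi (u(x)-u(y))(w(x)-w(y))J_N(x,y)\,dy\,dx=\lim_{\ep\to0}\int_0^\pi u\,G_\ep\,dx .
\]
But equality of the truncated integrals for each fixed $\ep$ is already immediate from $J_\ep(x,y)=J_\ep(y,x)$; that is the easy half. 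The theorem concerns $\int_0^\pi w\,\AN u\,dx=\int_0^\pi w\lim_{\ep}F_\ep\,dx$. What must be proved is that $\AN u\in L^1$ and that the principal-value limit commutes with the $x$-integration (and likewise for $G_\ep\to A_N^\al w$). In other words, no mass of $F_\ep$ may concentrate at the endpoints. Pointwise convergence of $F_\ep$ plus convergence of $\int wF_\ep$ does not give this. You need an integrable bound on $F_\ep$ uniform in $\ep$ (or $L^1$ convergence), and your proposal never supplies one.

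Step 2 was supposed to supply it, but the endpoint estimate there is off by one power. In fact $\int_0^\pi|x-y|(x+y)^{-1-2\al}dy\le\int_0^\pi(x+y)^{-2\al}dy\lesssim 1+x^{1-2\al}$ (with $1+|\log x|$ when $\al=1/2$), not $x^{-2\al}$, and $x^{1-2\al}\in L^1(0,\pi)$ for every $\al<1$. So there is no warning sign.

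The corrector detour is therefore unnecessary, and as written it is inconsistent. The choice $\ell=\frac{u'(0)}{2}x^2$ near $0$ has $\ell'(0)=0$, so $u-\ell\notin X_N^3$. Any admissible corrector must satisfy $\ell'(0)=u'(0)\ne0$, so its even extension necessarily has an $|x|$-type corner and $E\ell$ is not $C^{1,1}$. Step 3 then meets the same identification problem you label the ``main obstacle''. A corner of this type only produces the integrable singularity $x^{1-2\al}$ anyway.

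The missing ingredient is the computation the paper performs:
\begin{enumerate}
\item Taylor-expand $u(x)-u(y)=u'(x)(x-y)+O(|x-y|^2)$.
\item Split $J_N$ into the diagonal term $|x-y|^{-1-2\al}$, the corner images $|x+y|^{-1-2\al}$ and $|x+y-2\pi|^{-1-2\al}$, and a remainder continuous on $S$.
\item Evaluate explicitly the first moments $\int_0^\pi(x-y)\chi_{U_\ep}(x,y)|x\mp y|^{-1-2\al}dy$ and the analogue with $|x+y-2\pi|$.
\end{enumerate}
The diagonal moment is independent of $\ep$ for $\ep\le x\le\pi-\ep$ and is dominated by $C(1+x^{1-2\al}+(\pi-x)^{1-2\al})$ for all $x$ and $\ep$. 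The corner moments obey the same bound, and the quadratic remainders are uniformly bounded; logarithms replace powers when $\al=1/2$. This gives continuity on $(0,\pi)$, membership in $L^1$, and, by dominated convergence, the interchange of limit and integral. With that in hand, the symmetry of $J_\ep$ (or your symmetrization) together with \rq{SVN} for $w$ completes the proof.
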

Key to the analysis of $\tilde{A}^\al_N$ is the following formula for the heat kernel of the Neumann Laplacian:
\beq 
p_N(t,x,y)= \frac{1}{2\sqrt{\pi t}}\sum_{n=-\infty}^\infty 
(e^{-(x-y+2n\pi)^2/(4t)}+e^{-(x+y+2n\pi)^2/(4t)}),\label{pN}
\eeq 
which will allow us to decompose $J_N$ as an infinite sum of explicit terms. We suspect this formula is known in some communities, but we could not find it in the literature. The counterpart for the Dirichlet Laplacian, used in \cite{E2}, appears in \cite{BST}.

Using Theorem \ref{ibpd}, in \cite{E2} we 
 defined a weak solution to, and discussed well-posedness for,  the initial boundary value problem (IBVP)
\begin{eqnarray}
u_{tt}+A_D^2 u+\rho \A u_t& = & 0\label{beam}\\
u(0,t)=u_{xx}(0,t) & = & 0\label{bc1}\\
u(\pi,t) & = & f(t)\label{bc2}\\ 
u_{xx}(\pi,t) & = & g(t)\label{bc3}\\
u(x,0)=u^0(x),\ u_t(x,0)& = & u^1(x),\label{init}
\end{eqnarray}
with a positive constant $\rho$, and  $\al\in [0,1)$.

We now consider the Neumann Laplacian. 
Using Theorem \ref{ibpn}, we define a weak solution to the following IBVP:
\begin{eqnarray}
u_{tt}+A_N^2 u+\rho \AN u_t& = & 0,\label{beamn2}\\
u_x(0,t)=u_{xxx}(0,t) & = & 0,\label{bc1n2}\\
u_x(\pi,t) & = & f(t),\label{bc2n2}\\ 
u_{xxx}(\pi,t) & = & g(t),\label{bc3n2}\\
u(x,0)=u^0(x),\ u_t(x,0)& = & u^1(x).\label{initn2}
\end{eqnarray}

\begin{thm}\label{wellposedN}
Let $T>0$. Suppose $u^0=u^1=0$.

A) (Neumann control)
Let $g=0$, and  $f\in L^2(0,T).$  Let $\al \in (0,1).$
Then there exists a unique 
 solution $u^f$ to the \rq{beamn2}-\rq{initn2}, and the 
 mapping $f\mapsto u^f$ is continuous from $L^2(0,T)$ to $C(0,T;X^0_N)\cap C^1(0,T;X^{-2}_N).$

B) (Shear control)
Let $f=0$, and  $g\in L^2(0,T).$ 
 Suppose $\al \in (0,1]$. Then there exists a unique 
 solution $u^g$ to the system \rq{beamn2}-\rq{initn2}, and  the 
 mapping $g\mapsto u^g$ is  continuous from $L^2(0,T)$ to $C(0,T;X^{2}_N)\cap C^1(0,T;X^{0}_N).$

\end{thm}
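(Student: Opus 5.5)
We will define a weak solution by testing the equation against $w\in X^3_N$ (or against smooth-in-time solutions of the adjoint problem) and use Theorem \ref{ibpn} to move $\AN$ onto the test function. Formally, if $u$ is smooth, multiplying \rq{beamn2} by $w\in X^4_N$ and integrating by parts twice in the $A_N^2$ term (using $w'(0)=w'(\pi)=0$, $w'''(0)=w'''(\pi)=0$ and \rq{bc1n2}--\rq{bc3n2}) gives
$$
\frac{d^2}{dt^2}\langle u,w\rangle+\langle u,A_N^2w\rangle+\rho\frac{d}{dt}\langle u,A_N^\al w\rangle = -f(t)\,w''(\pi)+g(t)\,w(\pi),
$$
up to signs. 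This is the weak formulation we adopt. Expanding in the Neumann eigenbasis $\varphi_n(x)=c_n\cos(nx)$, $\lambda_n=n^2$, and writing $u=\sum u_n(t)\varphi_n$, it reduces to the decoupled ODEs
$$
u_n''+\rho n^{2\al}u_n'+n^4u_n=-f(t)\varphi_n''(\pi)+g(t)\varphi_n(\pi),\qquad u_n(0)=u_n'(0)=0 .
$$
Here $\varphi_n''(\pi)=-n^2c_n(-1)^n$ and $\varphi_n(\pi)=c_n(-1)^n$. Uniqueness follows immediately, since each $u_n$ is determined. Existence amounts to showing that the series defines an element of the stated spaces, depending continuously on the data.

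Next we solve each ODE by Duhamel's formula. The characteristic roots are
$$
\mu_n^\pm=\frac{-\rho n^{2\al}\pm\sqrt{\rho^2n^{4\al}-4n^4}}{2}.
$$
For $\al<1$ and $n$ large these are complex, with $\mathrm{Re}\,\mu_n^\pm=-\rho n^{2\al}/2$ and $\mathrm{Im}\,\mu_n^\pm\approx\pm n^2$. For $\al=1$ they are both real or both complex, each of modulus about $n^2$ with real part at least of order $n^2$ in magnitude. In every case we have $u_n(t)=b_n\int_0^t k_n(t-s)h(s)\,ds$ with $k_n(t)=(e^{\mu_n^+t}-e^{\mu_n^-t})/(\mu_n^+-\mu_n^-)$. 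The finitely many degenerate or small $n$ are handled trivially.

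For (A) we have $b_n\sim n^2$, and we need $\sum|u_n(t)|^2<\infty$ and $\sum n^{-4}|u_n'(t)|^2<\infty$, uniformly in $t$ and continuous in $t$. Since $|\mu_n^+-\mu_n^-|\sim n^2$, this requires
$$
\sum_n\Big|\int_0^t e^{\mu_n^\pm(t-s)}f(s)\,ds\Big|^2\le C\|f\|^2_{L^2(0,T)},
$$
and the analogous bound for the derivative, which produces the same exponential sum after dividing by $n^2$. This is the main obstacle. A termwise Cauchy--Schwarz estimate gives only $\|f\|^2\sum_n(\rho n^{2\al})^{-1}$, which diverges for $\al\le1/2$, so we must exploit oscillation. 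The family $\{e^{\mu_n^\pm s}\}$ has exponents with $\mathrm{Im}\,\mu_n\approx\pm n^2$, uniformly separated, and real parts bounded above. We will therefore prove a Bessel-type inequality: for exponents with uniformly separated imaginary parts and $\mathrm{Re}\,\mu\le0$, the map $f\mapsto\big(\int_0^t e^{\mu_n(t-s)}f(s)\,ds\big)_n$ is bounded $L^2(0,T)\to\ell^2$ uniformly in $t$. We would try Ingham's inequality in its dual (Bessel) form, applied after extending $f$ by zero. Alternatively we use a Carleson-measure criterion: $\{e^{\mu_n s}\}$ is a Bessel family on $(0,\infty)$ iff the points $-\mu_n$ in the right half-plane form a Carleson measure, which holds since boxes of size $h$ contain $O(1+\sqrt h)$ points. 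Continuity in $t$ then follows from dominated convergence on tails, approximating $f$ by smooth functions.

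For (B) we have $b_n\sim1$, and we need $\sum n^4|u_n|^2+\sum|u_n'|^2<\infty$. The extra factor $n^2$ relative to (A) is exactly compensated by $b_n$ being $n^{-2}$ smaller, so the same Bessel estimate applies. For $\al=1$ the real parts are about $-cn^2$, and the Carleson condition is again verified: the points lie within a cone, and a box of size $h$ near the boundary contains $O(\sqrt h)$ points. Finally, one checks that the series solution satisfies the weak formulation; this uses Theorem \ref{ibpn} to identify $\langle u,A_N^\al w\rangle$ with $\langle\AN u,w\rangle$ for smooth approximants.
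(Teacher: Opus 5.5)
Your proof is correct in substance, but it takes a different route from the paper's.

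\textbf{The paper's route.} The paper argues by transposition. It \emph{defines} the solution through the identities \rq{weakN1} and \rq{weaks}, tested against solutions of the backward adjoint problem \rq{dbeamn}--\rq{dinitn}. It then obtains $(u(T),u_t(T))$ by Riesz duality from the functional $(w^0,w^1)\mapsto\int_0^T w_{xx}(\pi,t)f(t)\,dt$ on $X^2_N\times X^0_N$ (resp.\ $\int_0^T w(\pi,t)g(t)\,dt$ on $X^0_N\times X^{-2}_N$). This functional is bounded by the admissibility estimate of Lemma~\ref{adjnreg}\,C (resp.\ the lemma bounding $\int_0^T|w(\pi,t)|^2dt$), which rests on the Bessel property of Lemma~\ref{L2}. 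Continuity in $t$ is left to Fourier series.

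\textbf{Your route.} You test against time-independent $w\in X^4_N$, obtain decoupled modal ODEs, and solve them by Duhamel. Your key bound, uniform-in-$t$ boundedness of $f\mapsto\bigl(\int_0^t e^{\mu_n(t-s)}f(s)\,ds\bigr)_n$ from $L^2(0,T)$ to $\ell^2$, is exactly the dual form of that same Bessel property. The two notions of solution agree: pair $u_n$ with an adjoint mode $w_n(t)\varphi_n$ and differentiate $w_n'u_n-w_nu_n'-\rho n^{2\al}w_nu_n$.

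\textbf{Comparison.} Your approach gives uniqueness for free and actually proves the time continuity the paper defers. It also explains the shift by two in the $X^p_N$ scale between (A) and (B) through $b_n\sim n^2$ versus $b_n\sim 1$. The paper's approach needs only the admissibility estimate, and it works directly with the observation operators $w_{xx}(\pi,t)$ and $w(\pi,t)$ that are reused for null-controllability in Section~\ref{five}.

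\textbf{Remark on the Bessel estimate.} You are right that termwise Cauchy--Schwarz gives only $\|f\|^2\sum_n(\rho n^{2\al})^{-1}$, and this bears on the paper too. By Parseval, the Cauchy--Schwarz step in the appendix proof of Lemma~\ref{L2} is just $|\langle f,e^{t\la_n^+}\rangle|\le\|f\|\,\|e^{t\la_n^+}\|_{L^2(0,T)}$, with $\|e^{t\la_n^+}\|^2_{L^2(0,T)}\sim(\rho n^{2\al})^{-1}$. The bound by $\sum_m((2\pi m/T)^2+n^4)^{-1}$ written there fails for $2\pi m/T$ near $-\Im\la_n^+$. So that argument closes only for $\al>1/2$, whereas your Carleson-measure argument covers all $\al\in(0,1)$. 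You should add one point: $\Re(-\mu_n)\ge\delta>0$ for $n\ge1$, so boxes of side $h<\delta$ are empty, and the count $O(1+\sqrt h)\le Ch$ is needed only for $h\ge\delta$.

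\textbf{A slip in your case analysis.} In (B) with $\al=1$ and $\rho=2$, \emph{every} mode has the double root $-n^2$. So your $k_n$ is undefined for all $n$, not just finitely many. Use $k_n(t)=te^{-n^2t}$ instead. Since $\|e^{-n^2t}\|^2_{L^2(0,\infty)}$ and $\|n^2te^{-n^2t}\|^2_{L^2(0,\infty)}$ are both $O(n^{-2})$, plain termwise Cauchy--Schwarz suffices there. In fact, for $\al=1$ it suffices for every $\rho$, because $\Re\mu_n^\pm\le-cn^2$. The paper's formulas \rq{cn+1}--\rq{cn-1}, which divide by $q_n=0$ in this case, have the same omission.
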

Part A of this theorem is notably different from what holds classical structural damping, $\al =1$, which was studied in \cite{AE}, also see \cite{T1}. In that case, a continuous beam velocity requires a Neumann control more regular than $L^2$. The same distinction holds for Dirichlet control. This, and other differences with the classical case, will be discussed in the next subsection.

Having established well-posedness, it is not hard to prove certain null-control results. We state two here, and others in Section \ref{five}. 
\begin{thm}\label{bcD}
(Dirichlet null-controllability)

\ 

 Assume $\al\in (0,1)$ and $\rho < 2.$
Let $T>0.$ 
Given $(u^0,u^1)\in X^{-1}_D(0,\pi)\times X^{-3}_D(0,\pi)$, there exists $f\in L^2(0,T)$ such that the solution $u^f$ to the system  \rq{beam}-\rq{init}, with $g=0$, solves
$$u(x,T)=u_t(x,T)=0,
$$
and there exists a constant $C$ depending only on $\al,\rho ,T$ such that
$$\| f\|_{L^2(0,T)}\leq C(\| u_0\|_{X^{-3}_D}+\|u_1\|_{X^{-1}_D}).$$
\end{thm}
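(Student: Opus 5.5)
We use the moment method, reducing null-controllability to a moment problem for exponentials and solving it with a biorthogonal family. Let $\phi_n(x)=\sqrt{2/\pi}\sin(nx)$, $n\ge1$, be the eigenfunctions of $A_D$, with eigenvalues $n^2$.

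\textbf{Step 1: spectral reduction.} Take the weak formulation from \cite{E2}, built on Theorem \ref{ibpd}, with test functions $w(x,t)=\phi_n(x)\theta(t)$, where $\theta$ solves the adjoint ODE. Writing $u=\sum u_n(t)\phi_n$, each coefficient satisfies
$$u_n''+n^4u_n+\rho n^{2\al}u_n'=c_n f(t),$$
where $c_n$ comes from the boundary terms of the bilaplacian and of $\A$ at $x=\pi$. We have $c_n\sim \phi_n'''(\pi)\sim n^3$ up to a constant and sign, with lower-order corrections from the fractional boundary term in Theorem \ref{ibpd}. The characteristic roots are
$$\lambda_n^\pm=\frac{-\rho n^{2\al}\pm\sqrt{\rho^2n^{4\al}-4n^4}}{2}.$$
Since $\al<1$, these are eventually complex: $\lambda_n^\pm=-\tfrac{\rho}{2}n^{2\al}\pm i n^2\sqrt{1-\rho^2n^{4\al-4}/4}$. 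The hypothesis $\rho<2$ makes the discriminant negative for every $n\ge1$, because $\rho^2n^{4\al}<4n^4$. Hence all roots are non-real and distinct, with no double roots and no collision between the $+$ and $-$ branches. The conditions $u(T)=u_t(T)=0$ then become the moment problem
$$\int_0^T f(t)e^{-\lambda_n^\pm (T-t)}\,dt = d_n^\pm,\qquad |d_n^\pm|\lesssim \frac{|u^0_n|\,|\lambda_n|+|u^1_n|}{|c_n|\,|\lambda_n^+-\lambda_n^-|}.$$
Here the $d_n^\pm$ are computed from the initial data via Duhamel's formula.

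\textbf{Step 2: biorthogonal family.} The exponents $\mu_n^\pm=-\lambda_n^\pm$ satisfy $\Re\mu_n\asymp n^{2\al}$ and $|\Im\mu_n|\asymp n^2$. They are separated, and $\sum 1/|\mu_n|<\infty$. The central task is to build a family $\{q_n^\pm\}$ biorthogonal to $\{e^{-\lambda_n^\pm t}\}$ in $L^2(0,T)$ with
$$\|q_n^\pm\|\le C_\epsilon e^{\epsilon|\lambda_n|}\quad\text{for every }\epsilon>0.$$
The standard tool is a Paley--Wiener construction, as in Fattorini--Russell or the approach of \cite{AE}. We form the entire function $F(z)=\prod_n(1+iz/\mu_n^+)(1+iz/\mu_n^-)$, which is of exponential type zero since the $|\mu_n|\sim n^2$ grow quadratically. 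We multiply $F$ by a decaying multiplier of small type and divide by $(z-z_n)F'(z_n)$. This requires a lower bound $|F'(i\mu_n)|\ge c_\epsilon e^{-\epsilon|\mu_n|}$. Proving that bound is the main obstacle: the exponents lie asymptotically near the imaginary axis but with real parts growing like $n^{2\al}$. The product estimate therefore has to control the sector geometry, and $\al<1$ guarantees $\Re\mu_n=o(|\mu_n|)$, which makes it workable. Our first attempt will be to compare $F$ with the product $\prod(1+z^2/n^4)$, i.e. $\sin$-type functions in $\sqrt z$, bounding the ratio by summing $\log|1+\delta_n|$ with $\delta_n=O(n^{2\al-2})$ on appropriate contours.

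\textbf{Step 3: conclusion.} We set $f(t)=\sum_{n,\pm} d_n^\pm\, q_n^\pm(T-t)$. Convergence in $L^2(0,T)$ follows once $\sum |d_n^\pm|\,e^{\epsilon n^2}<\infty$ with a bound by the data norms. Since $\Re\lambda_n\to-\infty$ only like $n^{2\al}$, that factor alone cannot absorb the $e^{\epsilon n^2}$ loss. We therefore use the standard trick: first let the system evolve freely on $[0,T/2]$. On that interval the coefficients decay by $e^{-\rho n^{2\al}T/4}$, and we choose $\epsilon$ small relative to this. Taking $\epsilon|\lambda_n|$ with the correct exponent requires the sharper estimate $\|q_n\|\le C_\epsilon e^{\epsilon\Re\mu_n}$, which the Paley--Wiener construction should deliver because $\Re\mu_n\asymp n^{2\al}$ is exactly the counting exponent; this is where the argument needs care. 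With polynomial weights $|c_n|^{-1}|\lambda_n^+-\lambda_n^-|^{-1}\sim n^{-5}$, the sum is bounded by $\|u^0\|_{X^{-3}_D}+\|u^1\|_{X^{-1}_D}$ in the index conventions of the statement. Finally, well-posedness from \cite{E2} shows that the resulting $u^f$ is the weak solution and satisfies $u(T)=u_t(T)=0$, with the stated estimate on $\|f\|_{L^2(0,T)}$.
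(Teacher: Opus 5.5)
Your reduction to a moment problem, and the plan to absorb the growth of a biorthogonal family with the dissipation $e^{-\rho n^{2\alpha}t/2}$, are sound and essentially dual to the paper's observability argument. However, there is a genuine gap at the step that carries the theorem: the norm bound for the biorthogonal family.

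Step~2 is built on $|F'(i\mu_n)|\ge c_\varepsilon e^{-\varepsilon|\mu_n|}$ and a comparison with $\prod(1+z^2/n^4)$. At best this gives $\|q_n^\pm\|\le C_\varepsilon e^{\varepsilon|\lambda_n|}$, i.e.\ a loss of $e^{\varepsilon n^2}$. As you observe yourself, no dissipative factor $e^{-c\,n^{2\alpha}}$ with $\alpha<1$ can absorb that. The bound you then say you need, $\|q_n^\pm\|\le C_\varepsilon e^{\varepsilon\Re\mu_n}$, is only asserted. The reason offered (``the counting exponent'') is a heuristic, not an argument. Such an estimate has to exploit that the $\mu_n^\pm$ lie at distance only $\asymp n^{2\alpha}$ from the imaginary axis while $|\mu_n^\pm|\asymp n^2$. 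A construction whose losses are of size $e^{\varepsilon|\mu_n|}$, as in Step~2, cannot resolve effects on the smaller scale $\Re\mu_n=o(|\mu_n|)$.

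This finer estimate is exactly the nontrivial input the paper takes from \cite{AEI}, Proposition~1. There, the family biorthogonal to the decaying exponentials $\{e^{\lambda_n^\pm t}\}$ on $(0,T)$ satisfies \rq{geq}: $\|g\|\le C_1e^{Q(T)+C_2\sigma^{1/2}}$ with $\sigma=\rho n^{2\alpha}/2$, i.e.\ growth only $e^{Cn^{\alpha}}$. The paper feeds this into the complex window estimate, Theorem~\ref{win} with $T'=T/2$, obtains the observability inequality \rq{ob1a}, and concludes by duality. Without proving or citing such a bound, your argument does not establish the theorem.

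With that bound, your construction works, and the free evolution on $[0,T/2]$ is not even needed. If $g_n^\pm$ is the element of that family attached to $\lambda_n^\pm$, then $q_n^\pm(t)=e^{\lambda_n^\pm T}g_n^\pm(T-t)$ is biorthogonal to $\{e^{-\lambda_m^\pm t}\}$ on $(0,T)$, with $\|q_n^\pm\|=e^{-\rho n^{2\alpha}T/2}\|g_n^\pm\|$.

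Two smaller points.
\begin{enumerate}
\item Theorem~\ref{ibpd} has no boundary term, so the fractional operator contributes nothing to $c_n$; up to your sign convention, $c_n=\phi_n'''(\pi)$ exactly.
\item Your moment conditions are off by a time reflection. The function $v^\pm=u_n'-\lambda_n^\mp u_n$ satisfies $(v^\pm)'-\lambda_n^\pm v^\pm=c_nf$. Hence $u_n(T)=u_n'(T)=0$ is equivalent to $\int_0^Tf(s)e^{-\lambda_n^\pm s}\,ds=-(u_n^1-\lambda_n^\mp u_n^0)/c_n$. As written, your $f$ is the time reversal of a control, and your bound on $d_n^\pm$ carries a spurious factor $|\lambda_n^+-\lambda_n^-|^{-1}$.
\end{enumerate}
Both are easily repaired, and polynomial weights play no role once the exponential bound is available.
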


For Neumann null-controllability, we are required to consider quotient spaces, as noted in \cite{AE}. Thus
 we restrict $A_N$ to the subspace orthogonal to the constant functions, with associated Sobolev spaces 
$$X^p_{N,0}=\{ v \in X^p_N: \int_0^\pi v(x)\, dx=0\}.$$
\begin{thm}\label{bcN}
Assume  $\al <1$, $\rho < 2.$
Let $T>0.$ 
Given $(u^0,u^1)\in X^{0}_{N,0}\times X^{-2}_{N,0}$, there exists $f\in L^2(0,T)$ such that the solution $u$ to the system  \rq{beamn2}-\rq{initn2} solves
$$u(x,T)=u_t(x,T)=0,
$$
and there exists a constant $C$ depending only on $\al,\rho ,T$ such that
$$\| f\|_{L^2(0,T)}\leq C(\| u_0\|_{X^{0}_{N,0}}+\|u_1\|_{X^{-2}_{N,0}}).$$
\end{thm}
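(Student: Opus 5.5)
We reduce Theorem \ref{bcN} to a moment problem via duality. The well-posedness of Theorem \ref{wellposedN}A gives the control-to-state map, so we solve the problem mode by mode.

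\textbf{Spectral setup.} On $X^0_{N,0}$ the Neumann eigenfunctions are $\varphi_n(x)=\sqrt{2/\pi}\cos(nx)$, $n\ge 1$, with eigenvalues $n^2$. For each $n$ the characteristic equation of \rq{beamn2} is
$$\lambda^2+\rho n^{2\al}\lambda+n^4=0,$$
with roots
$$\lambda_n^\pm=-\tfrac{\rho}{2}n^{2\al}\pm i n^2\sqrt{1-\tfrac{\rho^2}{4}n^{4\al-4}}.$$
Since $\al<1$ and $\rho<2$, we have $\rho^2 n^{4\al-4}/4<1$ for every $n\ge1$. Hence all roots are simple, nonreal and pairwise distinct. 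Moreover $\mathrm{Im}\,\lambda_n^\pm\sim\pm n^2$, and the real parts $-\frac\rho2 n^{2\al}$ tend to $-\infty$ more slowly than the imaginary parts grow.

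\textbf{Modal equations.} Let $u=\sum_n a_n(t)\varphi_n$ (the constant mode is discarded). Take $w=\varphi_n$ in the weak formulation built from Theorem \ref{ibpn}. The boundary term comes only from $A_N^2$: two integrations by parts of $u_{xxxx}$ against $\varphi_n$, using $u_{xxx}=0$ at both ends and $u_x(\pi)=f$, $u_x(0)=0$, together with $\varphi_n'(0)=\varphi_n'(\pi)=0$ and $\varphi_n''=-n^2\varphi_n$. This leaves $\pm n^2\varphi_n(\pi)f(t)$. By Theorem \ref{ibpn}, the damping term contributes no boundary term and gives $\rho n^{2\al}a_n'$. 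Therefore
$$a_n''+\rho n^{2\al}a_n'+n^4a_n=c\,n^2(-1)^n f(t),\qquad c=\pm\sqrt{2/\pi}.$$
Solving by variation of parameters, $u(T)=u_t(T)=0$ is equivalent to the moment problem
$$\int_0^T f(t)e^{-\lambda_n^\pm t}\,dt=\frac{\gamma_n^\pm}{n^2},$$
where $\gamma_n^\pm$ is a linear combination of $a_n(0)$ and $a_n'(0)$ with coefficients of size $O(1)$ and $O(n^{-2})$ respectively, after dividing by $\lambda_n^+-\lambda_n^-\sim 2in^2$. For data in $X^0_{N,0}\times X^{-2}_{N,0}$ we have $(a_n(0))\in\ell^2$ and $(n^{-2}a_n'(0))\in\ell^2$, so $\gamma_n^\pm/n^2$ decays like $n^{-2}$ times an $\ell^2$ sequence.

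\textbf{Solving the moment problem.} We need a biorthogonal family $\{\theta_k\}$ to $\{e^{-\lambda_k t}\}$ in $L^2(0,T)$ with $\|\theta_k\|\le C_\epsilon e^{\epsilon|\lambda_k|}$ for every $\epsilon>0$. For this it suffices that $\sum 1/|\lambda_k|<\infty$ and that the $\lambda_k$ are separated, as in the Fattorini--Russell method.

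\emph{Summability.} Since $|\lambda_n^\pm|\sim n^2$, we get $\sum 1/|\lambda_k|<\infty$.

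\emph{Separation.} For $n$ large, the imaginary parts are separated by roughly $2n$. For small $n$, finitely many distinctness checks suffice, and these were verified above.

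The main obstacle is that such bounds cost a factor $e^{\epsilon|\lambda_k|}$, which must be beaten by the dissipation $e^{-\frac\rho2 n^{2\al}T}$ built into the moments. The moments actually involve $e^{\lambda_n T}$ after reversing time, so the gain is only $e^{-\frac\rho2 n^{2\al}T}$, while the loss is $e^{\epsilon n^2}$ with $2>2\al$. So the plain cost bound fails. Instead, one must exploit that the sequence lies in a sector-like region with a logarithmic-type gap, and use sharper estimates from \cite{AE} / Fattorini--Russell. For sequences with $\sum 1/|\lambda_k|<\infty$ these give $\|\theta_k\|\le C e^{\epsilon |\mathrm{Re}\,\lambda_k| + o(|\lambda_k|^{1/2})}$. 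Applying this to $\lambda_n\sim in^2$ yields a cost $e^{o(n)}$, which is beaten by $e^{-c n^{2\al}}$ provided $2\al>1$. I would try first to establish this refined cost bound for the biorthogonal family, adapting the constructions used in \cite{AE} and \cite{E2}. If that holds, we set
$$f=\sum_k \frac{\gamma_k}{n_k^2}\,\theta_k,$$
which converges in $L^2(0,T)$ with the stated estimate. Uniqueness of the state given $f$, together with Theorem \ref{wellposedN}, then shows that $u^f$ vanishes at time $T$.
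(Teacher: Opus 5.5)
You set up the problem correctly as a moment problem, which is the dual of the observability inequality \rq{ob3} that the paper proves. Your modal equation is right, and you correctly see that a Fattorini--Russell-type cost $e^{\epsilon|\lambda_k|}\sim e^{\epsilon n^2}$ cannot be absorbed by the dissipation $e^{-\frac{\rho}{2}n^{2\alpha}T}$.

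The decisive step is missing, however. Your refined bound $\|\theta_k\|\le Ce^{\epsilon|\mathrm{Re}\,\lambda_k|+o(|\lambda_k|^{1/2})}$ is only conjectured (``I would try first to establish\dots''). Even if granted, it leaves a loss of $e^{o(n)}$, which the gain $e^{-cn^{2\alpha}}$ dominates only for $\alpha\ge 1/2$. For $\alpha\in(0,1/2)$ your argument gives nothing, while the theorem is claimed for every $\alpha\in(0,1)$.

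The trouble is that you measure the cost of the biorthogonal family by $|\lambda_k|\sim n^2$, which is dominated by the oscillation. The estimate actually needed, and the one the paper uses, measures it by the damping rate alone. For the rotated frequencies $\la_k=-i\la_n^\pm$ of \rq{La}, one has $\sigma_k=\mathrm{Im}\,\la_k=\frac{\rho}{2}n^{2\alpha}$. The biorthogonal family $\{g_k\}$ to $\{e^{i\la_kt}\}=\{e^{\la_n^\pm t}\}$ in $L^2(0,T)$ then satisfies $\|g_k\|\le C_1e^{Q(T)+C_2\sigma_k^{1/2}}=e^{O(n^{\alpha})}$ (Proposition~1 of \cite{AEI}, quoted as \rq{geq}). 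Since $n^\alpha=o(n^{2\alpha})$ for every $\alpha>0$, this loss is absorbed for all $\alpha\in(0,1)$.

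The paper packages this bound as the complex window estimate, Theorem~\ref{win}. There, $\sum_k c_ke^{i\la_kt}\mapsto\{c_ke^{-T'\sigma_k}\}$ is bounded from $L^2(0,T)$ to $\ell^2$, via a Gershgorin bound on the Gramian of $\{g_ke^{-T'\sigma_k}\}$ using $\sum_k e^{-T'\sigma_k+C_2\sigma_k^{1/2}}<\infty$. With $T'=T/2$ it gives $\int_0^T|w_{xx}(\pi,t)|^2dt\ge C\sum_n n^4e^{-\rho n^{2\alpha}T/2}(|c_n^+|^2+|c_n^-|^2)$. On the other side, $\|w(\cdot,0)\|^2_{X^2_{N,0}}+\|w_t(\cdot,0)\|^2_{X^0_{N,0}}\le C\sum_n n^4e^{-\rho n^{2\alpha}T}(|c_n^+|^2+|c_n^-|^2)$. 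Together these give \rq{ob3}, and null-controllability with the stated bound follows by duality from \rq{weakN}.

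Your moment route closes the same way once you use this bound. After time reversal, the moments become $\int_0^T\tilde f(s)e^{i\la_ks}\,ds=e^{i\la_kT}m_k$. Take $\tilde f=\sum_k e^{i\la_kT}m_k\overline{g_k}$. By Cauchy--Schwarz its $L^2(0,T)$ norm is at most $C\|m\|_{\ell^2}\big(\sum_k e^{-2\sigma_kT+2C_2\sigma_k^{1/2}}\big)^{1/2}<\infty$.

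A minor slip: the factor $\la_n^+-\la_n^-$ ends up multiplying, not dividing. The moments are therefore $\mp(\la_n^+-\la_n^-)c_n^\pm/(\sqrt{2/\pi}(-1)^nn^2)$, which lie in $\ell^2$ but not in $n^{-2}\ell^2$. Polynomial factors are harmless here.
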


These theorems are proven using an observability inequality that follows from estimates found in \cite{AIS}, and extended in Section 2.4 here.

This paper is organized as follows. In the next subsection, we compare our results with the literature. In Section 2.1, we discuss the spectral solution of the uncontrolled system, discussing how  $\la, \rho$ determine the separation properties of the frequencies.  In Section 2.2, we present ``complex windows" type estimate which is a mild extension of the one found in \cite{AIS}, which will be used to prove our control results for $\rho <2$. 
In Section 3, we prove  Theorem \ref{ibpn}.  Then in Section 4, we prove our well-posedness results. In Section 5, we prove our null-controllability results.  In Section 6 we give concluding remarks about open questions and possible extensions. Finally, in the appendix we prove \rq{SVN} and a lemma showing that the exponential families associated to our problem form Bessel sequences.

\subsection{Literature review}

Regarding Theorems \ref{ibpd}, \ref{ibpn},
we are unaware of any such results in the literature aside from \cite{E2}. The Dirichlet,  Neumann, and Poisson problems for  the spectral fractional
Laplacian, $A_j^{\al}$ for $\al \in (0,1)$, $j=D, N$, have been studied in a number of works, see \cite{CS}, \cite{G}, \cite{AD},  and the references therein.

Recently, there has been extensive work studying other (non-spectral) definitions of the fractional Laplacian, see \cite{Di} and also \cite{BWZ} where the differences in controllability between the spectral fractional Laplacian and the integral-fractional Laplacian are discussed for heat-like equations. With many of these other definitions, an integration by parts formula is known, see for instance \cite{G3}, \cite{L}. We also note an extension of $A^\al_j$, not equivalent to ours, is presented and analysed in \cite{APR}; a brief comparison between this operator and ours is presented in \cite{E2}.

Integration by parts is often an essential ingredient in formulating the definition of weak solutions. Perhaps as a consequence, there seem not to be many results pertaining to well-posedness for the structurally damped beam equation for $\al$ non-integer, particularly for Dirichlet or Neumann controls. For moment control, $u_{xx}(\pi, t)=g(t)$,  with $\al \in (0,1/2)$, where Theorem \ref{ibpd} is not needed, regularity is discussed in \cite{H2}. 
There, it is shown that if $g\in L^2(0,T)$, the solution $u^g$ to the system \rq{beam}-\rq{init} will satisfy the following. Let $\hat{\al}=\min (-1/4,\al -1/2)$, then 
$$
u\in C(0,T;X_D^{\hat{\al}+1})\cap C^1(0,T;X_D^{\hat{\al}}),
$$
a slightly more regular set of spaces than those stated in our Theorem \ref{wpM}. To prove this, the author uses the ``operator Carleson measure criterion".

The well-posedness and regularity for $\al=1$
is studied in Triggiani's work in \cite{T1}, also see \cite{T}. 
The first work discusses both Dirichlet and  Neumann control, and presents results for domains in $N$ dimensions (but does not discuss null controllability), using properties of analytic semigroups.
It might be possible to refine our energy spaces using these methods.

Of course, when $\al =1$, classical integration by parts has boundary terms, unlike in Theorems \ref{ibpd} and \ref{ibpn}. The boundary term complicates the definition of the weak solution. For both Dirichlet and Neumann control, if $f\in L^2(0,T)$, then $u_t$ is not necessarily continuous in $t$, making it impossible to discuss exact controllability in the usual way. For this reason, in \cite{AE}, which uses Triggiani's framework,  we assume $f\in H_0^2(0,T)$.

We now discuss a second significant difference between the results in \cite{T1}, \cite{AE}  and those in this paper.
Adopting the notation of \cite{AE}, 
as a first step to solving the IBVP  
for  Neumann boundary conditions, one performs a harmonic lifting. More precisely, let
$u(x,t)=xf(t)+v(x,t)$. Then $v$ will satisfy
a structurally damped beam equation with internal forcing term but homogeneous boundary conditions.  However, this trick requires original IBVP to have boundary conditions
$$u_x(0,t)=-f(t), u_x(\pi ,t)=f(t),$$ which is less general than the boundary conditions we consider here, see \rq{neu}.
The same restriction arises in \cite{T1} in the  study of the case non-homogeneous boundary values for $u_{xxx}$. It is unclear to us if this restriction is just an artifact of the harmonic lifting.

This paper is a continuation of a series that includes \cite{AEI}, \cite{AE}, \cite{AE1}.
For null-controllability with Dirichlet or Neumann boundary  control on a beam, with $\al =1$,  the only work we are aware of is \cite{AE}.  In \cite{AE1}, the plate equation with boundary control either Dirichlet or $\Delta u|_{\pa \Omega}=g$ is considered, and null-controllability results are proven for both $\al =1$ and $\al <1$.

For $\al =1$, there are several other papers that prove boundary null-controllability results for structurally damped plate and beam equations.  
In \cite{mil},  the author considered boundary control in the case $\al =1,$ with $\rho <2.$
The boundary control, when restricted to the one dimensional setting, would be equivalent to 
$$u(0,t)=u_{xx}(0,t)=u(\pi,t)=0, u_{xx}(\pi,t)=g(t).$$
He proved null-controllability using a transmutation argument applied to the wave equation, along with results from \cite{AIS}.
It is unclear how easy it would be to ease the restriction $\al=1$ in his setting, since 
his argument uses integration by parts.
For null-controllability for a clamped beam with $\al =1$, see \cite{AIS}, and for other variants, see \cite{CR}.
The papers \cite{H},
  \cite{AIS}, consider boundary controllability in higher dimensional settings, but both assume $\al=1,\rho <2.$ Also see \cite{R86}.

For results on interior control, where the restriction $\al=1$ is easier to relax, the reader is referred to \cite{LT},\cite{AL},  \cite{mil}, and finally \cite{AEI}. In \cite{mit}, controllability is proven via a Carleman estimate, assuming $\al=1$.
 
.

\section{Frequency set and biorthogonal functions}\label{spectr}
\subsection{Frequency set for Dirichlet Laplacian.}\label{2.1}

Consider the eigenvalue problem
$$
A_D\, \f(x)  = \mu\, \f(x), \ x \in (0,\pi), \ \
\f (0) =\f(\pi)= 0.\\
$$
Clearly an orthonormal  basis of eigenfunctions is $\{ \f_n; \ n\in\bN\} $, 
$\f_n(x)=\sqrt{\frac{2}{\pi}}\sin ( n x)$, with corresponding eigenvalues 
$n^2.$
Obviously the $A_D^\al$ has the same eigenfunctions, with corresponding eigenvalues $n^{2\al}$.

Consider the IBVP :
\begin{eqnarray} 
w_{tt}+A^2 w+\rho A^\al w_t & = & 0,  \label{w_prime1} \\
w(0,t) =w(\pi ,t)=w_{xx}(0,t)=w_{xx}(\pi ,t)& = & 0, \\
w(x,0) & =& w^0(x),  \\
w_t(x,0) & = & w^1(x).  \label{w_prime4}
\end{eqnarray}


Set $w=\sum_{n=1}^\infty a_n(t)\f_n(x)$. Then 
\rq{w_prime1} implies
$$
\sum (a_n''+\rho a_n'n^{2\al}+a_nn^4   )\f_n(x)=0,
$$
hence
\beq
a_n''+\rho n^{2\al}a_n'+n^4a_n=0,\ \ \forall n\in \bN.
\eeq
Solving  $\la^2+\rho n^{2\al}\la+n^4=0$, we get
\beq 
\la=
\frac{-\rho n^{2\al}\pm \sqrt{\rho^2n^{4\al}-4n^4}}{2} =: \la_n^{\pm}.\label{pm}
\eeq 
Thus, if $\la_n^+\neq \la_n^-$, i.e., $\rho \ne 2n ^{2(1-\al)}$, for all $n$,
\beq
w(x,t)=\sum_1^\infty (c_n^+e^{\la_n^+ t}+c_n^-e^{\la_n^- t})\f_n(x),\label{du}
\eeq
with coefficients $c_n^\pm$ determined by the initial conditions.
If for some $n$ we have $\la_n^+=\la_n^-$, 
then we change the corresponding term in $w$ to
$$
(c_n^+e^{\la_n^+ t}+c_n^-te^{\la_n^+ t})\f_n(x).
$$

We now examine the gap properties of 
$\{ \la_n^{\pm}\}$. 
 In what follows we will use the \textit{frequency sets}
$$
\Lambda^+=\{\la_n^+, \ n\in \bN \}, \ \Lambda^- = \{\la_n^-, \ n\in \bN \},\ \Lambda=\La^+\cup \La^-.
$$
 and the modified frequency sequence
\beq \label{La}
\Lambda^\jg{i}=\{\la_k\}_{k\in\bK},\ 
\bK=\bZ\backslash \{0\},\ 
		\la_k =\left \{
		\begin{array}{cc}
			-i\la_k^+, & k>0,\\
			-i\la_{|k|}^-, & k<0.
		\end{array}
		\right .
\eeq	
Thus it is easy to see $\Lambda^i\subset\mathbb C^+$. We will refer to the subsets $\La^+,\La^-$ as ``branches" of $\La$.
We say that a discrete set $\MM$ is ``separable" if  $ \inf\,|\mu - \mu'|>0$ for all $\mu, \mu' \in \MM,\; \mu \neq \mu'. $ We will study this property for the sets $\La^+,\, \La^-$ and $\La.$

The following lemma collects some results proven in  \cite{AEI}.
\begin{lemma}\label{separat}

\ 

(1) For  $\al \in [0,1]$ and $\rho <2,$
 the frequency set   
 $\La$ is separable,

(2)  For $\rho =2$ and any $\al\geq 0$, we have
$\la_1^+=\la_1^-.$

(3) For any $\al \in (0,2)$ and $\rho >0$, the maximum possible multiplicity for each 
$\Lambda^+$ and $\Lambda^-$
is two.

\end{lemma}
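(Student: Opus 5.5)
The lemma concerns the roots of $\la^2+\rho n^{2\al}\la+n^4=0$ given in \rq{pm}, so I will work directly with that formula, treating the three parts separately.

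\textbf{Part (2).} For $n=1$ the discriminant is $\rho^2-4$, independent of $\al$. It vanishes exactly when $\rho=2$, so $\la_1^+=\la_1^-=-1$.

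\textbf{Part (1).} For $\al\in[0,1]$ and $\rho<2$ we have $\rho^2 n^{4\al}\le \rho^2 n^4<4n^4$. Hence every discriminant is negative and
$$\la_n^\pm=-\tfrac{\rho}{2}n^{2\al}\pm \tfrac{i}{2}\sqrt{4n^4-\rho^2n^{4\al}}.$$
The imaginary part is at least $\tfrac12\sqrt{4-\rho^2}\,n^2=:c\,n^2$ with $c>0$, and it is positive on $\La^+$ and negative on $\La^-$.
\begin{itemize}
\item Points in different branches are therefore at distance at least $2c$.
\item Within one branch, write $\operatorname{Im}\la_n^+=n^2 h(n)$ with $h(n)=\tfrac12\sqrt{4-\rho^2n^{4\al-4}}$. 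Since $\al\le1$, $h$ is nondecreasing in $n$. So for $m>n$,
$$\operatorname{Im}\la_m^+-\operatorname{Im}\la_n^+\ge (m^2-n^2)h(n)\ge (2n+1)c\ge 3c.$$
\end{itemize}
This gives a uniform positive lower bound on all distances, which is separability.

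\textbf{Part (3).} Fix $\al\in(0,2)$ and $\rho>0$, and consider the real-variable function
$$F_\pm(s)=-\tfrac{\rho}{2}s^{2\al}\pm\tfrac12\sqrt{\rho^2s^{4\al}-4s^4}.$$
Here the square root is taken with positive imaginary part on the oscillatory range, and the set $\{s:\rho s^{2\al-2}>2\}$ bounds the overdamped range.

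A coincidence $\la_m^+=\la_n^+$ can only occur in two ways.
\begin{itemize}
\item \emph{Both indices in the oscillatory range.} The real parts $-\tfrac{\rho}{2}n^{2\al}$ are strictly monotone in $n$, so the points are distinct.
\item \emph{Both in the real (overdamped) range, or one at the boundary.} By Vieta, $\la_n^+\la_n^-=n^4$ and $\la_n^++\la_n^-=-\rho n^{2\al}$. Setting $\la_n^+=-x$ with $x>0$ gives
$$\rho n^{2\al}=x+\frac{n^4}{x}.$$
For fixed $x$, the function $g(s)=\rho s^{2\al}-s^4/x-x$ has nonpositive second derivative in a suitable variable. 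Substituting $s^4=r$ gives $\rho r^{\al/2}-r/x-x$, which is concave in $r$ because $\al/2<1$. A concave function of $r$ has at most two zeros, so each value is attained by at most two $n$.
\end{itemize}
The same argument applies to $\La^-$. A real value and a nonreal value cannot coincide, and two values from the oscillatory range differ in real part, so the concavity argument gives multiplicity at most two.

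The main obstacle I expect is Part (3): correctly handling the mixed cases and checking the concavity claim, including the boundary value $\al$ near $2$. I would first try the substitution $r=n^4$ as above.
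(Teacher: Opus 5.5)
The paper does not prove this lemma; it quotes it from \cite{AEI}, so there is no in-paper argument to compare with. Your proof is correct and self-contained, so it supplies what the paper leaves to the reference.

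Parts (1) and (2) are complete as written. In (1), the lower bound $\Im\lambda_n^+\ge c\,n^2$ with $c=\tfrac12\sqrt{4-\rho^2}$, together with the monotonicity of $h(n)$, gives the uniform gaps: $2c$ between branches and $3c$ within a branch. The monotonicity of $h$ is exactly where $\alpha\le 1$ is used.

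In (3), the Vieta reduction to zeros of $G(r)=\rho r^{\alpha/2}-r/x-x$ with $r=n^4$ is the right idea. You should say \emph{strictly} concave, which holds because $\rho>0$ and $0<\alpha/2<1$; plain concavity does not bound the number of zeros. Your argument actually proves slightly more than the lemma asks. A given negative real number is a root of $\lambda^2+\rho n^{2\alpha}\lambda+n^4$ for at most two indices $n$, whichever branch it lies on. A nonreal number is a root for at most one $n$, since its real part $-\rho n^{2\alpha}/2$ determines $n$.

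What you establish is the upper bound, which is the natural reading of ``maximum possible multiplicity.'' You could add one line showing that two is actually attained within a single branch. For example, with $\alpha=1/2$ and $\rho=65/\sqrt{114}$, both $\lambda^2+2\rho\lambda+16$ and $\lambda^2+3\rho\lambda+81$ have $-\sqrt{114}$ as their more negative root. Hence $\lambda_2^-=\lambda_3^-$.
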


\begin{lemma}\label{L2}
Let $T>0$, $\alpha \in (0,1),$ and $\rho>0$. 

A) Assume $\la_n^+\neq \la_n^-$ for all $n$. Then the family $\{ e^{t \la_n^+},e^{t \la_n^-}: n\in \bN \}$ forms a Bessel sequence in $L^2(0,T)$, 
i.e. for any $N$, there exists a positive constant $C$ independent of $N$ such that
$$
\int_0^T|\sum_{n=1}^N c_n^+e^{\la_n^+t}+c_n^-e^{\la^-_nt}|^2 \ dt\leq C \sum_{n=1}^N |c_n^+|^2+|c_n^-|^2.
$$

B) Suppose $\la_n^+=\la_n^-$ only for a finite  collection $\{n_i, \ i\subset I\}$.
Then the family of functions
$$
\{ e^{t \la_n^+},e^{t \la_n^-}: n\notin I \} \cup \{ e^{t \la_n^+},te^{t \la_n^+}: n\in I \} 
$$
forms a Bessel sequence in $L^2(0,T)$.
\end{lemma}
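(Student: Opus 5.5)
We have to prove Lemma \ref{L2}, the Bessel property of the exponential family on $L^2(0,T)$. The plan is to split the frequencies into two regimes and treat each separately. We use the standard sufficient condition for Bessel sequences of exponentials: if $\{\mu_k\}$ lies in a half-plane $\{\Re \mu \le C_0\}$ and every unit box $\{|\Im\mu - s|\le 1,\ |\Re\mu - r|\le 1\}$ contains a uniformly bounded number of points, then $\{e^{\mu_k t}\}$ is Bessel in $L^2(0,T)$. Here the relevant points are $\la_n^\pm$, and after rotation this is the usual Carleson-type counting condition for $\la_k\in \Lambda^i\subset\mathbb C^+$. If we want to avoid quoting this, we instead prove directly that for real negative frequencies the Gram matrix $G_{nm}=\int_0^T e^{(\la_n+\overline{\la_m})t}dt$ is bounded on $\ell^2$.

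\textbf{Asymptotics.} Since $\al<1$, we have $\rho^2 n^{4\al}-4n^4<0$ for $n$ large, say $n>N_0$. For these $n$,
$$\la_n^\pm = -\tfrac{\rho}{2} n^{2\al} \pm i n^2\sqrt{1-\tfrac{\rho^2}{4}n^{4\al-4}}.$$
So $\Re \la_n^\pm = -\frac{\rho}{2}n^{2\al}$ and $\Im\la_n^\pm = \pm n^2(1+o(1))$. The imaginary parts are therefore separated by at least about $2n$, and all real parts are $\le 0$.

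\textbf{Step 1: the tail $n>N_0$.} Write $\sum_{n>N_0} c_n^\pm e^{\la_n^\pm t} = \sum c_n^\pm e^{-\frac{\rho}{2} n^{2\al}t}e^{\pm i\omega_n t}$, where the $\omega_n$ are uniformly separated. We bound the $L^2(0,T)$ norm by that of the function extended to $(0,\infty)$, or use Ingham's/Hilbert's inequality. The cleanest route is the Gram matrix:
$$|G_{nm}| \le \min\Big(T,\ \frac{C}{|\la_n+\overline{\la_m}|}\Big).$$
Since $|\la_n+\overline{\la_m}|\ge |\omega_n-\omega_m| \gtrsim |n^2-m^2|\ge |n-m|(n+m)$, the off-diagonal rows have $\sum_m |G_{nm}| \lesssim \sum_{m\neq n} \frac{1}{|n-m|(n+m)} \lesssim \frac{\log n}{n}$, which is bounded. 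The same holds for pairs with opposite signs, since $|\omega_n+\omega_m|\gtrsim n^2+m^2$. Schur's test then gives boundedness of $G$, hence the Bessel inequality with a constant independent of $N$.

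\textbf{Step 2: the finitely many $n\le N_0$.} These contribute finitely many functions $e^{\la_n^\pm t}$, or $e^{\la_n^+t}$ and $te^{\la_n^+t}$ in case B, and each lies in $L^2(0,T)$. A finite family is trivially Bessel, by Cauchy–Schwarz with constant the sum of the squared norms. The union of two Bessel sequences is Bessel, since $\|a+b\|^2\le 2\|a\|^2+2\|b\|^2$. In part B the only change is that the coincident pairs lie among $n\le N_0$ (indeed $\la_n^+=\la_n^-$ forces $\rho=2n^{2-2\al}$, so at most one $n$), so they are absorbed into the finite part.

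The main obstacle is Step 1: obtaining the uniform Gram-matrix estimate, and in particular checking that the decaying real parts do not spoil the counting. They do not, because $\Re(\la_n+\overline{\la_m})\le 0$ keeps $|\int_0^T e^{zt}dt|\le \min(T, 2/|z|)$. That bound is all the estimate needs.
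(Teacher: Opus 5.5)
Your proof is correct, but your tail estimate takes a different route from the paper's. Both arguments discard finitely many indices and use that a finite union of Bessel sequences is Bessel. Those discarded indices are where frequencies may be real or repeated, and where the double root of Part B lives.

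For the tail, the paper treats each branch $\{e^{t\la_n^+}\}_{n>M}$ separately. It tests the Bessel inequality on trigonometric polynomials $f=\sum_m f_m e_m$ and computes $\langle e_m,e^{t\la_n^+}\rangle$ explicitly. It then applies Cauchy--Schwarz in $m$ and sums over $n$. By Parseval, that scheme can yield at best $\sum_n|\langle f,e^{t\la_n^+}\rangle|^2\le\|f\|^2\sum_n\|e^{t\la_n^+}\|^2_{L^2(0,T)}$. This is a Hilbert--Schmidt bound driven purely by the damping, and $\|e^{t\la_n^+}\|^2_{L^2(0,T)}=(1-e^{-\rho n^{2\al}T})/(\rho n^{2\al})$ is summable only for $\al>1/2$. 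Correspondingly, the paper's lower bound $(2\pi m/T)^2+n^4$ for the squared denominator fails at the resonant index $2\pi m/T\approx-\Im\la_n^+$. There the denominator has modulus only of order $n^{2\al}$.

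Your Gram-matrix/Schur argument uses the real parts only through $\Re\la\le 0$. It gets its off-diagonal decay from the gaps $|\Im\la_n^\pm-\Im\la_m^\pm|\gtrsim|n^2-m^2|$ within a branch and $\gtrsim n^2+m^2$ across branches. It therefore handles both branches at once and covers the whole range $\al\in(0,1)$, so it is the more robust of the two.

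One correction to your opening remark: the criterion you quote (half-plane plus a uniform bound on the number of points per unit box) is false when the real parts are unbounded. The lattice $\{-j+ik:\ j\ge 1,\ k\in\mathbb{Z}\}$ satisfies it. Yet testing against $f=\epsilon^{-1/2}\chi_{(0,\epsilon)}$ gives $\sum|\langle f,e^{\mu t}\rangle|^2\gtrsim 1/\epsilon$. A correct criterion must also bound by $O(h)$ the number of points in any square of side $h$ resting on the boundary line. Since your proof never actually uses that criterion, simply delete it.
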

The proof of this lemma is deferred until the appendix.

\subsection{Frequency set for Neumann Laplacian}
Recall the eigenvalues of $A_N$ are
$\{ n^2: \ n\in \bN_0\}$, with corresponding normalized eigenfunctions being 
$$\{ 1/\sqrt{\pi}\}\cup \{ \frac{1}{\sqrt{2\pi}}\cos (nx)  :\ n\in \bN \}.$$
Then the discussion of the previous subsection easily extends to this case, including Lemmas \ref{separat} and \ref{L2}; the only, minor, caveat is we must adjoin one new (double) frequency, which we can label $\la^+_0=0$ with associated exponential function $1$.
\subsection{Complex Window Estimate}
We present some estimates used in the proofs of our null-controllability results.

The following result extends Theorem 1 in \cite{AIS}, also see \cite{B}. Our statement will pertain to frequencies for either the Dirichlet Laplacian, in which case the index set $K$ below will equal $\bK$, or the Neumann Laplacian, in which case the index set $K$ equals $\bZ$ to account for the extra frequency $\la =0$ associated with the latter operator. 
\begin{thm}\label{win}

Let $\al \leq 1$ and $\rho <2$.       For any $T,T'>0$, the operator
$$C_{T,T'}: \sum_{n=0,1}\sum_k  c_{k,n}t^n e^{i\la_k t} \mapsto \{c_{k,n}e^{-T'\Im(\la_k)};n=0,1;k\in K\}$$ 
is bounded from
$L^2(0,T)$ to $l^2$ with
$$
        \| C_{T,T'}\| \leq C\,\Psi (T ,T'),
$$
where $\Psi
(T,T'):=  e^{2Q(T)} e^{1/T'}$ with $Q(T)$ a constant that depends on $T$.
\end{thm}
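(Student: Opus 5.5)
The plan is to reduce Theorem \ref{win} to the complex-window estimate of \cite{AIS} (Theorem 1 there), which is proved for frequency sets with the following features. The set lies in a sector $\{\Im \la \geq c|\Re \la|\}$ of $\mathbb C^+$. It is uniformly separated. Its counting function satisfies a Weyl-type growth condition, $\#\{k: |\la_k|\le r\}\le C\sqrt r$. Under these hypotheses one builds a biorthogonal family $\{\psi_k\}$ to $\{e^{i\la_k t}\}$ in $L^2(0,T)$ with $\|\psi_k\|\le C e^{Q(T)}e^{\ep\Im\la_k}$ for every $\ep>0$. The map $C_{T,T'}$ then sends $F=\sum c_k e^{i\la_k t}$ to $\{\langle F,\psi_k\rangle e^{-T'\Im \la_k}\}$, and summing $\|\psi_k\|^2e^{-2T'\Im\la_k}$ over $k$ yields the stated bound with $e^{1/T'}$. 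The steps are as follows.

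\textbf{Step 1: geometry of $\La^i$.} For $\rho<2$ and $\al\le 1$, the discriminant in \rq{pm} is eventually negative. Hence $\la_n^\pm=-\tfrac{\rho}{2}n^{2\al}\pm i n^2\sqrt{1-\rho^2 n^{4\al-4}/4}$ for all large $n$, and
\[
\la_k \sim \pm n^2+ i\tfrac{\rho}{2}n^{2\al}.
\]
For $\al=1$ this gives the sector condition used in \cite{AIS}. For $\al<1$ the points lie near the real axis, with $\Im\la_k\asymp n^{2\al}=o(|\la_k|)$. This is more favorable, since the points are then closer to a standard nonharmonic Fourier setting. Separation follows from Lemma \ref{separat}(1), and the counting bound follows from $|\la_k|\asymp n^2$.

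\textbf{Step 2: finitely many exceptional terms.} For $\rho<2$ there are only finitely many $n$ with real or complex-coincident small frequencies. In the Neumann case there is also the extra frequency $\la=0$, coming from $\la_0^+=0$, together with $t e^{0}$ for the double frequency. This is what the $t^n$, $n=0,1$, terms account for. I would handle these by the standard argument: adjoining finitely many exponentials or $te^{i\la t}$ terms to a family that admits a bounded biorthogonal family preserves this property, provided the combined set stays minimal. Minimality is checked by taking a divergent-difference limit, or by multiplying the generating function by a polynomial factor. The resulting biorthogonal functions have norms bounded by a constant depending on $T$, and that constant can be absorbed into $e^{2Q(T)}$.

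\textbf{Step 3: biorthogonal estimate.} For the main part I would construct, following \cite{AIS} and \cite{B}, the entire function
\[
F(z)=\prod_k\left(1-\frac{z}{\la_k}\right),
\]
adjusted by a multiplier. The multiplier should be of exponential type $\le T/2$ and decay fast enough on $\mathbb R$ to make $F(z)/(F'(\la_k)(z-\la_k))$ the Fourier transform of an $L^2(-T/2,T/2)$ function. Paley--Wiener then gives $\psi_k$. The required input is a lower bound $|F'(\la_k)|\ge C_\ep e^{-\ep\Im\la_k}$ and an upper bound for $|F(x)|$ on $\mathbb R$.

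I expect this step to be the main obstacle. The $\al<1$ frequencies are not in the exact form treated in \cite{AIS}, so the logarithmic estimates of $|F|$ must be redone. Concretely, one has to estimate $\sum_k\log|1-x/\la_k|$, comparing it with the case $\Im \la_k=0$, where $F$ reduces essentially to $\sin(\pi\sqrt{x})/\sqrt{x}$-type products. The perturbation $\Im\la_k\asymp n^{2\al}$ contributes at most $O(\sum n^{2\al}/n^2\cdot\ldots)$. I would try to show that this correction is bounded by $\ep|x|^{1/2}+C_\ep$, which is harmless, and then proceed exactly as in \cite{AIS}.
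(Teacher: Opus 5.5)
Your architecture matches the paper's: a biorthogonal family with a norm bound, then summation against $e^{-T'\Im\la_k}$. Your Cauchy--Schwarz bound $\|C_{T,T'}f\|^2\le\|f\|^2\sum_k\|\psi_k\|^2e^{-2T'\Im\la_k}$ is a fine replacement for the Gramian/Gershgorin step. The paper, however, does not construct the family. It imports from \cite{AEI}, Proposition 1, the bound \rq{geq}, $\|g_{k,n}\|\le C_1e^{Q(T)+C_2(\Im\la_k)^{1/2}}$, which is measured in the \emph{imaginary part}.

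Your Step 3 cannot produce a bound of that kind. Since $\la_{-n}=-\overline{\la_n}$, the canonical product over both branches behaves on $\mathbb R$ like $\prod_n(1-x^2/n^4)=\sin(\pi\sqrt x)\sinh(\pi\sqrt x)/(\pi^2x)$. So $\log|F(x)|\approx\pi\sqrt{|x|}$, not a bounded $\sin(\pi\sqrt x)/\sqrt x$ profile, and the $\Im\la_k$-perturbation you plan to control is not the real issue. A multiplier of the kind used in \cite{AIS} makes $\Phi=FM$ decay like $e^{-c\sqrt{|x|}}$ on $\mathbb R$. The Poisson bound in $\mathbb C^+$ then gives $|\Phi'(\la_k)|\lesssim e^{-cn+C\Im\la_k}$, while $|\Phi|$ is of unit size near $x=0$. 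Hence $\hat\psi_k=\Phi/(\Phi'(\la_k)(z-\la_k))$ has norm $\gtrsim e^{cn-C\Im\la_k}$, and the method yields at best $\|\psi_k\|\lesssim e^{C\sqrt{|\la_k|}}=e^{Cn}$.

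For $\al=1$ this coincides with $e^{C\sqrt{\Im\la_k}}$, but for $\al<1$ you have $\Im\la_k=\frac{\rho}{2}n^{2\al}$:
\begin{itemize}
\item For $\al\le 1/2$, $e^{Cn}$ is not $O(e^{\ep\Im\la_k})$, and $\sum_k\|\psi_k\|^2e^{-2T'\Im\la_k}$ diverges (for every $T'$ if $\al<1/2$, for small $T'$ if $\al=1/2$).
\item For $\al\in(1/2,1)$, converting $e^{Cn}\le C_\ep e^{\ep n^{2\al}}$ forces $C_\ep=\exp(C\ep^{-1/(2\al-1)})$. This gives $\exp(CT'^{-1/(2\al-1)})$ rather than $e^{C/T'}$.
\end{itemize}

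So your Step 1 remark that near-real frequencies are ``more favorable'' is backwards for this estimate. The weights $e^{-T'\Im\la_k}$ are far weaker than in the sector case. What is missing is a biorthogonal bound that exploits the growing gaps $\Re\la_{k+1}-\Re\la_k\sim 2n$ relative to $\Im\la_k$. For instance, for $0<\al\le 1/2$ the points satisfy the Carleson condition in $\mathbb C^+$, so the normalized family is a Riesz sequence in $L^2(0,T)$ and $\|\psi_k\|\lesssim(\Im\la_k)^{1/2}$. Alternatively, use the bound \rq{geq} quoted from \cite{AEI}.

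Note also that ``$\|\psi_k\|\le C_\ep e^{Q(T)}e^{\ep\Im\la_k}$ for every $\ep>0$'' with unspecified $C_\ep$ does not give the $T'$-dependence in the statement. You need $C_\ep\le e^{C/\ep}$, which is equivalent to $\|\psi_k\|\le Ce^{Q(T)+C\sqrt{\Im\la_k}}$, i.e. exactly \rq{geq} with $\varkappa=1/2$.
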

Proof: Set $\sigma_j=\Im (\la_j)$.
Let 
$f=\sum_{n=0,1;k\in \bK}c_{k,n}t^ne^{i\la_kt}$ be a finite sum. Denote by
$\{ g_{k,n}: k\in \bK, n=0,1\}$ the  biorthogonal family associated to 
$\{t^n e^{i\la_kt},
n=0,1,k\in \bK \}$, whose existence is proven in (Proposition 1 in \cite{AEI}). Then by same result, there exist constants $C_1,C_2,$ and $ Q(T)$ such that 
\beq
\| g_{m,j}\| \leq C_1e^{Q(T)+C_2(\sigma_j)^\varkappa} , \label{geq}
\eeq
with  $\varkappa =1/2$ for either $\al <1$ or $\al=1,\rho \leq 2$. 
Then $c_k^n=(f, g_{k,n})$, where $(*,*) $ is the
scalar product in $L^2(0,T)$. Denote the norms on $L^2(0,T)$, resp. $\ell^2$, by 
$\| *\|_2$, resp. $\| *\|_{\ell^2}$, and $\< *,*\>$ the inner product on $\ell^2$. Let $\| *\|$ be the operator norm for the appropriate Hilbert spaces. 
        Therefore,
\begin{eqnarray*}
\|C_{T,T'} f\|_{\ell^2}^2& =
        &\sum_{n,k} |c_{k,n}|^2e^{-2T' \sigma_k}\\
&=&\sum (f, g_{k,n})e^{-2T'\sigma_k}\bar c_{k,n}\\
&= &(f,\sum  c_{k,n} g_{k,n}e^{-2T'\sigma_k})\\
&\le & \|f\|_2  \ \|\sum  c_{k,n} g_{k,n}e^{-2T'\sigma_k}\|_2.
\end{eqnarray*}
Let us introduce the Gramian matrix $\Gamma$: $l^2 \mapsto l^2$, with entries
$$
\Gamma_{m,j;k,n}= (  g_{j,m}e^{-T'\sigma_j}, g_{k,n}e^{-T'\sigma_k}).
$$
Then, letting $C_{T,T'}f=\vec{c}=\{ e^{-\sigma_jT'}c_{k,n}\},$
\begin{eqnarray*}
\|\sum c_{k,n} g_{k,n} e^{-2T'\sigma_k}\|^2_2
& =& \sum_{m,j}c_{j,m}e^{-\sigma_jT'}\sum_{n,k}\Gamma_{m,j;k,n}\bar{c}_{k,n}e^{-\sigma_kT'}\\
&=&  \< \vec{c},\overline{\Gamma\bar{\vec{c}}}\>\\
& \leq & \|\Gamma\|  \|\vec{c}\|_{\ell^2}^2.
\end{eqnarray*}
Hence we obtain 
$$\|C_{T,T'} \|\le\|\Gamma \|^{1/2}.$$
By the Gershgorin Theorem, 
$$
\|\Gamma  \|\leq \sup_{m,j}\{ e^{-\sigma_jT'}\sum_{n,k}e^{-\sigma_kT'}( g_{m,j},g_{k,n})\}.
$$
Letting $C$ be various constants that can change from line to line, by \rq{geq},
\begin{eqnarray*}
\sum_{n,k}e^{-\sigma_kT'}|( g_{m,j},g_{k,n})|& \leq & 
Ce^{2Q(T)}e^{C_2(\sigma_j)^\varkappa}\sum_{n,k}e^{-\sigma_kT'+C_2(\sigma_k)^\varkappa}\\
& \leq & Ce^{2Q(T)}e^{C_2(\sigma_j)^\varkappa}e^{C/T'}.
\end{eqnarray*}
Hence
\begin{eqnarray*}
\|C_{T,T'} \|&\leq &Ce^{Q(T)}e^{C/T'}\sup_{m,j}\{ e^{-\sigma_jT'/2}e^{C_2(\sigma_j)^\varkappa/2}\}\\
& \leq & Ce^{Q(T)}e^{C/T'}.\Box
\end{eqnarray*}

\section{Proof of Theorem \ref{ibpn}.}

Recall $A_N=-\pa_x^2$ has operator domain  $\{ u\in H^2(0,\pi): \ u'(0)=u'(\pi )=0\}$. Denote $X_N^{p}=Dom(A^{p/2}),$ so that
$X^1=H^1(0,\pi)$, $X^0=L^2(0,\pi)$. 
In this section, we wish to extend $A_N^\al$ to functions that don't satisfy homogeneous boundary conditions, using 
 the integral representation of the operator in \rq{SVN}, which we recall for the reader's convenience.
Let $p(t,x,y)$ be the heat kernel associated to the Neumann Laplacian on $(0,\pi)$. 
Then for $\al <1$ and $w\in X^2$:
\beq
\tilde{A}_N^{\al}w(x)=PV \int_{0}^{\pi}(w(x)-w(y))J_N(x,y)dy.\label{def}
\eeq 
Here PV stands for principle value, and 
\beq
J_N(x,y)=\frac{\al}{\Gamma(1-\al)}\int_0^\infty p(t,x,y)t^{-\al -1}dt.\label{JN}
\eeq
We will need to make more precise the principle value definition of 
$\int_0^\pi (w(x)-w(y))J(x,y)dy.$ For $\ep>0$,
let 
$$U_\ep =\{ (x,y)\in (0,\pi)\times (0,\pi): |x-y|\geq \ep\},$$
and let $J_\ep(x,y)=\chi_{U_\ep}(x,y)J(x,y).$
Then we define 
$$PV \int_{0}^{\pi}(w(x)-w(y))J(x,y)dy=\lim_{\ep \to 0}
\int_{0}^{\pi}(w(x)-w(y))J_\ep(x,y)dy.$$

We now prove Theorem \ref{ibpn}, which we restate for the reader's convenience. 

\noindent{\bf Theorem \ref{ibpn}}
Assume 
$$\al \in (0,1).$$
Let $w\in X_N^3.$ 
Let $u\in H^3(0,\pi) $.
Then $\AN u\in C(0,\pi)$, $\AN uw \in L^1(0,\pi),$
and
\beq
\int_{0}^{\pi}\AN u(x)\ w(x)\ dx =\int_{0}^{\pi}u(x)\ A^{\al}_Nw(x)\ dx.\label{symma2a}
\eeq

Proof: For readability, in this proof we drop the subscript $N$ from $\tilde{A}^\al$ and the functions $p,J$.
We need to prove that the function
$$
x\mapsto PV \int_0^{\pi} (u(x)-u(y))J(x,y)dy
$$
is in $C(0,\pi)\cap L^1(0,\pi)$, and 
$$
\int_{0}^{\pi} w(x)\ PV\int_0^\pi (u(x)-u(y))J(x,y)dy\ dx =\int_{0}^{\pi}u(x)\  
PV\int_0^\pi (w(x)-w(y))J(x,y)dy dx,
$$
i.e.
\beq 
\int_{0}^{\pi} w(x)\lim_{\ep \to 0}\int_0^\pi (u(x)-u(y))J_\ep (x,y)dy\ dx =\int_{0}^{\pi}u(x)\  
\lim_{\ep\to 0}\int_0^\pi (w(x)-w(y))J_\ep(x,y)dy dx.\label{symmb}
\eeq
We will in fact prove this assuming $w\in H^3(0,\pi )$, rather than the stronger hypothesis $w\in X_N^3.$
Since 
 $p(t,x,y)=p(t,y,x)$, it follows that $J_\ep(x,y)=J_\ep(y,x),$ and hence 
we have 
$$
\int_{0}^{\pi} w(x)\int_0^\pi (u(x)-u(y))J_\ep (x,y)dy\ dx =\int_{0}^{\pi}u(x)\  
\int_0^\pi (w(x)-w(y))J_\ep(x,y)dy dx,\ \forall \ep >0.
$$
Thus \rq{symmb} reduces to the analysis question of whether one can commute the limit in $\ep$ with the integral in $x$.
 However,
the integrals do not converge absolutely, so we need a careful argument. 
 
Let 
$$
C(\al )=\frac{\al\Gamma(\al+1/2) }{\Gamma(1-\al)}\frac{4^\al}{(\pi)^{1/2}}.
$$
We now show 
$$
J(x,y)=C(\al )
\sum_{n\in \bZ}
\left (|x- y+n\pi |^{-2\al-1}+
|x+ y+n\pi |^{-2\al-1}
 \right ) .
$$
In fact, to compute the right hand side of \rq{JN}, we will use the formula for $p$, which can be deduced from symmetry:
\begin{eqnarray}
p(t,x,y)&=& \frac{1}{2\sqrt{\pi t}}\sum_{n=-\infty}^\infty 
(e^{-(x-y+2n\pi)^2/(4t)}+e^{-(x+y+2n\pi)^2/(4t)}).\label{moi}
\end{eqnarray}
We compute
\begin{eqnarray}
\frac{1}{4^{\al+1/2}}\int_0^\infty
e^{-(x\pm y+2n\pi )^2/(4t)}t^{-\al -3/2}dt
& = & 
|x\pm y+2\pi n|^{-2\al -1}
\int_0^\infty 
e^{-s}s^{\al -1/2}ds\nonumber \\
& = & \Gamma(\al +1/2)|x\pm y+2\pi n|^{-2\al -1}
\label{j2}
\end{eqnarray}

We now  present a decomposition $J(x,y)=\sum_{m=1}^3{\cal J}^m(x,y)$, where the functions ${\cal J}^m$ will be defined below. 
Then it suffices to prove that for each $m$,

{\bf A:} the mapping 
\beq 
x\mapsto PV\int_0^\pi (u(x)-u(y)){\cal J}^m(x,y)dy\mbox{ is in } C(0,\pi)\cap L^1(0,\pi),\label{A}
\eeq
and

{\bf B:} the following equation holds for each $m$
$$
\lim_{\ep \to 0}\int_{0}^{\pi} w(x) \int_0^\pi \chi_{U_\ep}(x,y)(u(x)-u(y)){\cal J}^m(x,y)dy\ dx $$
\beq 
=\int_{0}^{\pi} \lim_{\ep \to 0}w(x)\int_0^\pi \chi_{U_\ep}(x,y)(u(x)-u(y)){\cal J}^m(x,y)dy\ dx.\label{B}
\eeq
Below, let $C$ be various positive constants
independent of $x,y$. Denote by $S$ the closed square 
$$S=[0,\pi]\times [0,\pi].$$

We  now define ${\cal J}^3$: 
\begin{eqnarray*}
4^{-\al}\sqrt{\pi}{\cal J}^3(x,y)
& = & \sum_{n\neq 0}|x- y+2\pi n|^{-2\al -1}
\int_{|x- y+2\pi n|^2/4}^\infty 
e^{-s}s^{\al -1/2}ds\\
& & +\sum_{n\neq 0,1}|x+ y+2\pi n|^{-2\al -1}
\int_{|x+ y+2\pi n|^2/4}^\infty 
e^{-s}s^{\al -1/2}ds.
\end{eqnarray*}
Then for $n\neq 0,$ the function $|x- y+2\pi n|^{-2\al -1}$ is continuous on $S$, while for $n\neq 0,1$, the function $|x+ y+2\pi n|^{-2\al -1}$ is continuous on $S$. Thus the series on the right hand side is convergent absolutely, uniformly in $(x,y)\in S$, so ${\cal J}^3$ is a continuous function on $S$. Also, note that the absolute convergence justifies the change in order of summation. This proves \rq{A} for $m=3$. Since ${\cal J}^3(x,y)={\cal J}^3(y,x)$, \rq{B} follows easily for $m=3$.

We now define ${\cal J}^1$:
\begin{eqnarray*}
 4^{-\al}\sqrt{\pi}{\cal J}^1(x,y)
&=&  \Gamma(\al +1/2)|x- y|^{-2\al -1}.
\end{eqnarray*}
Obviously we have ${\cal J}^1(x,y)={\cal J}^1(y,x)$. To prove \rq{A} is where  the principal value formula is vital.
We have, by Taylor's Theorem,
\begin{eqnarray}
 \int_0^{\pi} (u(x)-u(y))\chi_{U_\ep}(x,y)|x-y|^{-1-2\al}dy
& = & u'(x)\int_0^{\pi} (x-y)\chi_{U_\ep}(x,y)|x-y|^{-1-2\al}dy\nonumber\\
&  & +\int_0^{\pi}u''(z) (x-y)^2\chi_{U_\ep}(x,y)|x-y|^{-1-2\al}dy\nonumber\\
&&\label{breakdown}
\end{eqnarray}
with $z$ between $x$ and $y$. Because 
$u''(z)$ is continuous for $x,y\in [0,\pi]$, and because $1-2\al>-1$, the last integral on the right hand side converges, uniformly in $x$, to a continuous function of $x$, as $\ep\to 0$. The first integral we compute directly.
Assume for the moment that $\al\neq 1/2.$
An exercise in calculus gives
$$
\int_0^{\pi} (x-y)\chi_{U_\ep}(x,y)|x-y|^{-1-2\al}dy=
\left \{
\begin{array}{cc}
\frac{1}{1-2\al}((\pi -x)^{1-2\al}+\ep^{1-2\al}), & x<\ep,\\
\frac{1}{1-2\al}((\pi -x)^{1-2\al}+x^{1-2\al}), & \ep\leq x\leq \pi-\ep,\\
\frac{1}{1-2\al}(\ep^{1-2\al}+x^{1-2\al}), & x>\pi-\ep.
\end{array}
\right .
$$
Thus, as $\ep \to 0$, this last integral converges in $L^1(0,\pi)$ to 
$\frac{1}{1-2\al}((\pi -x)^{1-2\al}+x^{1-2\al})$, and evidently this limit is continuous in $(0,\pi)$. Furthermore, if  $w\in H^2(0,\pi)$, then  
we deduce
$$\lim_{\ep \to 0}\int_0^\pi
w(x)u'(x)\int_0^{\pi} (x-y)\chi_{U_\ep}(x,y)|x-y|^{-1-2\al}dydx$$
$$
=\int_0^\pi\lim_{\ep \to 0}
w(x)u'(x)\int_0^{\pi} (x-y)\chi_{U_\ep}(x,y)|x-y|^{-1-2\al}dydx.
$$
 The case 
$\al=1/2$ and $w\in X^2$ is handled similarly.
In summary, we have proven 
$$x\mapsto PV\int_0^{\pi} (u(x)-u(y)){\cal J}^1(x,y)dy
$$
is continuous and integrable on $(0,\pi)$, and that 
$$
\lim_{\ep\to 0}
\int_0^{\pi} \int_0^{\pi}w(x) (u(x)-u(y))\chi_{U_\ep}(x,y){\cal J}^1(x,y)dydx
=\int_0^\pi (PV\int_0^{\pi}w(x) (u(x)-u(y)){\cal J}^1(x,y)dy)dx,
$$
so \rq{A},\rq{B} hold for $m=1$.

We now analyse the contribution made by 
\begin{eqnarray*}
 2\sqrt{\pi}{\cal J}^2(x,y):
&=&  \Gamma(\al +1/2)(|x+ y|^{-2\al -1} +|x+ y-2\pi|^{-2\al -1}).
\end{eqnarray*}
We now consider the first term. We have, by Taylor's Theorem,
\begin{eqnarray}
 \int_0^{\pi} (u(x)-u(y))\chi_{U_\ep}(x,y)|x+y|^{-1-2\al}dy
& = & u'(x)\int_0^{\pi} (x-y)\chi_{U_\ep}(x,y)|x+y|^{-1-2\al}dy\nonumber\\
&  & +\int_0^{\pi} u''(z)(x-y)^2\chi_{U_\ep}(x,y)|x+y|^{-1-2\al}dy,\nonumber
\end{eqnarray}
with $z$ between $x$ and $y$.
We now bound the first integral; the easier argument for the second integral  is left to the reader.
Assume  $\al \neq 1/2$; the proof for $\al =1/2$ is similar. For fixed $x>0$, 
 by direct calculation, 
$$
\int_0^{\pi} (x-y)|x+y|^{-2\al -1}dy=
\big ( \frac{1}{\al}+\frac{1}{-2\al +1}\big )x^{-2\al+1}
-\frac{(x+\pi)^{-2\al+1}}{-2\al +1}-\frac{x(x+\pi)^{-2\al}}{\al}:=I(x).
$$
Since $I(x)$ is  $L^1(0,\pi)$, we conclude that as $\ep \to 0$,  
$$
\int_0^{\pi} \chi_{U_\ep}(x,y)(x-y)|x+y|^{-2\al -1}dy 
$$
converges in $L^1(0,\pi)$ to $I(x).$
Thus 
$$
\lim_{\ep \to 0}\int_0^{\pi}w(x)u'(x)\int_0^{\pi} \chi_{U_\ep}(x,y)(x-y)|x+y|^{-2\al -1}dydx$$
$$=\int_0^{\pi}w(x)u'(x)\lim_{\ep \to 0}\int_0^{\pi} \chi_{U_\ep}(x,y)(x-y)|x+y|^{-2\al -1}dydx,
$$
as desired. Furthermore, the function $u'(x)I(x)$ is clearly continuous on $(0,\pi)$.
The argument for $\al =1/2$ is similar. The term involving $|x+y-2\pi|^{-1-2\al}$ is handled similarly.
 This proves 
$$
x\mapsto PV \int_0^{\pi} (u(x)-u(y)){\cal J}^2(x,y)dy
$$
is continuous and integrable on $(0,\pi)$. 
Thus we have proved \rq{A},\rq{B} for $m=2.$

This completes the proof of the theorem.

\section{Well-posedness for structionally damped beam}

\subsection{Non-homogeneous Dirichlet condition.}
For this subsection, we denote $X^p_D$ by $X^p$, and the pairing $X^p-X^{-p}$ by $\< *,*\>_{X^p,X^{-p}}$.

 We propose to study the structurally damped beam equation with non-homegeneous boundary conditions, which we write formally as
\begin{eqnarray}
u_{tt}+A_D^2 u+\rho \A u_t& = & 0\label{beam'}\\
u(0,t)=u_{xx}(0,t) & = & 0\label{bc1'}\\
u(\pi,t) & = & f(t)\label{bc2'}\\ 
u_{xx}(\pi,t) & = & g(t)\label{bc3'}\\
u(x,0)=u^0(x),\ u_t(x,0)& = & u^1(x).\label{init'}
\end{eqnarray}
This  subsection, we will set $g=0$.
The adjoint problem is 
\begin{eqnarray}
w_{tt}+A_D^2 w-\rho A_D^\al w_t& = & 0\label{dbeam'}\\
w(0,t)=w_{xx}(0,t) & = & 0\label{dbc1'}\\
w(\pi ,t)=w_{xx}(\pi ,t) & = & 0\label{dbc2'}\\ 
w(x,T)=w_0(x),\ w_t(x,T)& = & w_1(x).\label{dinit'}
\end{eqnarray}
We will see below that the associated observation will be $w_{xxx}(\pi,t)$.

Let $b\in \bR$.
Standard Fourier series arguments using the asymptotics of $\la_n$ show that the mapping $(w^0,w^1)\mapsto w(x,t)$ is bounded from $X^{2+b}\times X^b$ to 
$$C^0(0,T;X^{2+b})\cap C^1(0,T,X^b),
$$
and there exists a constant $C$ independent of $(w^0,w^1)$ such that
$$
\int_0^T|w_{xxx}(\pi,t)|^2dt \leq C(\| w^0\|_{X_3}^2+\| w^1\|^2_{X^1}), \ \forall (w^0,w^1)\in W^3\times X^1.
$$
For proof, the reader is referred to a
 more general version proven in \cite{E2}.

To define the weak solution for Dirichlet control, set $g=0$, 
 and let $u$, resp. $w$, solve
\rq{beam'}-\rq{init'}, resp. \rq{dbeam'}-\rq{dinit'}.
Assume for the moment that $u\in C^2(0,T;H^4(0,\pi)),w\in C^2(0,T;X^4)$, so that integration by permissible using Theorem \ref{ibpd}. Then integration by parts yields
\begin{eqnarray*}
0&=&\int_0^T\int_0^{\pi}(w_{tt}+A^2w-\rho A^{\al}w_t)u\ dxdt\nonumber\\
&=&\int_0^{\pi}[w_tu-wu_t-\rho A^{\al}wu]_0^T dx+\int_0^Tw_{xxx}(\pi,t)f(t)dt.
\end{eqnarray*}
 Then we define $u$ to be a solution to \rq{beam'}-\rq{init'} if 
\begin{eqnarray}
0&=&\< w_t(*,T),u(*,T)\>_{X^1,X^{-1}}-\rho <A^{\al}w(*,T),u(*,T) \>_{X^1,X^{-1}}\nonumber \\
& & -\< w(*,T),u_t(*,T)\>_{X^3,X^{-3}}+\int_0^Tw_{xxx}(\pi,t)f(t)dt, \ \forall (w_0,w_1)\in X^3\times X^1.
\label{weak1}
\end{eqnarray}
We state a result proven in greater generality in \cite{E2}; for proof we refer the interested reader to that work. 
\begin{thm}\label{wpD}
Let $T>0$. Let $\al< 1$, and $\rho >0$.
Let $f\in L^2(0,T).$ Suppose $u^0=u^1=0$. Then there exists a unique 
 solution $u^f$ to the system \rq{beam'}-\rq{init'} with the 
 mapping $f\mapsto u^f$ is  continuous from $L^2(0,T)$ to $C(0,T;X^{-1})\cap C^1(0,T;X^{-3}).$
\end{thm}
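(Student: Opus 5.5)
We prove Theorem \ref{wpD} by duality and transposition, using the weak formulation \rq{weak1} and the observation bound for $w_{xxx}(\pi,t)$ stated just before the theorem. Fix $t_0\in(0,T]$ and replace $T$ by $t_0$ in the adjoint problem \rq{dbeam'}--\rq{dinit'}. Consider the map
$$
L_{t_0}:(w_0,w_1)\mapsto \int_0^{t_0} w_{xxx}(\pi,t)f(t)\,dt .
$$
By the observation estimate and Cauchy--Schwarz, $L_{t_0}$ is a bounded linear functional on $X^3\times X^1$, with norm at most $C\|f\|_{L^2(0,t_0)}$. The constant $C$ can be taken uniform in $t_0\in(0,T]$: the adjoint problem is time reversible, with frequencies $-\la_n^\pm$, and the Fourier series bound is uniform on bounded time intervals.

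\textbf{Existence.} The identity \rq{weak1} asks us to find $(u(t_0),u_t(t_0))\in X^{-1}\times X^{-3}$ such that
$$
\< w_1,u(t_0)\>-\rho\<A^\al w_0,u(t_0)\>-\<w_0,u_t(t_0)\> = -L_{t_0}(w_0,w_1).
$$
The left side is the pairing of $(w_0,w_1)$ against the pair $(-\rho A^\al u - u_t,\,u)$ under the $X^3\times X^1$ / $X^{-3}\times X^{-1}$ duality. Since $\al<1$, $A^\al$ maps $X^{-1}$ into $X^{-1-2\al}\subset X^{-3}$, so the map $(a,b)\mapsto(-\rho A^\al a-b,\,a)$ is an isomorphism of $X^{-1}\times X^{-3}$. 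Riesz representation then gives the pair uniquely, with norm $\le C\|f\|_{L^2}$. Uniqueness follows for the same reason: if $f=0$, the pairing vanishes for all $(w_0,w_1)$.

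\textbf{Time continuity.} Rather than arguing abstractly, I would make everything explicit in the eigenbasis. Testing \rq{weak1} with $w_0=\f_n$, $w_1=0$ and with $w_0=0$, $w_1=\f_n$ shows that each coefficient $u_n(t)=\<u(t),\f_n\>$ is a Duhamel integral of the form
$$
u_n(t)=c\,n\int_0^t k_n(t-s)f(s)\,ds,
$$
where the kernel $k_n$ is a combination of $e^{\la_n^\pm (t-s)}$, normalized by $\la_n^+-\la_n^-$. The factor $n$ comes from $\f_n'''(\pi)\sim n^3$. Continuity of each $u_n$ is clear.

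It remains to get uniform convergence of the series in $C(0,T;X^{-1})$, which is the main obstacle. Here I would use the Bessel property of Lemma \ref{L2}, applied to the time-reversed functions on $(0,t)$ with $t\le T$, together with the asymptotics $\Re\la_n^\pm\sim -n^{2\al}$ and $|\la_n^\pm|\sim n^2$ (and $|\la_n^+-\la_n^-|\sim n^2$). These give
$$
\sum_n n^{-2}|u_n(t)|^2\le C\|f\|^2_{L^2},
$$
with the analogous bound for $u_t$ in $X^{-3}$. Approximating $f$ by smooth functions, for which the solutions are classical, and passing to the limit then yields continuity. The case where $\la_n^+=\la_n^-$ for finitely many $n$ is handled with Lemma \ref{L2}B.
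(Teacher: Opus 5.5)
Your proposal is correct and follows essentially the same route as the paper. The paper defers the proof of Theorem \ref{wpD} to \cite{E2}, but its own proof of the Neumann analogue (Theorem \ref{wellposedN}, Part A) is exactly your argument: the observation estimate makes $(w_0,w_1)\mapsto\int_0^{t_0} w_{xxx}(\pi,t)f(t)\,dt$ bounded on $X^3\times X^1$, Riesz representation gives existence and uniqueness of $(u(t_0),u_t(t_0))\in X^{-1}\times X^{-3}$, and time regularity comes from Fourier series; your Duhamel/Bessel bound $\sum_n n^{-2}|u_n(t)|^2\le C\|f\|^2_{L^2}$, uniform in $t$, together with the density argument, supplies the details of that last step, which the paper leaves to the reader.
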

\begin{remark}
It is unclear to us how to define a solution to the system \rq{beam'}-\rq{init'} for $\al \in (1,2)$ for non-homogeneous Dirichlet boundary conditions.
\end{remark}

We now present an example that serves two purposes. First, it uses the important result from \cite{E2}  that if $u\in H^3(0,\pi)$, then 
as elements of $X^{-2\al}_D$, $\A u=A^\al u$. Consequently it is easy to compute the Fourier series of $u$: if $u=\sum u_n\f_n$, then $\A u=\sum n^{2\al}u_n\f_n$. 
The example also
 shows that the set of weak solutions $u^f$ to \rq{beam'}-\rq{init'} for which integration by parts is permissible is non-trivial.

{\bf Example}\

We adapt the harmonic lifting argument used in \cite{T1}, \cite{AE}, but with $\al <1$, to convert our problem into a problem with  homogeneous boundary conditions . 
	Let $U(x,t)=\frac{x}{\pi}f(t)$, and $v(x,t)=u(x,t)-U(x,t).$
	Then $v=v^f$ satisfies
	\begin{eqnarray}
		v_{tt}+A_D^2 v+\rho A_D^\al v_t & = & -\frac{x}{\pi}f''(t)-\frac{\rho} {\pi}f'(t)\A x,\label{v1}\\
		v(0,t) =v(\pi ,t)=v_{xx}(0,t)=v_{xx}(\pi ,t)& = & 0\\
		v(x,0) & =& u_0(x)-\frac{x}{\pi}f(0)\\
		v_t(x,0) & = & u_1(x)-\frac{x}{\pi}f'(0).\label{v4}
	\end{eqnarray}
Assume $f\in C^{\infty}(0,T)$ with $f(0)=f'(0)=0$, and $u^0=u^1=0.$ Evidently, $U\in C^\infty(0,T,H^4(0,\pi)).$

We now obtain a formula for $v$. First, we note the following Fourier expansion
$$
\frac{x}{\pi}=\sum_1 x_m\f_m,\ 
x_m=\sqrt{\frac{2}{\pi}}
\frac{(-1)^{m+1}}{m}.
$$ 
We now compute the Fourier series for $\A x$. First, observe that since $x\in X^0$, it follows from spectral theory that $A_D^\al x\in X^{-2\al}$. Since the function $u=x$ satisfies the hypothesis necessary in Theorem \ref{ibpd}, 
it was proven (\cite{E2}, Corollary 1) that, as objects in $X^{-2\al}$, we have $\A x=A_D^\al x$. Thus we have  
\begin{eqnarray*}
\< \A \frac{x}{\pi},\f_n\>_{X^{-2\al},X^{2\al}} & = & \< A^\al_D\frac{x}{\pi},\f_n\>_{X^{-2\al},X^{2\al}}\\
& = & \<  \frac{x}{\pi},A^\al_D \f_n\>_{X^{0},X^{0}}\\
& = & n^{2\alpha}x_n=: y_n.
\end{eqnarray*}

Set $v^f=\sum a_n(t)\f_n(x)$. Then 
\rq{v1} implies
$$
\sum (a_n''+\rho a_n'n^{2\al}+a_nn^4   )\f_n(x)=-\sum [f''(t)x_n+\rho f'(t)y_n]\f_n(x),
$$
and since $f\in C_c^\infty(0,T)$,
\beq
a_n''+\rho n^{2\al}a_n'+n^4a_n=-f''(t)x_n-\rho f'(t)y_n,\ a_n(0)=a_n'(t)=0,\ \forall n\in \bN.\label{an1}
\eeq
We set 
$$
\beta_n=\frac{-\rho n^{2\al}}{2}, \ 
\al_n=\frac{\sqrt{4n^4-\rho^2n^{4\al}}}{2},\mbox{ so }
\la_n^{\pm}=\beta_n\pm i\al_n.
$$
We solve \rq{an1} using 
variation of parameters. 
We have the  Wronskian equalling
$-2i\al_ne^{-2\beta_nt}$, 
hence
\begin{eqnarray*}
a_n(t)
& =& \frac{1}{2i\al_n}
\int_0^t\big ( f''(s)x_n+\rho f'(s)y_n\big )
(e^{\la_n^+(t-s)}-e^{\la_n^-(t-s)})ds,
\ n\in \bN .
\end{eqnarray*}
In integration by parts argument now shows that 
$$
v^f\in C^\infty (0,T;X^4)),
$$
and hence the same regularity will hold for $u^f=v^f+U$ as desired.

\subsection{Moment control}

We now discuss the well-posedness for moment control, so $f=0$ in 
\rq{beam'}-\rq{init'}. For this subsection, we denote $X^p_D$ by $X^p$, and the pairing $X^p-X^{-p}$ by $\< *,*\>_{X^p,X^{-p}}$.

In \cite{E2}, we show there exists a constant $C$ independent of $(w^0,w^1)$ such that
$$
\int_0^T|w_{x}(\pi,t)|^2dt \leq C(\| w^0\|_{X_1}^2+\| w^1\|^2_{X^{-1}}), \ \forall (w^0,w^1)\in X^3\times X^1. 
$$

Assume the solution, 
$u$, is sufficiently regular to permit application of Theorem \ref{ibpd}.
Then
integration by parts in this case gives
$$
0=\int_0^{\pi}[w_tu-wu_t-A_D^{\al}wu]_0^T dx-\int_0^Tw_{x}(\pi,t)g(t)dt.
$$
We define $u$ to be the weak solution to \rq{beam'}-\rq{init} if, for all $(w^0,w^1)\in X^1\times X^{-1}$, 
\beq
0=\< w^1-\Delta^\al w^0,u(*,T)\>_{X^{-1},X^{1}}-\< w^0,u_t(*,T)\>_{X^1,X^{-1}}-\int_0^T w_x(\pi ,t)g(t)dt .\label{wm0}
\eeq
We recall from \cite{E2}:
\begin{thm}\label{wpM}
Let $T>0$. Assume $f=0$.
Let $g\in L^2(0,T).$  Then there exists a unique 
 solution $u^g$ to \rq{beam'}-\rq{init} with the 
 mapping $g\mapsto u^g$ is  continuous from $L^2(0,T)$ to $C(0,T;X^{1})\cap C^1(0,T;X^{-1}).$
\end{thm}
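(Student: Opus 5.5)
The plan is a duality (transposition) argument built on the observability estimate for the adjoint problem, which is the approach used for Theorems \ref{wpD} and \ref{wpM}. With $g=0$ the adjoint system is
\[
w_{tt}+A_D^2w-\rho A_D^\al w_t=0,\qquad w=w_{xx}=0 \mbox{ at } x=0,\pi,\qquad (w,w_t)(\cdot,T)=(w^0,w^1).
\]
I will expand the adjoint solution in the sine basis,
\[
w=\sum_n \big(c_n^+e^{-\la_n^+(T-t)}+c_n^-e^{-\la_n^-(T-t)}\big)\f_n ,
\]
where the coefficients $c_n^\pm$ are determined by $(w^0,w^1)$. Because $|\la_n^\pm|\sim n^2$, the map $(w^0,w^1)\mapsto (c_n^\pm)$ satisfies $\sum n^2(|c_n^+|^2+|c_n^-|^2)\lesssim \|w^0\|_{X^1}^2+\|w^1\|_{X^{-1}}^2$. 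The observation is
\[
w_x(\pi,t)=\sum_n \sqrt{2/\pi}\,(-1)^n n\big(c_n^+e^{\cdots}+c_n^-e^{\cdots}\big).
\]
Lemma \ref{L2}, the Bessel property, applied to the reflected exponentials (with the $te^{\la t}$ modification at any double root) then gives
\[
\int_0^T|w_x(\pi,t)|^2\,dt\le C\big(\|w^0\|_{X^1}^2+\|w^1\|_{X^{-1}}^2\big).
\]
This is the admissibility estimate recalled above, and it is the key input.

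The next step defines the solution. For fixed $t\in(0,T]$, with $T$ replaced by $t$ in \rq{wm0}, the right-hand side $g\mapsto\int_0^t w_x(\pi,s)g(s)\,ds$ is a bounded linear functional of $(w^0,w^1)\in X^1\times X^{-1}$, with norm at most $C\|g\|_{L^2}$. I will rewrite \rq{wm0} as $\langle \mathcal{M}(w^0,w^1),(u(t),u_t(t))\rangle=\ell_g(w^0,w^1)$. Here $\mathcal{M}(w^0,w^1)=(w^1-\rho A^\al w^0,\,w^0)$ is a triangular isomorphism of $X^1\times X^{-1}$, since $A^\al w^0\in X^{1-2\al}\subset X^{-1}$ for $\al<1$. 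Riesz representation then gives a unique pair $(u(t),u_t(t))\in X^1\times X^{-1}$ with norm at most $C\|g\|_{L^2(0,T)}$, uniformly in $t$. This simultaneously proves existence, uniqueness and the bound. Consistency with smooth solutions follows from the integration-by-parts identity preceding \rq{wm0}, which uses Theorem \ref{ibpd}. For this I take $g$ smooth with compact support, where a lifting as in the Example yields classical solutions.

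Continuity in time is the remaining point. I will verify it first for $g\in C_c^\infty(0,T)$, where the spectral formula for the solution shows that $u\in C([0,T];X^1)\cap C^1([0,T];X^{-1})$. I will then pass to general $g\in L^2$ by density, using the uniform bound from the previous paragraph: a uniform limit of continuous functions is continuous. Concretely, the coefficient of $\f_n$ in $u$ is
\[
a_n(t)=\frac{\sqrt{2/\pi}(-1)^{n+1}n}{2i\al_n}\int_0^t g(s)\big(e^{\la_n^+(t-s)}-e^{\la_n^-(t-s)}\big)\,ds .
\]
The uniform-in-$t$ estimate of $\sum n^2|a_n(t)|^2$ is again exactly the dual form of the Bessel inequality on $(0,t)$. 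I expect the main obstacle to be this uniformity in $t$, together with the finitely many $n$ where $\la_n^+=\la_n^-$ or $\al_n$ is small. The finitely many resonant modes will be treated separately by the explicit $te^{\la t}$ formula, and they contribute only a finite-dimensional continuous piece.
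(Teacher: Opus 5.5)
Your proposal is correct and follows essentially the paper's route. The paper defers the details to \cite{E2}, but the mechanism is the one it uses for Theorem \ref{wellposedN}, part A: admissibility of $w_x(\pi,\cdot)$ from the Bessel property of Lemma \ref{L2}, a duality argument in $X^1\times X^{-1}$ based on \rq{wm0} for existence and uniqueness, and continuity in $t$ from the Fourier (Duhamel) representation. One slip to fix: the adjoint modes are $e^{\la_n^{\pm}(T-t)}$, not $e^{-\la_n^{\pm}(T-t)}$, since with your sign they grow like $e^{\rho n^{2\al}(T-t)/2}$ and the Bessel bound fails; also, $\mathcal{M}$ is an isomorphism of $X^1\times X^{-1}$ onto $X^{-1}\times X^1$, not of $X^1\times X^{-1}$ to itself.
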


\subsection{Non-homogeneous Neumann condition.}
Let $p\in \bR$. We define the norms on $X^p_N,X^{p}_{N,0}$. Let $\{ \f_n(x): \ n\geq 0\}$ be the normalized eigenfunctions of $A_N$. Then 
$$
\| \sum_{n=0}^\infty a_n\f_n\|^2_{X^p_N}=|a_0|^2+\sum_{n=1}^\infty n^{2p}|a_n|^2,
\ \ \
\| \sum_{n=1}^\infty a_n\f_n\|^2_{X^p_{N,0}}=\sum_{n=1}^\infty n^{2p}|a_n|^2.
$$

We first consider the system 
\begin{eqnarray}
u_{tt}+A_N^2 u+\rho \tilde{A}_N^\al u_t& = & 0\label{beamn}\\
u_x(0,t)=u_{xxx}(0,t) & = & 0\label{bc1n}\\
u_x(\pi,t) & = & f(t)\label{bc2n}\\ 
u_{xxx}(\pi,t) & = & g(t)\label{bc3n}\\
u(x,0)=u^0(x),\ u_t(x,0)& = & u^1(x)\label{initn}
\end{eqnarray}
with $g=0$.
We recall the adjoint problem:
\begin{eqnarray}
w_{tt}+A_N^2 w-\rho A_N^\al w_t& = & 0\label{dbeamn}\\
w_x(0,t)=w_{xxx}(0,t) & = & 0\label{dbc1n}\\
w_x(\pi ,t)=w_{xxx}(\pi ,t) & = & 0\label{dbc2n}\\ 
w(x,T)=w^0(x),\ w_t(x,T)& = & w^1(x).\label{dinitn}
\end{eqnarray}
We will see the associated observation will be $w_{xx}(\pi,t)$.
\begin{lemma}\label{adjnreg} Let $T>0$. For any $\rho >0$,
\

A)  The mapping $(w^0,w^1)\mapsto w$ is a continuous map 
$$X_N^{2+b}\times X_N^{b}\mapsto C(0,T;X_N^{2+b})\cap C^1(0,T; X_N^{b}).
$$ 

B) The mapping $(w^0,w^1)\mapsto w$ is a continuous map 
$$X_{N,0}^{2+b}\times X_{N,0}^{b}\mapsto C(0,T;X_{N,0}^{2+b})\cap C^1(0,T; X_{N,0}^{b}).
$$ 

C) There exists a constant independent of $(w^0,w^1)\in X^2_N\times X^0_N$ such that 
$$
\int_{t=0}^T|w_{xx}(\pi ,t)|^2dt\leq C(
\| w^0\|^2_{X^2_N}+\| w^1\|^2_{X^0_N}).
$$

\end{lemma}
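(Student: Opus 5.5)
Expand the solution of \rq{dbeamn}--\rq{dinitn} in the Neumann eigenbasis $\{\f_n\}_{n\ge 0}$. Writing $w=\sum_{n\ge0} a_n(t)\f_n(x)$, each coefficient solves $a_n''-\rho n^{2\al}a_n'+n^4a_n=0$ with data at $t=T$. For $n\ge1$ the roots are $-\la_n^\pm$ (the reflected frequencies of \rq{pm}). For $n=0$ we have $a_0''=0$, so $a_0(t)=a_0(T)+(t-T)a_0'(T)$, which is trivially bounded on $[0,T]$. For the finitely many $n$ with a double root we use the $te^{\la t}$ form. Since the backward equation in $t$ is the forward damped equation in $s=T-t$, we set $b_n(s)=a_n(T-s)$. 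Then $b_n''+\rho n^{2\al}b_n'+n^4b_n=0$, a dissipative oscillator.

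For parts A and B, the key is a uniform estimate for this oscillator. With the energy $E_n(s)=n^4|b_n|^2+|b_n'|^2$ we get $E_n'(s)=-2\rho n^{2\al}|b_n'|^2\le0$, so $E_n(s)\le E_n(0)$ for all $s\ge0$ and every $n\ge1$. This gives $n^{2(2+b)}|a_n(t)|^2+n^{2b}|a_n'(t)|^2\le n^{2(2+b)}|a_n(T)|^2+n^{2b}|a_n'(T)|^2$ after multiplying by $n^{2b}$. Summing over $n$, together with the explicit $n=0$ term, gives boundedness in $X_N^{2+b}\times X_N^b$. Continuity in $t$ follows by dominated convergence on the tail of the series, since each $a_n$ is smooth. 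Part B is the same argument restricted to $n\ge1$, since the equation preserves the zero-mean subspace (the coefficient $a_0$ decouples). The energy argument avoids treating the near-double roots separately.

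For part C, $\f_n''(\pi)=-n^2\f_n(\pi)=\pm n^2/\sqrt{\pi/2}$ up to the normalisation constant, and $\f_0''=0$. Hence $w_{xx}(\pi,t)=c\sum_{n\ge1}(-1)^{n+1}n^2a_n(t)$. We write $a_n(t)=c_n^+e^{-\la_n^+(t-T)}+c_n^-e^{-\la_n^-(t-T)}$, or in the $t$-polynomial form at double roots, and in the variable $s$ these are the exponentials $e^{\la_n^\pm s}$. Lemma~\ref{L2}, extended to the Neumann case as noted in Section 2.2, gives that this family is Bessel in $L^2(0,T)$. Therefore
$$\int_0^T|w_{xx}(\pi,t)|^2dt\le C\sum_{n\ge1}n^4(|c_n^+|^2+|c_n^-|^2).$$

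The main obstacle is bounding $n^4(|c_n^+|^2+|c_n^-|^2)$ by $n^4|a_n(T)|^2+|a_n'(T)|^2$. Solving the $2\times2$ system gives $c_n^\pm=\big(\la_n^\mp a_n(T)-a_n'(T)\big)/(\la_n^\mp-\la_n^\pm)$, up to the sign convention. Since $|\la_n^\pm|\lesssim n^2$, this requires $|\la_n^+-\la_n^-|=\sqrt{|\rho^2n^{4\al}-4n^4|}\gtrsim n^2$. That holds for large $n$ because $\al<1$, but fails for the finitely many $n$ near a double root, where the coefficients blow up.

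At those indices we avoid the exponential splitting. The finitely many low modes contribute a term $n^2a_n(t)$ that is bounded pointwise by the energy estimate of part A, so their $L^2(0,T)$ norm is controlled directly by $C(n^4|a_n(T)|^2+|a_n'(T)|^2)$. We then apply the Bessel property only to the tail $n\ge N_0$, which is a subfamily of a Bessel family and hence still Bessel. Combining the low and high modes yields C.
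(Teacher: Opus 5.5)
Your proof is correct, but for parts A and B you take a different route from the paper. The paper writes $w=\sum (c_n^+e^{\la_n^+(T-t)}+c_n^-e^{\la_n^-(T-t)})\f_n$ with $c_n^\pm$ given by \rq{cn+1}--\rq{cn-1}. It then bounds the two exponential terms separately, using $|\la_n^\pm|\le Cn^2$, $\la_n^+\la_n^-=n^4$, and implicitly $|q_n|\ge cn^2$ for every $n$. You instead use the modal energy identity $E_n'=-2\rho n^{2\al}|b_n'|^2\le 0$.

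Your route gives uniformity. It never divides by $q_n=\la_n^+-\la_n^-$, so it is insensitive to double or near-double roots. At those roots the paper's formulas for $c_n^\pm$ are undefined or carry large constants, and estimating the two exponentials separately discards the cancellation between them. You also need neither the Bessel property (which the paper invokes in A, although its bound there is pointwise in $t$) nor $\al<1$. So you genuinely obtain A and B for every $\rho>0$, as stated. The paper's route is more explicit, and its coefficient formulas are reused in part C and in the later observability estimates.

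For part C, which the paper leaves to the reader, the intended argument is the Bessel-plus-coefficients one used for the shear-control lemma. You follow it for the high modes and add a low-mode step controlled by your energy bound, which closes exactly the degenerate-root gap. Both versions of C still rely on $\al<1$, through Lemma \ref{L2} and $|q_n|\gtrsim n^2$ for large $n$.
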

We remark that part C also holds if $X^2_N\times X^0_N$ is replaced by $X^2_{N,0}\times X^0_{N,0}$.

Proof of part A: 
 Assume $(w^0,w^1)\in X_N^{2+b}\times X_N^{b}$.
Recall
$$
w(x,t)=c_0^++c_0^-(T-t)+\sum_1^\infty (c_n^+e^{\la_n^+(T-t) }+c_n^-e^{\la_n^-(T- t)})\f_n(x),
$$
with $c_0^+=w^0_0$, $c^-_0=-w^1_0$, and 
\begin{eqnarray}
c_n^+=&(-w_n^1-\la_n^-w_n^0)/\sqrt{\rho^2n^{4\al}-4n^4}=:& (-w_n^1-\la_n^-w_n^0)/q_n,\label{cn+1}\\
c_n^-=&(w_n^1+\la_n^+w_n^0)/\sqrt{\rho^2n^{4\al}-4n^4}
=:&(w_n^1+\la_n^+w_n^0)/q_n.
\label{cn-1}
\end{eqnarray}
We now  prove the regularity of $w_t(*,t)$. It follows from the results in Section \ref{2.1}
that $\{ 1,t, e^{\la_n^+ t},e^{\la_n^- t},n\in \bN \}$ forms a Bessel sequence in $L^2(0,T)$. Hence
\begin{eqnarray*}
\| w_t(*,t)\|_{X^b}^2 & = & |c_0^-|^2+
\sum_1^\infty n^{2b}|\la_n^+c_n^+e^{\la_n^+ t}+c_n^-\la_n^-e^{\la_n^- t}|^2\\
& = & |c_0^-|^2+\sum_1^\infty \frac{n^{2b}}{q_n^2}
|\la_n^+(-w_n^1-\la_n^-w^0_n)e^{\la_n^+t}+
\la_n^-(w_n^1+\la_n^+w^0_n)e^{\la_n^-t}|^2\\
& \leq &|w^1_0|^2+ C\sum_1^\infty \frac{n^{2b}}{q_n^2}
(|\la_n^-w_n^1|^2+|\la_n^+\la_n^-w^0_n|^2)\\
& \leq &|w^1_0|^2+ C\sum_1^\infty \frac{n^{2b}}{q_n^2}
(|n^{2}w_n^1|^2+|n^4w^0_n|^2)\\
& \leq &|w^1_0|^2+ C\sum_1^\infty {n^{2b}}
(|w_n^1|^2+|n^2w^0_n|^2) \\
& \leq & C (\| w^0\|_{X^{2+b}_N}^2+\| w^1\|^2_{X^b_N}).
\end{eqnarray*}
Here $C$ is some positive constant depending only on $\al, \rho$, and we used  the formula $\la_n^+\la_n^-=n^4$.
Because the series converges uniformly in $t$, we have
$w_t\in C(0,T;X^b).$
The fact that $w\in C(0,T;X^{2+b})$ can be proved similarly.

The similar proofs of part B,C is left to the reader. 
$\Box$

\

Let $u$, resp. $w$, solve
\rq{beamn}-\rq{initn}, resp. \rq{dbeamn}-\rq{dinitn}.
Assume for the moment that $u\in C^2(0,T;H^4),w\in C^2(0,T;X_N^6)$, so that integration by permissible using Proposition \ref{ibpn}. Then integration by parts yields
\begin{eqnarray*}
0&=&\int_0^T\int_0^{\pi}(w_{tt}+A_N^2w-\rho A_N^{\al}w_t)u\ dxdt\\
&=&\int_0^{\pi}[w_tu-wu_t-\rho A_N^{\al}wu]_0^T dx-\int_0^Tw_{xx}(\pi,t)f(t)dt.
\end{eqnarray*}
 Then we define $u$ to be a solution to \rq{beam'}-\rq{init'} if 
\begin{eqnarray}
0&=&\< w_t(*,T),u(*,T)\>_{X^0_N,X^{0}_N}-\rho \<A_N^{\al}w(*,T),u(*,T)\>_{X^0_N,X^{0}_N}
\nonumber \\
& & +\< w(*,T),u_t(*,T)\>_{X^2_N,X^{-2}_N}+\int_0^Tw_{xx}(\pi,t)f(t)dt, \ \forall (w^0,w^1)\in X^2_N\times X^0_N.\label{weakN1}
\end{eqnarray}

{\bf Proof of Theorem \ref{wellposedN}, Part A.}
We  use a standard duality argument: fix $T>0$ and  $f\in L^2(0,T).$ Then, by Lemma \ref{adjnreg} part C, the mapping 
$$
X^2_N\times X^0_N\ni (w^0,w^1)\mapsto \int_0^Tw_{xx}(\pi,t)f(t)dt
$$
is a bounded linear functional. Setting this functional equal to 
$$
(w^0,w^1)\mapsto -\< \big ( w_t(*,T)-\rho A^{\al}w(*,T)\big ),u(*,T)\>_{X^0_N,X^{0}_N}-
 \< w(*,T),u_t(*,T)\>_{X^2_N,X^{-2}_N}\ ,
$$
by duality there exists a unique pair $(u(*,T),u_t(*,T))\in X^{0}_N\times X^{-2}_N$ such that \rq{weakN1} holds. This proves existence and uniqueness. The continuity properties in $t$ can now be proved using Fourier series; the details are left to the reader.
$\Box$

We conclude this section by remarking that Theorem \ref{wellposedN}, part A is easily shown to still hold if 
$X^{0}_N\times X^{-2}_N$ is replaced by 
$X^{0}_{N,0}\times X^{-2}_{N,0}$.
We note the definition of weak solution in this case:
\begin{eqnarray}
0&=&\< w_t(*,T),u(*,T)\>_{X^0_{N,0},X^{0}_{N,0}}-\<A_N^{\al}w(*,T),u(*,T)\>_{X^0_{N,0},X^{0}_{N,0}}
\nonumber \\
& & +\< w(*,T),u_t(*,T)\>_{X^2_{N,0},X^{-2}_{N,0}}+\int_0^Tw_{xx}(\pi,t)f(t)dt,\nonumber\\
&&\ \forall (w^0,w^1)\in X^2_{N,0}\times X^0_{N,0}.
\label{weakN}
\end{eqnarray}

\subsection{Non-homogeneous $u_{xxx}(\pi,t)$}
In this subsection, we prove part B of Theorem \ref{wellposedN}.
Thus, we consider:
\begin{eqnarray}
u_{tt}+A_N^2 u+\rho \tilde{A}_N^\al u_t& = & 0\label{beamt}\\
u_x(0,t)=u_{xxx}(0,t) & = & 0\label{bc1t}\\
u_x(\pi,t) & = & 0\label{bc2t}\\ 
u_{xxx}(\pi,t) & = & g(t)\label{bc3t}\\
u(x,0)=u^0(x),\ u_t(x,0)& = & u^1(x),\label{initt}
\end{eqnarray}
whose adjoint problem is given by \rq{dbeamn}-\rq{dinitn}.

Integration by parts in this case gives
$$
0=\int_0^{\pi}[w_tu-wu_t-A_N^{\al}wu]_0^T dx+\int_0^Tw(\pi,t)g(t)dt.
$$
Setting $u^0=u^1=0$, we 
define $u$ to be the weak solution to \rq{beamt}-\rq{initt} if for all $(w^0,w^1)\in 
X^{0}_N\times X^{-2}_N$,
\beq 
0=\< w^1-A_N^\al w^0,u(*,T)\>_{X^{-2}_N,X^{2}_N}-\< w^0,u_t(*,T)\>_{X^0_N,X^{0}_N}+\int_0^T w(\pi ,t)g(t)dt.\label{weaks}
\eeq
The statement of part B follows now from the following lemma:
\begin{lemma}
Let $T>0.$ There exists a positive constant depending only on $\al,\rho $ such that the following holds: 
$$
\int_0^T|w(\pi,t)|^2dt\leq C(\| w^0\|_{X^0_N}^2+\| w^1\|_{X^{-2}_N}^2).
$$
\end{lemma}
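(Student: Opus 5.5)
We prove the estimate by expanding $w$ in the Neumann eigenfunctions, exactly as in the proof of Lemma \ref{adjnreg}, and then using the Bessel property of the exponential family (Lemma \ref{L2} and its Neumann analogue). The estimate is first proved for finite sums and then extended by density. Write $w^0=\sum w^0_n\f_n$ and $w^1=\sum w^1_n\f_n$. By Lemma \ref{adjnreg} we have
$$
w(x,t)=c_0^++c_0^-(T-t)+\sum_{n\ge1}\big(c_n^+e^{\la_n^+(T-t)}+c_n^-e^{\la_n^-(T-t)}\big)\f_n(x),
$$
with $c_n^\pm$ given by \rq{cn+1}, \rq{cn-1}. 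Since $\f_n(\pi)=(-1)^n/\sqrt{2\pi}$ is bounded uniformly in $n$, evaluating at $x=\pi$ gives
$$
w(\pi,t)=\frac{1}{\sqrt{\pi}}\big(c_0^++c_0^-(T-t)\big)+\sum_{n\ge 1}\frac{(-1)^n}{\sqrt{2\pi}}\big(c_n^+e^{\la_n^+(T-t)}+c_n^-e^{\la_n^-(T-t)}\big).
$$
The change of variables $t\mapsto T-t$ preserves $L^2(0,T)$ norms. By the Bessel property of $\{1,t,e^{\la_n^\pm t}\}$ in $L^2(0,T)$, we obtain
$$
\int_0^T|w(\pi,t)|^2dt\le C\Big(|c_0^+|^2+|c_0^-|^2+\sum_{n\ge1}|c_n^+|^2+|c_n^-|^2\Big).
$$

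It remains to bound the coefficients by the norms on the right. For the zero mode, $|c_0^+|^2+|c_0^-|^2=|w^0_0|^2+|w^1_0|^2$. For $n\ge1$, note that $|\la_n^\pm|\le n^2$ in the complex case, since $|\la_n^\pm|^2=\la_n^+\la_n^-=n^4$. In the real case $\la_n^\pm$ are real, negative, and satisfy $|\la_n^-|\le\rho n^{2\al}$ and $|\la_n^+|=n^4/|\la_n^-|$. In both cases, for all large $n$ we have $|\la_n^\pm|\le Cn^2$ (using $\al<1$) and $|q_n|\ge cn^2$. Hence
$$
|c_n^\pm|^2\le \frac{C}{n^4}\big(|w^1_n|^2+n^4|w^0_n|^2\big)=C\big(n^{-4}|w^1_n|^2+|w^0_n|^2\big).
$$
Summing over $n$ gives $C(\|w^0\|^2_{X^0_N}+\|w^1\|^2_{X^{-2}_N})$.

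The main obstacle is the finitely many indices at which $q_n$ is small or vanishes, i.e.\ $\la_n^+=\la_n^-$ or nearly so. For $\al<1$ there are only finitely many such $n$, since $\rho^2n^{4\al}-4n^4\to-\infty$.
\begin{itemize}
\item If $q_n\neq0$ but is small, the bound on $c_n^\pm$ holds with an $n$-dependent constant, which is harmless for finitely many $n$.
\item If $q_n=0$, we replace that term by $(c_n^++c_n^-(T-t))e^{\la_n^+(T-t)}$ as in Section \ref{2.1}. We solve for these coefficients from $w^0_n,w^1_n$, which gives $|c_n^\pm|\le C(|w^0_n|+|w^1_n|)$, and use part B of Lemma \ref{L2} for the Bessel property with the $te^{\la t}$ terms.
\end{itemize}
Since only finitely many modes are involved, these constants are absorbed into $C$, which depends only on $\al,\rho$ (and $T$ through the Bessel constant). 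Finally, the estimate for finite sums extends by density to all $(w^0,w^1)\in X^0_N\times X^{-2}_N$. This justifies the duality argument for \rq{weaks}, exactly as in the proof of Part A.
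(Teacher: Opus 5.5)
Your proposal is correct and takes the same approach as the paper's proof: expand $w(\pi,t)$ in the Neumann modes, use the Bessel property of $\{1,t,e^{\la_n^\pm t}\}$ in $L^2(0,T)$, and bound the coefficients via \rq{cn+1} and \rq{cn-1}. Your version is more careful than the paper's, which writes the last step as an equality and does not treat the finitely many degenerate or near-degenerate modes separately; note that, like the paper's argument (which relies on Lemma \ref{L2}, stated only for $\al\in(0,1)$), yours uses $\al<1$, so the endpoint $\al=1$ allowed in part B of Theorem \ref{wellposedN} is not covered as written.
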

Proof: 
 It follows from the results in Section \ref{2.1}
that $\{ 1,t, e^{\la_n^+ t},e^{\la_n^- t},n\in \bN \}$ forms a Bessel sequence in $L^2(0,T)$. It follows from this, and then by 
\rq{cn+1},\rq{cn-1}, that
\begin{eqnarray*}
\int_0^T|w(\pi ,t)|^2dt & = & \int_0^T|c_0^++c_0^-(T-t)+\sum_1^\infty(-1)^n(c_n^+e^{\la_n^+(T-t)}+c_n^-e^{\la_n^-(T-t)}|^2\ dt\\
& \leq & C\sum_{n=0}^\infty (|c_n^+|^2+|c_n^-|^2)\\
& = & C(\| w^0\|_{X^0}^2+\|w^1\|^2_{X^{-2}}).\Box
\end{eqnarray*}

\

\section{Null Controllability results}\label{five}
\subsection{Proof of Theorem \ref{bcD}}
For this subsection, for readability, we drop the subscripts and write $X^p_D$ as $X^p$.

\begin{eqnarray}
u_{tt}+A_D^2 u+\rho \A u_t& = & 0\label{beam9c}\\
u(0,t)=u_{xx}(0,t) & = & 0\label{bc9c}\\
u(\pi,t) & = & f(t)\label{bc19c}\\ 
u_{xx}(\pi,t) & = &0,\label{bc39c}\\
u(x,0)=u^0(x),\ u_t(x,0)& = & u^1(x).\label{init9c}
\end{eqnarray}

Let $w$ be the solution of the adjoint equation.
By \rq{weak1} we have 
\begin{eqnarray*}
0&=&[\< w_t(*,t),u(*,t)\>_{X^1,X^{-1}}-\<A^{\al}w(*,t),u(*,t) \>_{X^1,X^{-1}}]^T_0\nonumber \\
& & +[\< w(*,T),u_t(*,T)\>_{X^3,X^{-3}}]_0^T+\int_0^Tw_{xxx}(\pi,t)f(t)dt, \ \forall (w_0,w_1)\in X^3\times X^1, 
\label{weak1*}
\end{eqnarray*}
so $f$ being a null control would be equivalent to 
\begin{eqnarray}
0&=&-\< w_t(*,0),u(*,0)\>_{X^1,X^{-1}}+\<A^{\al}w(*,0),u(*,0) \>_{X^1,X^{-1}}\nonumber \\
& & -[\< w(*,0),u_t(*,0)\>_{X^3,X^{-3}}+\int_0^Tw_{xxx}(\pi,t)f(t)dt, \ \forall (w_0,w_1)\in X^3\times X^1. 
\label{weak1*c}
\end{eqnarray}
Recall, 
$$w(x,t)=\sum(c_n^+e^{\la_n^+(T-t)}+c_n^-e^{\la_n^-(T-t)})\f_n(x),
$$
so 
$$
w_{xxx}(\pi,t)=-\sum_n n^2\f_n'(\pi)(c_n^+e^{\la_n^+(T-t)}+c_n^-e^{\la_n^-(T-t)})
$$

By a standard duality argument applied to \rq{weak1}, null-controllability holds if and only if the following observability estimate holds
\beq \label{ob1a}
\| w_{xxx}(0,*)\|^2_{L^2(0,T)}\geq C(\| w(*,0)\|_{X^3}^2+\| w_t(*,0)\|_{X^1}^2), \forall (w^0,w^1)\in X^3\times X^1.
\eeq 
\begin{prop}\label{propd1}
Let $\al < 1, \rho <2.$ Then \rq{ob1a} holds. 
\end{prop}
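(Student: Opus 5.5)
We will prove \rq{ob1a} by expanding the observation in the exponential family and applying the complex window estimate, Theorem \ref{win}. Write $s=T-t$. Since $\f_n'(\pi)=\sqrt{2/\pi}\,n(-1)^n$, the observation is
$$w_{xxx}(\pi,t)=-\sqrt{2/\pi}\sum_n (-1)^n n^3\big(c_n^+e^{\la_n^+ s}+c_n^-e^{\la_n^- s}\big).$$
(The norm on the left of \rq{ob1a} should be read at $x=\pi$, the observed endpoint.) With $\la_k$ as in \rq{La}, this is a series $\sum_{k\in\bK} d_k e^{i\la_k s}$, where $d_{\pm n}=-\sqrt{2/\pi}(-1)^n n^3 c_n^\pm$. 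Because $\rho<2$, Lemma \ref{separat}(1) shows that $\La$ is separable. Moreover $\rho^2n^{4\al}-4n^4<0$ for every $n$ when $\al<1$, so $\la_n^+\ne\la_n^-$ for all $n$ and no $te^{\la t}$ terms arise. We then apply Theorem \ref{win} on $(0,T)$ with some $T'>0$ (say $T'=T$), taking only the coefficients with $n=0$ and $K=\bK$. This gives
$$\sum_k |d_k|^2 e^{-2T'\Im\la_k}\le C\Psi(T,T')^2\,\|w_{xxx}(\pi,\cdot)\|^2_{L^2(0,T)}.$$
Here $\Im\la_k=-\Re\la_{|k|}^\pm=\rho |k|^{2\al}/2$, and $e^{-T'\Im\la_k}=|e^{\la^\pm_n T'}|$. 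The estimate is first obtained for finite sums and then extended by density, using the Bessel property, Lemma \ref{L2}.

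Next we convert this coefficient bound into the right-hand side of \rq{ob1a}. The state at time $t=0$ corresponds to $s=T$. Its Fourier coefficients are
$$w_n(0)=c_n^+e^{\la_n^+T}+c_n^-e^{\la_n^-T},\qquad w_{t,n}(0)=-\big(\la_n^+c_n^+e^{\la_n^+T}+\la_n^-c_n^-e^{\la_n^-T}\big).$$
Since $|\la_n^\pm|=n^2$ (because $\la_n^+\la_n^-=n^4$ and the roots are conjugate), we get
$$\|w(\cdot,0)\|^2_{X^3}+\|w_t(\cdot,0)\|^2_{X^1}\le C\sum_n n^6\big(|c_n^+|^2+|c_n^-|^2\big)e^{-\rho n^{2\al}T}.$$
This equals $C\sum_k|d_k|^2e^{-2T\Im\la_k}$, which by the first step is at most $C\Psi(T,T)^2\|w_{xxx}(\pi,\cdot)\|^2$. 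Choosing $T'=T$ matches the decay exactly, and that yields \rq{ob1a}.

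The main obstacle is verifying that Theorem \ref{win} applies with the needed uniformity: the biorthogonal bound \rq{geq} from \cite{AEI} with $\varkappa=1/2$, and the summability $\sum_k e^{-\sigma_kT'+C\sigma_k^{1/2}}<\infty$. The latter holds because $\sigma_k\sim \rho|k|^{2\al}/2$ grows polynomially, so this sum converges for every $\al>0$. A further point to check is that the constant depends only on $\al,\rho,T$, not on $(w^0,w^1)$. Once the inequality is established for finitely many modes, density in $X^3\times X^1$ completes the proof.
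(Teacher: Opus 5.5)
Your proof is correct and follows the paper's argument. You expand the observation in $\{e^{\la_n^\pm (T-t)}\}$ with coefficients of size $n^3|c_n^\pm|$, apply Theorem \ref{win}, and compare with the bound $\|w(\cdot,0)\|^2_{X^3}+\|w_t(\cdot,0)\|^2_{X^1}\le C\sum_n n^6(|c_n^+|^2+|c_n^-|^2)e^{-\rho n^{2\al}T}$. The only differences are cosmetic. The paper takes $T'=T/2$ and then uses $e^{-\rho n^{2\al}T}\le e^{-\rho n^{2\al}T/2}$, while your $T'=T$ matches the weights exactly. Reading the observation at $x=\pi$ rather than the $x=0$ written in \rq{ob1a} changes nothing, since $|\f_n'(0)|=|\f_n'(\pi)|$.
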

\begin{remark}
 For $\al <1/2,$ a  result similar to Proposition \ref{propd1} was proven in \cite{E}, but in that paper it was not proven that observability implies null-controllability. 
\end{remark}
Proof of proposition: 

Recall $|\la^{\pm}_n|<Cn^2$ for large $n$. Also, the assumptions $\al< 1$ and $\rho <2$ imply $\Re (\la_n^\pm)=-\rho n^{2\al}/2.$
Thus we have
\begin{eqnarray}
\| w(*,0)\|_{X^3}^2+\| w_t(*,0)\|_{X^1}^2& = & \sum_1^\infty
n^6|c_n^+e^{\la_n^+T}+c_n^-e^{\la_n^-T}|^2
+n^2|\la_n^+c_n^+ e^{\la_n^+T}+ \la_n^-c_n^-e^{\la_n^-T}|^2\nonumber\\
& \leq &  C_1\sum_1^\infty
n^6
(|c_n^+|^2+|c_n^-|^2)\ |e^{\la_n^+T}|^2\nonumber\\
& \leq &  C_1\sum_1^\infty n^6
(|c_n^+|^2+|c_n^-|^2)e^{-\rho n^{2\al}T};\label{o1}
\end{eqnarray}
here, $C_1$ is a positive constant that depends only on $\rho, \al$.
We apply Theorem \ref{win}, choosing $T'=T/2.$
Thus
\begin{eqnarray}
\int_0^T|w_{xxx}(0 ,t)|^2dt & = &
\int_0^T|\sum_1^\infty n^2\f_n'(0)(c_n^+e^{\la_n^+(T-t)}+c_n^-e^{\la_n^-(T-t)})|^2dt\nonumber\\
& \geq & {C(T)}\sum_1^\infty |n^3c_n^+e^{\la_n^+T/2}|^2
+|n^3c_n^-e^{\la_n^-T/2}|^2\nonumber\\
& \geq & {C(T)}\sum_1^\infty n^6e^{-\rho n^{2\al}T/2}(|c_n^+|^2
+|c_n^-|^2).\label{o2}
\end{eqnarray}
Combining  \rq{o1} and \rq{o2}, the proposition follows. $\Box$

Theorem \ref{bcD} is proven.

\
\begin{remark}
It was shown in \cite{AEI} that
for  $\rho>2$, there might be multiplicities in the frequencies of the form $\la_n^+=\la_m^-$ with $m\neq n$; these are obstructions to null controllability. It is natural to ask if the controllability holds for  $\rho >2$ when more boundary controls are applied. Here, we consider the case of a second, Dirichlet control at $x=0$. In this case, null controllability would be equivalent to the observability estimate
\beq 
\| w_{xxx}(0,*)\|^2_{L^2(0,T)}+\| w_{xxx}(\pi,*)\|^2_{L^2(0,T)}\geq C(\| w(*,0)\|_{X^3}^2+\| w_t(*,0)\|_{X^1}^2), \forall (w^0,w^1)\in X^3\times X^1.
\label{fail}
\eeq 
However,
$$
w_{xxx}(0,t)=C\sum_n n^3(c_n^+e^{\la_n^+(T-t)}+c_n^-e^{\la_n^-(T-t)}),
$$
$$
w_{xxx}(\pi,t)=C\sum_n n^3(-1)^{n+1}(c_n^+e^{\la_n^+(T-t)}+c_n^-e^{\la_n^-(T-t)}),
$$
with $C$ a constant. The resemblance of
$w_{xxx}(0,t)$ to $w_{xxx}(\pi,t)$ makes it unclear to us that the extra term on the left  hand side of \rq{fail} allows us to improve our estimate with the second control.
This is in contrast to the case of internal controls, where for $\al \in (1,3/2)$ we proved null controllability using a 4 dimensional family of controls, see \cite{AEI}.

\end{remark}

\subsection{Proof of Theorem \ref{bcN}}

We propose to study the structurally damped beam equation with Neumann control, which we write formally as
\begin{eqnarray}
u_{tt}+A_N^2 u+\rho \tilde{A}_N^\al u_t& = & 0\label{beamn9}\\
u_{xxx}(0,t)=u_{xxx}(\pi ,t) & = & 0\label{Nbc1n9}\\
u_{x}(0,t) & = & 0\label{Nbc2n9}\\ 
u_{x}(\pi,t) & = & f(t)\label{Nbc3n9}\\
u(x,0)=u^0(x),\ u_t(x,0)& = & u^1(x).\label{Ninitn9}
\end{eqnarray}
Then the adjoint problem is the system \rq{dbeamn}-\rq{dinitn}, and the formulation of the weak solution is given by \rq{weakN}. By the standard duality argument, null-controllability is equivalent to the validity of the 
 associated observability estimate is 
\beq \label{ob3}
\| w_{xx}(\pi,*)\|^2_{L^2(0,T)}\geq C(\| w(*,0)\|_{X^2_{N,0}}^2+\| w_t(*,0)\|_{X^0_{N,0}}^2), \forall (w^0,w^1)\in X^2_{N,0}\times X^0_{N,0}.
\eeq
Note that the function $w(x,t)=\f_0(x)$ is a non-trivial solution to the adjoint system for which   \rq{ob3} would fail, thus the need for 
the quotient spaces $X_{N,0}^p$.
\begin{prop}
\rq{ob3} holds.
\end{prop}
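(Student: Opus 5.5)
The plan mirrors the proof of Proposition \ref{propd1}, with the Neumann eigenfunctions and the quotient spaces in place of the Dirichlet ones. For $(w^0,w^1)\in X^2_{N,0}\times X^0_{N,0}$ the zero mode vanishes, $c_0^\pm=0$, so the solution of \rq{dbeamn}-\rq{dinitn} is
$$w(x,t)=\sum_{n\ge1}(c_n^+e^{\la_n^+(T-t)}+c_n^-e^{\la_n^-(T-t)})\f_n(x),\qquad \f_n(x)=\tfrac{1}{\sqrt{2\pi}}\cos(nx),$$
with $c_n^\pm$ given by \rq{cn+1}, \rq{cn-1}. Differentiating twice in $x$ gives
$$w_{xx}(\pi,t)=-\tfrac{1}{\sqrt{2\pi}}\sum_{n\ge1}(-1)^n n^2\bigl(c_n^+e^{\la_n^+(T-t)}+c_n^-e^{\la_n^-(T-t)}\bigr).$$
So the observation is a nonharmonic exponential series whose coefficients have modulus comparable to $n^2|c_n^\pm|$; the zero frequency never appears.

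\emph{Upper bound on the right-hand side of \rq{ob3}.} Since $\al<1$ and $\rho<2$, we have $\rho^2n^{4\al}<4n^4$ for every $n\ge1$. Hence $\la_n^\pm=\beta_n\pm i\al_n$ with $\Re\la_n^\pm=-\rho n^{2\al}/2$ and $|\la_n^\pm|=n^2$. Evaluating at $t=0$,
$$\|w(*,0)\|^2_{X^2_{N,0}}+\|w_t(*,0)\|^2_{X^0_{N,0}}=\sum_{n\ge1}n^4\bigl|c_n^+e^{\la_n^+T}+c_n^-e^{\la_n^-T}\bigr|^2+\bigl|\la_n^+c_n^+e^{\la_n^+T}+\la_n^-c_n^-e^{\la_n^-T}\bigr|^2 ,$$
which is at most
$$C_1\sum_{n\ge1}n^4(|c_n^+|^2+|c_n^-|^2)e^{-\rho n^{2\al}T},$$
exactly as in \rq{o1}.

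\emph{Lower bound on the observation.} Substitute $s=T-t$, so the observation becomes $\sum_k d_k e^{i\la_k s}$ over the modified frequencies $\Lambda^i$ of \rq{La}, restricted to $K=\bK$ (no $\la=0$ term). Here $d_{\pm n}=\mp(-1)^n n^2c_n^\pm/\sqrt{2\pi}$, and there are no $t e^{\la t}$ terms because $\la_n^+\neq\la_n^-$. Apply Theorem \ref{win} with $T'=T/2$; its hypotheses $\al\le1$, $\rho<2$ hold. Since $\Im\la_k=\rho n^{2\al}/2$, this gives
$$\sum_{n\ge1}n^4(|c_n^+|^2+|c_n^-|^2)e^{-\rho n^{2\al}T/2}\le C\,\Psi(T,T/2)^2\int_0^T|w_{xx}(\pi,t)|^2dt .$$
Because $e^{-\rho n^{2\al}T}\le e^{-\rho n^{2\al}T/2}$, combining with the upper bound yields \rq{ob3}.

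\emph{Main obstacle.} The delicate point is that Theorem \ref{win} is formulated for the full index set, and the window estimate needs the biorthogonal bounds \rq{geq} from \cite{AEI} for the Neumann frequency set. I would argue that dropping the frequency $0$ (index set $\bK$ instead of $\bZ$) only shrinks the exponential family. A biorthogonal family for the larger family $\{1,t,e^{i\la_kt}\}$ remains biorthogonal to the subfamily and keeps the same norm bounds. The separation facts of Lemma \ref{separat}, transferred to the Neumann case as noted in Section 2.2, then make the proof of Theorem \ref{win} go through verbatim.

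A density remark completes the argument. The estimate is first obtained for finite sums, then extended to all of $X^2_{N,0}\times X^0_{N,0}$. Both sides are continuous in these norms: the left side by Lemma \ref{adjnreg}, part C, in its quotient-space version, and the right side trivially.
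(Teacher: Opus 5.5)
Your proposal is correct and follows essentially the same route as the paper's proof. Both bound the initial energy by $C\sum_{n\ge1} n^4 e^{-\rho n^{2\al}T}(|c_n^+|^2+|c_n^-|^2)$ using $c_0^\pm=0$, bound the observation from below via Theorem \ref{win} with $T'=T/2$, and combine using $e^{-\rho n^{2\al}T}\le e^{-\rho n^{2\al}T/2}$; your remarks on restricting to the index set $\bK$ and extending from finite sums by density are valid details that the paper leaves implicit.
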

Proof: 
We apply Theorem \ref{win}, choosing $T_0=T$ and $T'=T/2.$
Thus
\begin{eqnarray*}
\int_0^T|w_{xx}(\pi ,t)|^2dt & = &
\int_0^T|\sum_{n=1}(-1)^nn^2(c_n^+e^{\la_n^+(T-t)}+c_n^-e^{\la_n^-(T-t)})|^2dt\nonumber\\
& \geq & {C}\sum_{n=1} |n^2c_n^+e^{\la_n^+T'}|^2
+|n^2c_n^-e^{\la_n^-T'}|^2\nonumber\\
& \geq & {C}\sum_{n=1}n^4e^{-\rho n^{2\al}T/2}(|c_n^+|^2
+|c_n^-|^2),\label{o3n}
\end{eqnarray*}
with $C$ depending on $\rho, \al, T.$ Also, since 
$c_0^+=c_0^-=0$ by hypothesis, we have 
\begin{eqnarray*}
\| w(*,0)\|_{X^2_{N,0}}^2+\| w_t(*,0)\|_{X^0_{N,0}}^2
& = & |\sum_{n=1} |(n^2+\la_n^+)c_n^+e^{\la_n^+T}+
(n^2+\la_n^-)c_n^-e^{\la_n^-T}|^2\\
& \leq & C\sum_{n=1} n^4e^{-\rho n^{2\al}T}(|c_n^+|^2+|c_n^-|^2)\\
& \leq & C\sum_{n=1} n^4e^{-\rho n^{2\al}T/2}(|c_n^+|^2+|c_n^-|^2).
\end{eqnarray*}
Combining  these inequalities, the proposition follows. $\Box$

This completes the proof of Theorem \ref{bcN}.

\subsection{Moment null-control}

For this section, for readability we write 
$X_D^p$ as $X^p$.

We propose to study the structurally damped beam equation with moment control, which we write formally as
\begin{eqnarray}
u_{tt}+A_D^2 u+\rho \A u_t& = & 0\label{beamm}\\
u(0,t)=u(\pi ,t) & = & 0\label{bc1m}\\
u_{xx}(0,t) & = & 0\label{bc2m}\\ 
u_{xx}(\pi,t) & = & g(t)\label{bc3m}\\
u(x,0)=u^0(x),\ u_t(x,0)& = & u^1(x).\label{initm}
\end{eqnarray}
Then the adjoint problem is the system \rq{dbeam'}-\rq{dinit'}, and the definition of the weak solution is given in \rq{wm0}.
Thus null controllability is equivalent to the associated observability estimate: 
\beq \label{ob3m}
\| w_{x}(\pi,*)\|^2_{L^2(0,T)}\geq C(\| w^0\|_{X^1}^2+\| w^1\|_{X^{-1}}^2), \forall (w^0,w^1)\in X^1\times X^{-1}.
\eeq
\begin{prop}\label{p3}
\rq{ob3m} holds.
\end{prop}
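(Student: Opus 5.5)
The plan is to write $w_x(\pi,t)$ as an exponential series in the reversed time $s=T-t$, rewrite both sides of \rq{ob3m} in terms of the coefficients $c_n^\pm$, and then look for a lower bound on the observation from the complex window estimate, Theorem \ref{win}. I expect the last step to fail as \rq{ob3m} is worded, because Theorem \ref{win} only controls exponentially weighted coefficients.

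\textbf{Step 1: the observation.} Write the adjoint solution as
$$w(x,t)=\sum_{n\ge1}\big(c_n^+e^{\la_n^+(T-t)}+c_n^-e^{\la_n^-(T-t)}\big)\f_n(x),$$
with $c_n^\pm$ given by \rq{cn+1}--\rq{cn-1} in terms of $w^0_n,w^1_n$ (Dirichlet version, $q_n=\sqrt{\rho^2n^{4\al}-4n^4}$). Since $\f_n'(\pi)=\sqrt{2/\pi}\,n(-1)^n$, we get
$$w_x(\pi,t)=\sqrt{2/\pi}\sum_n(-1)^n n\big(c_n^+e^{\la_n^+s}+c_n^-e^{\la_n^-s}\big),\qquad s=T-t\in(0,T).$$
So $\|w_x(\pi,\cdot)\|_{L^2(0,T)}$ is the $L^2(0,T)$-norm of an exponential sum with frequencies $-i\La^i$.

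\textbf{Step 2: the right-hand side in terms of $c_n^\pm$.} From $w^0_n=c_n^++c_n^-$ and $w^1_n=-(\la_n^+c_n^++\la_n^-c_n^-)$, together with $|\la_n^+-\la_n^-|=|q_n|\sim 2n^2$ (for $\al<1$, $\rho<2$) and $|\la_n^\pm|=n^2$, the map $(w^0_n,w^1_n)\mapsto(c_n^+,c_n^-)$ is uniformly bi-Lipschitz after scaling by $n$. Hence
$$\|w^0\|_{X^1}^2+\|w^1\|_{X^{-1}}^2=\sum_n\big(n^2|w^0_n|^2+n^{-2}|w^1_n|^2\big)\asymp\sum_n n^2\big(|c_n^+|^2+|c_n^-|^2\big).$$
Thus \rq{ob3m} is equivalent to the lower Riesz-type inequality
$$\Big\|\sum_n n\big(c_n^+e^{\la_n^+s}+c_n^-e^{\la_n^-s}\big)\Big\|_{L^2(0,T)}^2\ge C\sum_n n^2\big(|c_n^+|^2+|c_n^-|^2\big).$$

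\textbf{Step 3: applying Theorem \ref{win} (the main obstacle).} Applying Theorem \ref{win} with coefficients $n c_n^\pm$ and small $T'>0$ gives
$$\Big\|\sum_n n\big(c_n^+e^{\la_n^+s}+c_n^-e^{\la_n^-s}\big)\Big\|_{L^2(0,T)}^2\ge C\,\Psi(T,T')^{-2}\sum_n n^2e^{-\rho n^{2\al}T'}\big(|c_n^+|^2+|c_n^-|^2\big).$$
This carries the weight $e^{-\rho n^{2\al}T'}$, which Step 2 requires to be absent. My first attempt at removing it would be to use the biorthogonal family $\{g_{k,n}\}$ directly, as in the proof of Theorem \ref{win}, with $T'=0$. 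However, the bound \rq{geq} grows like $e^{C\sigma_k^{1/2}}$, so Gershgorin no longer gives a finite norm.

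In fact the inequality in Step 2 cannot hold uniformly. Testing on a single mode gives $\|e^{\la_n^+s}\|_{L^2(0,T)}^2\sim(\rho n^{2\al})^{-1}\to0$, while the right side stays of order one after normalizing $n^2|c_n^+|^2=1$. So this route does not yield \rq{ob3m} as worded.

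What the argument does deliver, mirroring Propositions \ref{propd1} and \rq{ob3}, is the weighted bound above with $T'=T/2$. Combined with $e^{-\rho n^{2\al}T}\le e^{-\rho n^{2\al}T/2}$, this controls exactly $\|w(\cdot,0)\|_{X^1}^2+\|w_t(\cdot,0)\|_{X^{-1}}^2$. I would present that estimate and flag that the right-hand side of \rq{ob3m} must be read at $t=0$ for the statement to be true.
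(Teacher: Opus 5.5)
Your argument is the paper's own proof: apply Theorem~\ref{win} with $T'=T/2$ to get $\int_0^T|w_x(\pi,t)|^2dt\geq C\sum_n n^2e^{-\rho n^{2\al}T/2}(|c_n^+|^2+|c_n^-|^2)$, then bound $\|w(\cdot,0)\|_{X^1}^2+\|w_t(\cdot,0)\|_{X^{-1}}^2$ by $C\sum_n n^2e^{-\rho n^{2\al}T}(|c_n^+|^2+|c_n^-|^2)$. Your caveat is also correct and matches what the paper actually establishes: the right-hand side of \rq{ob3m} must be the state at $t=0$, which is what the paper's estimate bounds and what the duality argument needs, since with the final data at $t=T$ your single-mode test shows the inequality fails for $\al\in(0,1)$.
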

Proof: 
We apply Theorem \ref{win}, choosing  $T'=T/2.$
Thus
\begin{eqnarray}
\int_0^T|w_{x}(\pi ,t)|^2dt & = &
\int_0^T|\sum_n\f_n'(\pi)(c_n^+e^{\la_n^+(T-t)}+c_n^-e^{\la_n^-(T-t)})|^2dt\nonumber\\
& \geq & {C}\sum_n |nc_n^+e^{\la_n^+T'}|^2
+|nc_n^-e^{\la_n^-T'}|^2\nonumber\\
& \geq & C\sum_n n^2e^{-\rho n^{2\al}T/2}(|c_n^+|^2
+|c_n^-|^2),\label{o3}
\end{eqnarray}
with $C$ depending on $\rho, \al, T.$
On the other hand, 
\begin{eqnarray}
\| w(*,0)\|_{X^1}^2+\| w_t(*,0)\|_{X^{-1}}^2
& \leq & C\sum_{n=1} n^2e^{-\rho n^{2\al}T}(|c_n^+|^2+|c_n^-|^2).\label{o4}
\end{eqnarray}
Combining  \rq{o3} and \rq{o4}, the proposition follows. 

The following is immediate from Proposition \ref{p3}:
\begin{thm}\label{thmmom}
Let $T>0.$ 
Let $\al \in [0,1)$, $\rho  \leq 2$.
Given $(u^0,u^1)\in X^1\times X^{-1}$, there exists $g\in L^2(0,T)$ such that the solution $u$ to the system   \rq{beamm}-\rq{initm} solves
$$u(x,T)=u_t(x,T)=0,
$$
and there exists a constant $C$ depending only on $\al,\rho , T$ such that
$$\| g\|_{L^2(0,T)}\leq Ce(\| u^0\|_{X^1}+\|u^1\|_{X^{-1}}).$$
\end{thm}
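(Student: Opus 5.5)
The plan is the standard duality (HUM) argument, which reduces Theorem \ref{thmmom} to Proposition \ref{p3}. First I fix the weak formulation \rq{wm0}, but with nonzero initial data $(u^0,u^1)\in X^1\times X^{-1}$. Integration by parts with the boundary terms kept at both ends shows that $g$ is a null control exactly when
\begin{eqnarray*}
\int_0^T w_x(\pi,t)g(t)\,dt &=& -\< w_t(*,0)-\rho A_D^\al w(*,0),u^0\>_{X^{-1},X^{1}}+\< w(*,0),u^1\>_{X^1,X^{-1}}
\end{eqnarray*}
for all adjoint solutions $w$ of \rq{dbeam'}--\rq{dinit'}. The signs are to be checked against \rq{wm0}. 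The right-hand side is a linear functional $L(w^0,w^1)$ of the terminal data. Using Lemma \ref{adjnreg}-type continuity of the adjoint flow in $X^1\times X^{-1}$, and the fact that $A_D^\al$ maps $X^1$ into $X^{1-2\al}\subset X^{-1}$, I get
$$|L(w^0,w^1)|\leq C(\|u^0\|_{X^1}+\|u^1\|_{X^{-1}})\,(\|w(*,0)\|_{X^1}+\|w_t(*,0)\|_{X^{-1}}).$$

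Next I apply Proposition \ref{p3}. The observability estimate \rq{ob3m} is applied to the initial-time state $(w(*,0),w_t(*,0))$; this is the version actually proved, since \rq{o4} bounds $\|w(*,0)\|$. It gives
$$\|w(*,0)\|_{X^1}+\|w_t(*,0)\|_{X^{-1}}\le C\|w_x(\pi,\cdot)\|_{L^2(0,T)}.$$
So $L$ factors through the observation map $\mathcal{O}:(w^0,w^1)\mapsto w_x(\pi,\cdot)$. On the range of $\mathcal{O}$ in $L^2(0,T)$ I define the functional $\ell(\mathcal{O}(w^0,w^1))=L(w^0,w^1)$. This is well defined, because $\mathcal{O}(w^0,w^1)=0$ forces $w(*,0)=w_t(*,0)=0$ and hence $L=0$. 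It is also bounded with norm at most $C(\|u^0\|_{X^1}+\|u^1\|_{X^{-1}})$. I extend $\ell$ by Hahn--Banach, or equivalently by continuity to the closure followed by orthogonal projection. The Riesz representation theorem then gives $g\in L^2(0,T)$ with $\int_0^T w_x(\pi,t)g\,dt=\ell(w_x(\pi,\cdot))$ and $\|g\|\le C(\|u^0\|_{X^1}+\|u^1\|_{X^{-1}})$.

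Finally, I use Theorem \ref{wpM}, together with its straightforward extension to nonzero initial data by superposition with the free evolution. This gives the weak solution $u^g$, and the identity above shows that the pairing of $(u(*,T),u_t(*,T))$ against every $(w^0,w^1)\in X^1\times X^{-1}$ vanishes. Since the pairing $\<w^1-\rho A^\al w^0,\cdot\>$, $\<w^0,\cdot\>$ in \rq{wm0} separates points of $X^1\times X^{-1}$ (take $w^0=0$ first, then $w^1=\rho A^\al w^0$), I conclude $u(*,T)=u_t(*,T)=0$.

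The main obstacle I expect is the case $\rho=2$, which is allowed in the statement even though Proposition \ref{p3} is proved via Theorem \ref{win}. When $\rho=2$, Lemma \ref{separat}(2) gives the double root $\la_1^+=\la_1^-$. The term $c_1^+e^{\la_1^+t}+c_1^-te^{\la_1^+t}$ must then be used, and this is exactly why Theorem \ref{win} is stated for families including $t^ne^{i\la_kt}$, $n=0,1$. I would first check that the biorthogonal estimate \rq{geq} from \cite{AEI} covers $\rho=2$; the text asserts $\varkappa=1/2$ for $\al<1$. With that in hand, the bound \rq{o4} for the finitely many degenerate modes only costs a constant, so the argument goes through unchanged.
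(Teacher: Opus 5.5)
Your proposal is correct and follows the paper's route: the paper also obtains Theorem \ref{thmmom} by the standard duality argument from the observability inequality of Proposition \ref{p3}, which it proves from Theorem \ref{win} with $T'=T/2$ together with the bound \rq{o4} on $(w(*,0),w_t(*,0))$. Your explicit HUM construction, your reading of \rq{ob3m} as an estimate on the state at $t=0$ rather than on the terminal data, and your treatment of the double root $\la_1^+=\la_1^-$ at $\rho=2$ just spell out what the paper compresses into the word ``immediate.''
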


\subsection{Shear control.}
We propose to study the structurally damped beam equation with shear control, which we write formally as
\begin{eqnarray}
u_{tt}+A_N^2 u+\rho \tilde{A}_N^\al u_t& = & 0\label{beams}\\
u_{x}(0,t)=u_{x}(\pi ,t) & = & 0\label{Nbc1s}\\
u_{xxx}(0,t) & = & g(t)\label{Nbc2s}\\ 
u_{x}(\pi,t) & = & 0\label{Nbc3s}\\
u(x,0)=u^0(x),\ u_t(x,0)& = & u^1(x).\label{Ninits}
\end{eqnarray}
Then the adjoint problem is the system \rq{dbeamn}-\rq{dinitn}, and the formulation of the weak solution is given by \rq{weaks}, with observation 
$
w(\pi,t).
$

The following is proven using arguments resembling the ones above, and is left to the reader. 
\begin{thm}\label{thmsh}
Let $T>0.$ 
Let $\al \in [0,1)$, $\rho  < 2$.
Given $(u^0,u^1)\in X^2_N\times X^{0}_N$, there exists $g\in L^2(0,T)$ such that the solution $u$ to the system   \rq{beams}-\rq{Ninits} solves
$$u(x,T)=u_t(x,T)=0,
$$
and there exists a constant $C$ depending only on $\al,\rho ,T$ such that
$$\| g\|_{L^2(0,T)}\leq C(\| u^0\|_{X^2_N}+\|u^1\|_{X_N^{0}}).$$
\end{thm}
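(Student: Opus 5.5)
The plan follows the same duality scheme used for Theorems \ref{bcD}, \ref{bcN} and \ref{thmmom}. Since the weak solution is defined through \rq{weaks}, with observation $w(\pi,t)$ (or $w(0,t)$ if the shear is applied at $x=0$; the Fourier computation is identical up to the factors $(-1)^n$), $g$ is a null control exactly when
$$
\int_0^T w(\pi,t)g(t)\,dt = \< w_t(*,0)-A_N^\al w(*,0),u^0\>_{X^{-2}_N,X^2_N}-\< w(*,0),u^1\>_{X^0_N,X^0_N}
$$
for all adjoint data $(w^0,w^1)\in X^0_N\times X^{-2}_N$. The standard argument (Hilbert Uniqueness / Riesz representation on the closure of the observations) reduces the existence of $g$, together with the bound on $\|g\|_{L^2(0,T)}$, to the observability estimate
$$
\int_0^T|w(\pi,t)|^2dt\geq C\big(\| w(*,0)\|_{X^0_N}^2+\|w_t(*,0)\|^2_{X^{-2}_N}\big).
$$
The right side pairs correctly with $(u^0,u^1)\in X^2_N\times X^0_N$, which is the intended duality.

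To prove the estimate, expand
$$
w(\pi,t)=c_0^++c_0^-(T-t)+\sum_{n\ge1}(-1)^n\big(c_n^+e^{\la_n^+(T-t)}+c_n^-e^{\la_n^-(T-t)}\big),
$$
with the normalization constants absorbed. After the change of variable $t\mapsto T-t$, apply Theorem \ref{win} with index set $K=\bZ$. This index set accounts for the double frequency $0$ with exponentials $1$ and $t$; that is why the theorem allows the $t^n$, $n=0,1$, terms. Take $T'=T/2$. Since $\Im$ of the rotated frequencies equals $\rho n^{2\al}/2$, this gives
$$
\int_0^T|w(\pi,t)|^2dt\ge C(T)\Big(|c_0^+|^2+|c_0^-|^2+\sum_{n\ge1}e^{-\rho n^{2\al}T/2}(|c_n^+|^2+|c_n^-|^2)\Big).
$$

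On the other side,
$$
w(*,0)=c_0^++c_0^-T+\sum_{n\ge1}(c_n^+e^{\la_n^+T}+c_n^-e^{\la_n^-T})\f_n,
$$
and $w_t(*,0)$ has coefficients $-c_0^-$ and $-(\la_n^+c_n^+e^{\la_n^+T}+\la_n^-c_n^-e^{\la_n^-T})$. For $\rho<2$ and $\al<1$ we have $|e^{\la_n^\pm T}|=e^{-\rho n^{2\al}T/2}$ and $|\la_n^\pm|=n^2$, so
$$
\|w(*,0)\|^2_{X^0_N}+\|w_t(*,0)\|^2_{X^{-2}_N}\le C\Big(|c_0^+|^2+|c_0^-|^2+\sum_{n\ge1}e^{-\rho n^{2\al}T}(|c_n^+|^2+|c_n^-|^2)\Big).
$$
This is dominated by the lower bound above, since $e^{-\rho n^{2\al}T}\le e^{-\rho n^{2\al}T/2}$, and the two displays together give the observability estimate.

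The main point needing care is the zero mode. Unlike in Theorem \ref{bcN}, the constant function is observed here, because $w(\pi,t)$ sees $c_0^\pm$, so no quotient space is needed. This requires Theorem \ref{win} to be valid with $K=\bZ$, which in turn needs the biorthogonal family for $\{1,t\}\cup\{e^{i\la_kt}\}$ from \cite{AEI} with the bound \rq{geq}. I will invoke it as stated, noting that $0$ is separated from the other frequencies (Lemma \ref{separat}) and that only finitely many $\la_n^+=\la_n^-$ coincidences occur, and those are excluded when $\rho<2$. The case $\al=0$ also needs a remark: then the damping is uniform, $\Re\la_n^\pm=-\rho/2$, and the estimate is still fine because the weights $e^{-\rho T}$ and $e^{-\rho T/2}$ are comparable.
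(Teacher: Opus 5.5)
Your proposal is correct and is essentially the argument the paper intends when it leaves this theorem to the reader: duality reduces null-controllability to observability of $w(\pi,t)$ (or $w(0,t)$) with respect to $\|w(\cdot,0)\|_{X^0_N}+\|w_t(\cdot,0)\|_{X^{-2}_N}$, and that estimate follows from Theorem \ref{win} with $T'=T/2$ together with the elementary upper bound at $t=0$, exactly as in the proofs for Theorems \ref{bcD}, \ref{bcN} and \ref{thmmom}. Your explicit treatment of the zero mode is the one adjustment relative to Theorem \ref{bcN}, and you handle it correctly: the mode is seen by $w(\pi,t)$, so no quotient space is needed, and it is covered by taking $K=\mathbb{Z}$ in Theorem \ref{win}.
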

\section{Conclusion}

We see our accomplishments in this paper as follows.

1- 
We extended an integration by parts formula for the spectral fractional Laplacian, $(-\Delta )^\al$ with $\al <1$, proven in \cite{E2}for the  Dirichlet Laplacian on rectangular domains, to 
 Neumann boundary conditions on the interval.

 2- For $\al<1$ and any $\rho >0$, we proved well-posedness results for the associated IBVP  with each of the following non-homogeneous boundary values: $u(\pi,t),\ u_x(\pi,t),\ u_{xx}(\pi,t), \ u_{xxx}(\pi,t)$.

3- For $\al < 1$ and $\rho < 2$, we prove null controllability. For other cases, the gap condition on frequencies does not necessarily hold, and failure of the gap condition (except for the case $\la_n^+=\la_n^-$) implies boundary null controllability in finite time will not hold, in contrast to the case of interior control (see, e.g.  \cite{AEI}).

Finally, we note some possible future research. 

A)  One could use the integration by parts formula to discuss well-posedness for other boundary value problems involving $\A$ or $\tilde{A}_N^\al$, such as the wave equation 
$$
u_{tt}-u_{xx}+ \A u_t=0.
$$
This will be considered in future work.

B) 
The integration by parts formula, depending heavily on symmetries, does not readily extend to Laplace-like operators with variable coefficients such as 
$u \mapsto (a(x)u_x)_x$. This problem remains open.

C) The methods of this paper will also apply to beams whose adjoint problem has mixed boundary conditions such as 
$$
w(0,t)=w_{xx}(0,t)=0=w_x(\pi,t)=w_{xxx}(\pi,t).
$$

D) The explicit formula for $J_N$, which is comparable to the corresponding kernel for the ``regional fractional Laplacian" (see \cite{L}) will allow one to compare the eigenvalues of the two operators. This will be considered in future work. 

E) The generalization of Theorems \ref{ibpd}, \ref{ibpn} to non-rectangular domains of higher dimension is not well understood, though see 
\cite{AE1} for partial results. 
\section{Appendix}
\noindent{\bf Proof of Equation \rq{SVN}: }
Assume that $u = \f_j$ is an eigenfunction of the Neumann Laplacian associated to the eigenvalue $\la_j$ . Then, repeating the arguments of (\cite{AD}, Appendix),
$A_N^\al\f_j=\la_j^\al\f_j$, $e^{-tA}\f_j=e^{\la_j}\f_j$, and for all  $x\in (0,\pi)$,
\begin{eqnarray*}
\frac{\Gamma (1-\al)}{\al}A^\al u(x) & =& \int_0^\infty \left (u(x)-e^{-tA}u(x)\right )\frac{dt}{t^{\al+1}}\\
& =& \int_0^\infty \left (u(x)-\int_0^\pi {p}_N(t,x,y)u(y)dy\right )\frac{dt}{t^{\al +1}}\\
&=& \lim_{\ep \ra 0}\int_0^\infty \int_{|y-x|\geq \ep}{p}_N(t,x,y)(u(x)-u(y))dy\frac{dt}{t^{\al +1}}\\
&&+\int_0^\infty u(x)\left( 1-\int_0^\pi {p}_N(t,x,y)dy\right )\frac{dt}{t^{\al+1}}\\
& = & PV\int_0^\pi (u(x) - u(y))J_N(x,y) dy +
\kappa (x)u(x).
\end{eqnarray*}
For Neumann boundary conditions, since $\la_0=0$ and $\f_0=1/\pi$, we have 
$$\kappa (x)=1-\int_0^\pi \int_0^\pi {p}_N(t,x,y)dy=0. $$
For general $u\in X_N^2$, the equations above hold by a density argument; for details the reader is referred to \cite{AD}.

\

\noindent{\bf Proof of Lemma \ref{L2}:} We prove part A, leaving the similar part B to the reader.
  Since a finite union of Bessel sequences is a Bessel sequence, it suffices to prove each that 
$\{ e^{t \la_n^+}: n>M\}$ and $\{e^{t \la_n^-}: n>M\}$ is a Bessel sequence for some positive integer $M$. We give the proof for $\{ e^{t \la_n^+}: n>M\}$, the other proof being similar. For $M$ sufficiently large, $\la^+_n$ is non-real for $n>M$, and   
$$\Im (\la_n^+)= \frac{1}{2}\sqrt{4n^4-\rho^2n^{4\al}}\asymp n^2, \Re (\la_n^+)=\frac{\rho}{2}n^{2\al}
.$$

 For $m\in \bZ,$ let
$
e_m(t)=e^{im(t-T/2)2\pi /T}/\sqrt{T}.
$
Of course, $\{ e_m\}$ is the standard orthonormal basis of $L^2(0,T)$. We define $E$ to be the set of finite linear combinations of $e_m$.
By \cite{He}, it suffices to prove there exists a positive constant $B$ such that for all $f\in E,$
$$
\sum_n \<  e^{t \la_n^+},f\>^2 <B\| f\|^2_{L^2(0,T)}.
$$
 We calculate
$$
\int_0^Te_m(t)e^{t\la_n^+}dt=
\frac{e^{-im\pi}}{\sqrt{T}}\frac{e^{T\la_n^+}-1}{im2\pi/T+\la_n^+}.
$$
Thus for $f(t)=\sum_{m=-N}^Nf_me_m,$ we have
$$
\< f,e^{-t\la_n^+}\> = \frac{2}{\sqrt{T}}\sum_{m=-N}^N
f_me^{-im\pi}\frac{e^{\la_n^+ T }-1}{\rho n^{2\al}+i(m2\pi/T+\sqrt{4n^4-\rho^2n^{4\al}})}.
$$
Then there exists positive constant $B$, independent of $N$, such that 
\begin{eqnarray*}
|\< f,e^{t\la_n^+}\>|^2& \leq & \frac{1}{{T}}\|f\|^2\sum_{|m|\leq N}
|\frac{e^{T\la_n^+ }-1}{\rho n^{2\al}+i(m2\pi/T+\sqrt{4n^4-\rho^2n^{4\al}})}|^2\\
&\leq & \frac{C}{T}\|f\|^2\sum_{|m|\leq N}\frac{1}{(2\pi m/T)^2+n^4}\\
& \leq & B\| f\|^2_{L^2(0,T)}.\Box
\end{eqnarray*}

\subsection{Acknowledgements}

Sergei Ivanov, who was an active contributor in this manuscript, passed away in February.

{The research of S.A. was  supported  in part by the National Science Foundation, grant DMS 2308377, and by the Ministry of Education and Science of the Russian Federations part of the program of the Moscow Center for Fundamental and Applied Mathematics under the Agreement No. 075-15-2025-345.}

\end{document}
{\bf Remark}  
Recall 
$$
\A u(x)= \int_0^\pi (u(x)-u(y))J(x,y)dy+\kappa (x)u(x) .
$$

By comparison, the  Restricted Fractional Dirichlet Laplacian, \cite{DL}, is defined as
$$
(-\Delta_R)^{\al}u(x)=C(\al )\int_{0}^\pi 
(u(x)-u(y))|x-y|^{-2\al-1}dy.
$$
Here $u$ is assumed to be in $\{v\in  H^\al(\bR): u=0 \ \mbox{a.e. outside}(0,\pi)\}$.  It would be interesting to use the highly similar formulae for $(-\Delta_R)^{\al}$ and $\A$ to compare the eigenvalues of these two operators. It is worth noting that the following estimate can be deduced from (\cite{SV}, Proposition 3.2):
$$
\frac{C(\al )}{2\al}(x^{-2\al}+(\pi -x)^{-2\al})\leq \kappa (x)\leq \frac{C(\al )}{\al}(x^{-2\al}+(\pi -x)^{-2\al}).
$$
The methods used to prove Theorem \ref{thm1} also apply to prove null controllability results if the interval $(0,\pi)$ is replaced by a product space,
{\bf provided $\al=1$}.
Let $(M,{\cal G})$ be a compact, sufficiently smooth Riemannian manifold with boundary, with dimension $d$, and let $\Delta_M$ be the associated self-adjoint Laplace--Beltrami operator with Dirichlet boundary conditions. Let $x$ and $y$ are local coordinates for $(0,\pi )$ and $M$  respectively. The metric ${\cal G}$ induces a measure $\mu$ on $M$, and hence the Hilbert space $L^2(M,d\mu).$ 
 \sout{Assume the spectrum of  $\Delta_M$ is discrete,}
\CM{Always?} 
\jb{We denote by}  $\{ \kappa_m, \ m\in \bN\}$, \jb{the set of eigenvalues of $\Delta_M$}
with $\kappa_m>0$ listed in non-decreasing order. Then the Weyl asymptotics are well known: $\kappa_m\asymp m^{2/d}.$
Let $\{ \phi_m(y)\}$ be  a set of corresponding \sout{unit} \jg{normalized} eigenfunctions. One naturally defines the Laplace operator 
$\Delta = \Delta_M-\frac{\partial^2}{\partial x^2}$
on the product $\tilde{M}:=(0,\pi)\times M$; imposing Dirichlet boundary conditions at $x=0, x=\pi$, we get $\Delta$ extending to self-adjoint operator. We denote the spectrum of $\Delta$ and the associated normalized eigenvalues by 
$$ 
\ \om_{m,n}=\kappa_m+n^2,\ 
\f_{m,n}(x,y)=\f_n(x)\phi_m(y),\ 
\f_n(x)=\sqrt{\frac{2}{\pi}}\sin (nx).
$$
\CM{\jg{May be replace $\f_{m,n}$  and $\phi_m$  by $\psi_{m,n}$  and $\f_m$ ?} \ \jb{I propose to use $\psi_{m,n}$ instead of $\f_{m,n}$}} 

We consider boundary controllability problem for the beam equation on  $\tilde{M}$. In particular,
suppose we have
\begin{eqnarray}
u_{tt}+\Delta^2 u+\rho (\Delta)^\al u_t & = &0, \ (x,y)\in \tilde{M}, \ t>0,\label{beam2M}\\
u|_{\pa M\cup \{ x=0 \}}=\pa^2u|_{\pa M\cup \{ x=0 \}}& = & 0\label{bc2M}\\
u|_{x=\pi }& =& f(y,t):=\sum_mf_m(t)\phi_m(y)\\
\pa^2u|_{x=\pi }& =& 0\\
u(x,y,0)=u^0(x,y),\ u_t(x,y,0)& = & u^1(x,y).\label{init2M}
\end{eqnarray}
Denote the $L^2$-norm on $\tilde{M}$ by $\| \cdot \|.$
Label 
$$\la_{n,m}^{\pm}=\frac{1}{2}(-\rho \om_{m,n}^\al \pm\sqrt{\rho^2\om_{m,n}^{2\al}-4\om_{m,n}^4}).
$$
In what follows, we make the following assumption:

\begin{thm}\label{prod}
Let $\al \in (0,1),$ or $\al =1$ and $\rho <2.$ 
Assume 
\beq 
 \la_{m,n}^+\neq \la_{m,p}^- ,\ \forall \, m,n,p\in \bN . 
\eeq 
Let $T>0.$ Given $(u^0,u^1)\in (H^2(\tilde{M})\cap H_0^1(\tilde{M})\times L^2(\tilde{M})$, there exist $f\in H_0^2(0,T;L^2(M))$ such that the solution $u$ to the system \rq{beam2M}-\rq{init2M} solves
$$u(x,T)=u_t(x,T)=0,
$$
with 
$$
\| f\|_{H_0^2(0,T;L^2(M))}\leq Ce^{Q(T)}(\| (u^0)''\|+\|u^1\|).
$$
Here the constant $C$ is depends only on $\al,\rho$, and $Q$ is as in Theorem 1. 
\end{thm}
The proof of this theorem is an adaptation of the proof of Theorem \ref{thm1}, using separation of variables. In this case, estimates that arise in the proof of Theorem \ref{thm1} must be proven uniformly with respect to the Fourier parameter in the $M$ direction.

\subsection{Alternative definition of fractional Laplacian}
In what follows, we will continue to denote by $(\Delta)^{\al}$ the standard spectral Laplacian. 
An alternative definition for the fractional Laplacian was given by \cite{APR} for any $\al \leq 1$:
\beq \label{altdef}
(\Delta_f)^{\al}u(x):=\sum_n u_nn^{2\al}\f_n(x)+u(\pi)\sum_n n^{2(\al -1)}\f_n'(\pi)\f_n(x).
\eeq 
The advantage of this definition is that it provides an integration by parts formula if $u(\pi)\neq 0$:
$$
\int_0^{\pi}(\Delta_f)^{\al}u(x)w(x)\ dx=\int_0^{\pi}u(x)(\Delta )^{\al}w(x)\ dx+ u(\pi)\sum_n n^{2(\al -1)}\f_n'(\pi)\int_0^{\pi}\f_n(x)w(x)dx.
$$
We now consider the system our IBVP with \rq{beam'} replaced by 
\beq \label{beamf}
u_{tt}+\pa_x^4 u+\rho (\Delta_{f})^\al u_t =  0.
\eeq
Then the adjoint problem remains the system \rq{dbeam'}-\rq{dinit'}. We now define the weak solution. Assume for the moment that $f\in C_0^2(0,T)$, and our solution $u$ is classical. Recall, 
$$w(x,t)=\sum(c_n^+e^{\la_n^+(T-t)}+c_n^-e^{\la_n^-(T-t)})\f_n(x).
$$
Hence integration by parts gives the following, for $(w_0,w_1)\in X^2\times X^0$,
\begin{eqnarray}
0& =& \int_0^{\pi}[w_tu-\rho (\Delta)^{\al}wu]_0^T dx+[\< w,u_t\>_{2}]^T_0dx\nonumber \\
& &+\int_0^Tw_{xxx}(\pi,t)f(t)dt-\int_0^Tf'(t)\sum_n \rho n^{2(\al -1)}\f_n'(\pi)w_n(t)dt\nonumber \\
& =& \int_0^{\pi}[w_tu-\rho (\Delta)^{\al}wu]_0^T dx+[\< w,u_t\>_{2}]^T_0dx\nonumber \\
& &+\int_0^Tf(t)(\sum_n\f_n'\big (\pi)(c_n^+e^{\la_n^+(T-t)}(-n^2-\rho n^{2(\al-1)}\la_n^+)+c_n^-e^{\la_n^-(T-t)}(-n^2-\rho n^{2(\al-1)}\la_n^-)\big ).\nonumber \\
&&\label{D2}
\end{eqnarray} 
Thus, given $f\in L^2(0,T),$ we define $u$ to be a solution to our system if \rq{D2} holds for all $(w^0,w^1)\in X^2\times X^0$.

\begin{prop}
Let $\al < 1$, $\rho <2$. For $(w^0,w^1)\in X^2\times X^0$, we have
$$
\int_0^T|\sum_n\f_n'\big (\pi)(c_n^+e^{\la_n^+t}(n^2+\rho n^{2(\al-1)}\la_n^+)+c_n^-e^{\la_n^-t}(n^2+\rho n^{2(\al-1)}\la_n^-)\big )|^2dt\geq C(\| w^0\|_{X^2}^2+\|w^1\|^2_{X^0}),
$$
for some $C$ that depends only on $\rho, \al.$
\end{prop}
Proof:
Setting 
$$
z_n^{\pm}=(n^2+\rho n^{2(\al-1)}\la_n^{\pm})\f_n'(\pi),
$$
an algebra exercise shows that $z_n^{\pm}\neq 0$, and hence
we have $|z_n^{\pm}|\asymp n^3$. Since we are assuming $\al \in (0,1)$,  by Theorem \cite{AIS}, where we choose
$T_0=T$ and $\delta=T/2,$
we have 
\begin{eqnarray}
\int_0^T
|\sum_nz_n^+c_n^+e^{\la_n^+t}+z_n^-c_n^-e^{\la_n^-t}|^2dt
& \geq & \frac{1}{C(\delta,T)}\sum_n |n^3c_n^+e^{\la_n^+\delta}|^2
+|n^3c_n^-e^{\la_n^-\delta}|^2\nonumber\\
& \geq & \frac{C}{C(\delta,T)}\sum_n n^3e^{-\rho n^{2\al}\delta}(|c_n^+|^2
+|c_n^-|^2)\nonumber\\
& \geq & \frac{C}{C(\delta,T)}\sum_n n^3e^{-\rho n^{2\al}T/2}(|c_n^+|^2
+|c_n^-|^2),\label{o4}
\end{eqnarray}
with $C$ depending on $\al, \rho.$
The proposition now follows from \rq{ob1}, \rq{ob4}, and the inequality
$$
n^3e^{-\rho n^{2\al}T/2}\geq C_2 n^4e^{-\rho n^{2\al}T},
$$
with $C_2$ some positive constant.

{\bf Temporary Statement}
\begin{thm}\label{thmdc}
Let $T>0.$ 
Let $\al \in [0,1),$ or $\al =1$ and $\rho <2.$ Assume \rq{bigass}.

Set $g(t)=0.$
Given $(u^0,u^1)\in X^0\times X^{-2}$, there exists $f\in L^2(0,T)$ such that the solution $u$ solve 
$$u_{tt}+\pa_x^4u+\rho(\Delta_f)^{\al}=0
$$
together 
\rq{bc1'}-\rq{init'} solves
$$u(x,T)=u_t(x,T)=0,
$$
and there exists a constant $C$ depending only on $\al,\rho$ such that
$$\| f\|_{L^2(0,T)}\leq Ce^{Q(T)}(\| u^0\|_{X^0}+\|u^1\|_{X^{-2}}).$$
Here $Q(T)\leq C'/T$, where the constant $C'$ depends on $\al, \rho$.

\end{thm}
\subsection{Alternative definition of fractional Laplacian}
In what follows, we will continue to denote by $A^{\al}$ the standard spectral Laplacian. 
An alternative extension of $A^\al$ to functions non-vanishing at the boundary was proposed by \cite{APR} for any $\al \leq 1$: (has $u_n$ been defined?)
\beq \label{altdef}
(-\Delta_D)^{\al}u(x):=\sum_n \left ( u_nn^{2\al}+[u(y) n^{2(\al -1)}\f_n'(y)]_0^\pi \right )\f_n(x).
\eeq 
The operator domain is then 
$Dom((-\Delta_D)^\al )=\{ u\in L^2(0,1): \sum |u_nn^{2\al}+[u(y) n^{2(\al -1)}\f_n'(y)]_0^\pi |^2<\infty \} .$
This definition provides an integration by parts formula comparable to ours. Suppose 
$v\in X^{2\al}$ and $u\in H^2(0,\pi)\subset Dom((-\Delta_D)^\al).$ Then 
$$
\int_0^{\pi}(-\Delta_D)^{\al}u(x)v(x)\ dx=\int_0^{\pi}u(x)A^{\al}v(x)\ dx+ [u(y)\sum_n n^{2(\al -1)}\int_0^{\pi}\f_n(x)v(x)dx\f_n'(y)]^\pi_0.
$$
We now consider the system our IBVP with \rq{beam'} replaced by 
\beq \label{beamf}
u_{tt}+\pa_x^4 u+\rho (-\Delta_{D})^\al u_t =  0.
\eeq
Then the adjoint problem remains the system \rq{dbeam'}-\rq{dinit'}. We now define the weak solution. Assume for the moment that $f\in C_0^2(0,T)$, and our solution $u$ is classical. Recall, 
\beq 
w(x,t)=\sum(c_n^+e^{\la_n^+(T-t)}+c_n^-e^{\la_n^-(T-t)})\f_n(x)=:\sum w_n(t)\f_n(x).
\label{wxt}
\eeq
Hence integration by parts gives the following, for $(w_0,w_1)\in X^2\times X^0$,
\begin{eqnarray}
0& =& \int_0^{\pi}[uw_t-\rho uA^{\al}w]_0^T dx-[\< w,u_t\>_{2}]^T_0dx\nonumber \\
& &+\int_0^Tw_{xxx}(\pi,t)f(t)dt-\rho \int_0^Tf'(t)\sum_n n^{2(\al -1)}\f_n'(\pi)w_n(t)dt\nonumber \\
& =& \int_0^{\pi}[w_tu-\rho uA^{\al}w]_0^T dx-[\< w,u_t\>_{2}]^T_0dx\nonumber \\
& &+\rho \int_0^Tf(t)\big (\sum_n\f_n' (\pi)(c_n^+e^{\la_n^+(T-t)}(-n^2-\rho n^{2(\al-1)}\la_n^+)+c_n^-e^{\la_n^-(T-t)}(-n^2-\rho n^{2(\al-1)}\la_n^-)\big ).\nonumber \\
&&\label{D2}
\end{eqnarray} 
Thus, given $f\in L^2(0,T),$ we define $u$ to be a solution to our system if \rq{D2} holds for all $(w_0,w_1)\in X^3\times X^1$.

\begin{prop}
Let $\al \leq 1$ and $\rho <2$. For $(w_0,w_1)\in X^3\times X^1$, we have
$$
\int_0^T|\sum_n\f_n'(\pi)(c_n^+e^{\la_n^+(T-t)}(-n^2-\rho n^{2(\al-1)}\la_n^+)+c_n^-e^{\la_n^-(T-t)}(-n^2-\rho n^{2(\al-1)}\la_n^-)\big )|^2dt$$
$$\geq C(\| w(*,0)\|_{X^3}^2+\|w_t(*,0)\|^2_{X^1}),
$$
for some $C$ that depends only on $\rho, \al.$
\end{prop}
Proof:
Set
$$
z_n^{\pm}=(-n^2-n^{2(\al-1)}\la_n^{\pm})\f_n'(\pi).
$$
Since $\rho <2$, $\Im \la_n^\pm \neq 0$, so 
 $z_n^{\pm}\neq 0$, and hence
we have $|z_n^{\pm}|\asymp n^3$. By Theorem \ref{win}, where we choose
$T_0=T$ and $\delta=T/4,$
we have 
\begin{eqnarray}
\int_0^T
|\sum_nz_n^+c_n^+e^{\la_n^+(T-t)}+z_n^-c_n^-e^{\la_n^-(T-t)}|^2dt
& \geq & \frac{1}{C(\delta,T)}\sum_n |n^3c_n^+e^{\la_n^+\delta}|^2
+|n^3c_n^-e^{\la_n^-\delta}|^2\nonumber\\
& \geq & \frac{C}{C(\delta,T)}\sum_n n^6e^{-2\rho n^{2\al}\delta}(|c_n^+|^2
+|c_n^-|^2)\nonumber\\
& \geq & \frac{C}{C(\delta,T)}\sum_n n^6e^{-\rho n^{2\al}T/2}(|c_n^+|^2
+|c_n^-|^2),\label{o4}
\end{eqnarray}
with $C$ depending on $\al, \rho.$
The proposition now follows from \rq{x3x1}, \rq{wxt}, and the inequality
$$
e^{-\rho n^{2\al}T/2}\geq  e^{-\rho n^{2\al}T}.
$$

\begin{thm}\label{thm1b}
Let $T>0.$ 
Let $\al \in [0,1]$ and $\rho <2.$
Set $g(t)=0.$
Given $(u_0,u_1)\in X^{-1}\times X^{-3}$, there exists $f\in L^2(0,T)$ such that the solution $u$ solve 
$$u_{tt}+\pa_x^4u+\rho(\Delta_f)^{\al}=0
$$
together 
\rq{bc1'}-\rq{init'} solves
$$u(x,T)=u_t(x,T)=0,
$$
and there exists a constant $C$ depending only on $\al,\rho$ such that
$$\| f\|_{L^2(0,T)}\leq Ce^{Q(T)}(\| u_0\|_{X^{-1}}+\|u_1\|_{X^{-3}}).$$
Here $Q(T)\leq C'/T$, where the constant $C'$ depends on $\al, \rho$.

\end{thm}

Talk about alt. def of $\Delta_N$.
$$
(-\Delta_N)^s \phi (x) =\sum_{n=2}^\infty \left (\mu_n^s \int_0^\pi \phi (y)\f_n(y)dy-\mu_n^{s-1}[u'(y)\f_n(y)]^\pi_0\right )\f_n(x)-\frac{1}{\pi}[u'(y)]^\pi_0.
$$
Then 
$$
\int_0^\pi (-\Delta_N)^su(y)v(y)dy=\int_0^\pi u(y)(-\Delta_N)^sv(y)dy-[u'(y)w(y)]^\pi_0,
$$
with $w$ defined by $(-\Delta_{N,0})^{1-\al}w=v$, $w(0)=w(\pi)=0.$